\newcommand{\NN}{\mathbb{N}}
\newcommand{\RR}{\mathbb{R}}
\newcommand{\CC}{\mathbb{C}}
\newcommand{\HH}{\mathbb{H}}
\newcommand{\OO}{\mathbb{O}}
\newcommand{\ee}{\mathsf{i}}
\newcommand{\calE}{\mathscr{E}}
\newcommand{\RE}{\mathsf{Re}}
\newcommand{\IM}{\mathsf{Im}}
\newcommand{\Syz}{\mathsf{Syz}}
\newtheorem{teo}{Theorem}[section]
\newtheorem{pro}[teo]{Proposition}
\newtheorem{cor}[teo]{Corollary}
\newtheorem{rem}[teo]{Remark}
\newtheorem{lem}[teo]{Lemma}
\newtheorem*{teo*}{Theorem}
\newtheorem*{exa*}{Example}
\title[Extension and tangential CRF conditions in quaternionic analysis]{Extension and tangential CRF conditions in quaternionic analysis}
\author[M.~Maggesi]{Marco Maggesi\textsuperscript{1}}
\address[\textsuperscript{1}]{Dipartimento di Matematica e Informatica 'Ulisse Dini'
Viale Morgagni, 67/a
50134 FIRENZE}
\email{marco.maggesi@unifi.it}
\author[D.~Pertici]{Donato Pertici\textsuperscript{2}}
\address[\textsuperscript{2}]{Dipartimento di Matematica e Informatica 'Ulisse Dini'
Viale Morgagni, 67/a
50134 FIRENZE}
\email{donato.pertici@unifi.it}
\author[G.~Tomassini]{Giuseppe Tomassini\textsuperscript{3}}
\address[\textsuperscript{3}]{Scuola Normale Superiore, Piazza dei Cavalieri, 7 - I-56126 Pisa, Italy}
\email{giuseppe.tomassini@sns.it}
\keywords{Quaternionic analysis \and
          Cauchy-Riemann-Fueter operator \and
          H-holomorphic functions \and
          Nonhomogeneous Cauchy-Riemann-Fueter system}
\thanks{The authors received support from GNSAGA-INdAM and Italian MIUR.}
\subjclass{30G35}
\date{\today}
\begin{document}

\begin{abstract}
We prove some extension theorems for quaternionic holomorphic
functions in the sense of Fueter.
Starting from the existence theorem for the nonhomogeneous
Cauchy-Riemann-Fueter Problem, we prove that
an $\HH$-valued function $f$ on a smooth hypersurface,
satisfying suitable tangential conditions, is locally a
jump of two $\HH$-holomorphic functions.
From this, we obtain, in particular,
the existence of the solution for the Dirichlet Problem
with smooth data.
We extend these results to the continous case.
In the final part, we discuss the octonian case.
\end{abstract}

\maketitle
\tableofcontents

\section*{Introduction}


This paper aims to set forth the methods of complex analysis in
the quaternionic analysis in several variables.
The main objects of such a theory are the $\HH$-\emph{holomorphic} functions,
i.e., those functions $f=f(q_1,\ldots,q_n)$, $q_1,\ldots,q_n\in\HH$,
which are (left) regular in the sense of Fueter with respect to each variable.
For the basic results in the quaternionic analysis in one and several variables,
we refer to the articles by Sudbery~\cite{S} and~\cite{Pe1} respectively.
As for a more geometric aspect of the theory, we refer to the book~\cite{IMV}
and the rich bibliography quoted there. 

Coming to the content of the paper,
we are dealing with the boundary values and extension problems for $\HH$-holomorphic functions.
As it is well known, this is one of the central themes in complex analysis,
which motivated the study of overdetermined systems of linear partial differential equations,
the CR geometry, and the theory of extension of ``holomorphic objects''.

For the sake of simplicity, we restrict ourselves to the
case $n=2$, even if most of the main results proved in the paper
hold in any dimension.

The paper is organized into three sections.

In Section~\ref{Ge}, after fixing the main notations,
we define the differential forms ${\rm d}q_\alpha$, ${\rm D}{q_\alpha}$
that play a fundamental role, and the Cauchy-Riemann-Fueter operator
$\overline{\mathfrak D}$.
As an application of the Cauchy-Fueter formula in one variable
\cite{Fu1,S},
we prove a result of ``Carleman type'' (Proposition~\ref{Car}).
We also recall the Bochner-Martinelli formula proved in~\cite{Pe1},
and we show that the Bochner-Martinelli kernel ${\bf K}^{BM}(q,q_0)$ writes
as a sum ${\bf K}^{BM}_1(q,q_0)+{\bf K}^{BM}_2(q,q_0)\sf j$,
where ${\bf K}^{BM}_1(q,q_0)$ and
${\bf K}^{BM}_2(q,q_0)$ are complex differential forms and the latter
is exact on $\{q\neq q_0\}$, see~\eqref{KBM2}.

The Section ends with a brief overview of the main results on $\HH$-holomor\-phy,
$\HH$-convexity~\cite{Pe3}, and the
$\overline{\mathfrak D}$-problem~\cite{ABLSS,AL,CSSS,BDS}.

Sections~\ref{TEXI} and~\ref{F1} are the bulk of the paper.
In the first part of Sections~\ref{TEXI},
using the differential forms ${\rm d}q_\alpha$, ${\rm D}{q_\alpha}$,
we formulate the CRF condition on a smooth hypersurface $S$ in terms of
the tangential operators ${{\rm D}q_1}|_S\wedge{\rm d}_{(q_1)}f$,
${{\rm D}q_2}|_S\wedge{\rm d}_{(q_2)}f$ (Theorem~\ref{lu56}).
This allows us to give the notion of \emph{admissible function} $f:S\to\HH$,
which is satisfied by the traces or, more generally, the ``jumps'' of
$\HH$-holomorphic functions, as done by the second author in~\cite{Pe3}.
Admissibility is a second-order condition, so, unlike the complex case, the
traces or, more generally, the jumps of $\HH$-holomorphic functions
satisfy first and second-order equations.
This is not surprising since these problems are related to
local solvability of the Cauchy-Riemann-Fueter Problem $\overline{\mathfrak D}u=g$
and this requires a second-order differential condition for $g$.
The main results of Section~\ref{TEXI} are Theorem~\ref{RH},
and Theorem~\ref{appl1} reported below.

Let $\Omega\subset \HH^2$ be a domain.
A \emph{domain splitting} $(S,U^+,U^-)$ of $\Omega$ is given by a smooth (nonempty)
hypersurface $S$ closed in $\Omega$ and two open disjoint nonempty sets $U^+$, $U^-$,
such that $\Omega\setminus S = U^+ \cup U^-$, where
both $U^+$ and $U^-$ have boundary $S$ in $\Omega$.

We say that a continuous (smooth) function $f:S\to\HH$ is a
\emph{continuous (smooth) jump} relative to a domain
splitting $(S,U^+,U^-)$ of $\Omega$, if there exist two $\HH$-holomorphic
functions $F^+$, $F^-$, on $U^+$, $U^-$ respectively, such that
$F^+$, $F^-$ are continuous (smooth) up to $S$ and $f=F^+|_S-F^-|_S$.

\begin{teo*}
  Let $\Omega\subset\HH^2$ be a convex domain and $(S,U^+,U^-)$ a domain splitting of $\Omega$.
  Let $f:S\to \HH$ a smooth admissible function.
  Then, $f$ is a smooth jump.
\end{teo*}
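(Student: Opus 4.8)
The plan is to reduce the construction of the jump to the solvability of a single nonhomogeneous Cauchy-Riemann-Fueter equation $\overline{\mathfrak D}u=g$ on $\Omega$, using convexity so that the existence theorem applies globally. First I extend $f$ to a smooth $\HH$-valued function $\tilde f$ on $\Omega$. The restriction $\overline{\mathfrak D}\tilde f|_S$ splits into a tangential part, which is intrinsic to $f$ and is governed by the operators ${\rm D}q_1|_S\wedge{\rm d}_{(q_1)}f$, ${\rm D}q_2|_S\wedge{\rm d}_{(q_2)}f$ of Theorem~\ref{lu56}, and a transversal part, which depends on the normal derivative of the extension and is therefore at my disposal. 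The first-order tangential CRF condition contained in admissibility makes the tangential part vanish, and by adjusting the $1$-jet of $\tilde f$ along $S$ I can also annihilate the transversal part; thus I may assume $\overline{\mathfrak D}\tilde f|_S=0$.

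Having arranged this, I set
\[
  g=\begin{cases}\overline{\mathfrak D}\tilde f & \text{on } U^+,\\ 0 & \text{on } U^-,\end{cases}
\]
which is then continuous across $S$. The decisive step is to verify that $g$ meets the second-order compatibility (syzygy) condition demanded by the existence theorem. On $U^-$ this is trivial, and on $U^+$ it holds automatically because there $g=\overline{\mathfrak D}\tilde f$ is already in the range of $\overline{\mathfrak D}$; the only thing that could go wrong is a singular contribution of the compatibility operator supported on $S$. Ruling this out is precisely where the \emph{second-order} content of admissibility is spent: it controls the normal jet of $\overline{\mathfrak D}\tilde f$ along $S$ and guarantees that the two one-sided data patch into a genuine admissible right-hand side. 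This is exactly the phenomenon anticipated in the introduction, namely that solvability of $\overline{\mathfrak D}u=g$ forces a second-order condition on $g$, and hence admissibility of $f$ must be a second-order condition.

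With $g$ compatible, convexity of $\Omega$ lets me invoke the existence theorem to get a solution $u$ of $\overline{\mathfrak D}u=g$ on $\Omega$ that is smooth up to $S$ from each side (and continuous across $S$). I then define $F^+:=\tilde f-u$ on $U^+$ and $F^-:=-u$ on $U^-$. By construction $\overline{\mathfrak D}F^+=\overline{\mathfrak D}\tilde f-g=0$ on $U^+$ and $\overline{\mathfrak D}F^-=-g=0$ on $U^-$, so both functions are $\HH$-holomorphic; boundary regularity of $u$ makes them smooth up to $S$. Finally, since $u$ is continuous across $S$, $F^+|_S-F^-|_S=(\tilde f-u)|_S+u|_S=\tilde f|_S=f$, so $f$ is a smooth jump.

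The main obstacle I expect is the middle step: simultaneously choosing the extension $\tilde f$ so that $\overline{\mathfrak D}\tilde f|_S=0$ and proving that the resulting extension-by-zero datum $g$ carries no singular part in the second-order compatibility condition across $S$. Showing that the full admissibility hypothesis---first order to kill $\overline{\mathfrak D}\tilde f|_S$, second order to tame its normal jet---is exactly the information needed to make $g$ an admissible datum for the existence theorem is the technical heart of the proof; the remaining manipulations of $\overline{\mathfrak D}$ and the jump identity are formal.
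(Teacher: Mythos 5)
Your overall architecture is the same as the paper's: extend $f$, glue a right-hand side across $S$, solve $\overline{\mathfrak D}u=g$ on the convex domain $\Omega$ by the existence theorem of~\cite{ABLSS}, and take differences to produce $F^\pm$. But there is a genuine gap at exactly the point you flag as the ``technical heart,'' and it is not a detail that admissibility fixes for free. The existence theorem you invoke requires $g\in{\rm C}^\infty(\Omega,\HH^2)$ together with $\overline P(g)=0$; it does not apply to a datum that is merely continuous, and it does not produce solutions ``smooth up to $S$ from each side'' of a non-smooth datum. Your extension $\tilde f$ only satisfies $\overline{\mathfrak D}\tilde f|_S=0$ (this is the CRF condition, Remark~\ref{rmk:smooth-extension}), i.e.\ first-order vanishing, so the extension-by-zero $g$ is continuous but in general not even ${\rm C}^1$ across $S$. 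Even if you use the full strength of admissibility, Proposition~\ref{ag11} only gives an extension with $\overline{\mathfrak D}F=\rho^2u$, i.e.\ second-order vanishing, so the glued datum is ${\rm C}^1$ but still not smooth, and $\overline P(g)$ cannot be controlled classically. Asserting that admissibility ``tames the normal jet'' so that no singular part appears is precisely the statement that needs proof, and it is false at the level of regularity you have arranged: admissibility by itself bounds only the $2$-jet of $\overline{\mathfrak D}\tilde f$ along $S$, not the full jet.

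What the paper actually does to close this gap is the content of Lemma~\ref{ag18} and Propositions~\ref{ag19} and~\ref{ag24}: starting from the $\rho^2$-vanishing of Proposition~\ref{ag11}, one repeatedly differentiates the compatibility identities~\eqref{cr2} satisfied by $\overline{\mathfrak D}G$ on each side to gain one order of vanishing at a time (the induction in Proposition~\ref{ag19}), and then uses a Borel/Whitney-type extension (Lemma~\ref{ag18}) to sum these corrections into a single smooth extension $G$ of $f$ with $\overline{\mathfrak D}G$ \emph{flat} (vanishing to infinite order) on $S$. Only then is the sign-flipped datum
\[
\eta=
\begin{cases}
-\overline{\mathfrak D}G & \text{on } U^+,\\
\phantom{-}0 & \text{on } S,\\
\phantom{-}\overline{\mathfrak D}G & \text{on } U^-
\end{cases}
\]
genuinely smooth on $\Omega$, the conditions $\overline P(\eta)=0$ extend from $U^+\cup U^-$ to all of $\Omega$ by continuity, and the smooth existence theorem applies, giving $\psi$ with $\overline{\mathfrak D}\psi=\eta$ and $F^\pm=(\psi\pm G)/2$. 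So your plan is recoverable, but the missing ingredient --- bootstrapping admissibility from second-order to infinite-order vanishing of $\overline{\mathfrak D}$ of the extension --- is the actual mathematical core of the theorem, not a formality, and without it the invocation of the existence theorem is unjustified.
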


\begin{teo*}
  Let $\Sigma$ be an open half-space and $S\subset \HH^2$ a
  connected closed smooth hypersurface of $\Sigma$.
  Assume that $\Sigma\setminus S$ splits into two connected
  components $D$ and $W$, with $D$ bounded.
  Let $f:S\cap \Sigma\to\HH$ be a smooth admissible function.
  Then, $f$ extends to $D$ by an $\HH$-holomorphic
  function, which is smooth up to $S$.
\end{teo*}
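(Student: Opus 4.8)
My plan is to obtain the extension statement from the jump theorem (Theorem~\ref{RH}) by showing that the ``exterior'' holomorphic piece can be absorbed into the ``interior'' one. First, since $\Sigma$ is an open half-space of $\HH^2\cong\RR^8$, it is convex, and the triple $(S,D,W)$ is a domain splitting of $\Sigma$: the hypersurface $S$ is smooth and closed in $\Sigma$, while $D$ and $W$ are the two disjoint nonempty open components of $\Sigma\setminus S$, each with boundary $S$. As $f$ is smooth and admissible, Theorem~\ref{RH} applies and yields $\HH$-holomorphic functions $F^+$ on $D$ and $F^-$ on $W$, smooth up to $S$, with $f=F^+|_S-F^-|_S$.

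The key reduction is that it suffices to extend $F^-$ from the unbounded component $W$ to a single $\HH$-holomorphic function $\widetilde F$ on all of $\Sigma$. Indeed, granting such a $\widetilde F$, the function $G:=F^+-\widetilde F|_D$ is $\HH$-holomorphic on $D$ and smooth up to $S$, and since $\widetilde F|_S=F^-|_S$ by continuity, we get $G|_S=F^+|_S-F^-|_S=f$; thus $G$ is the desired $\HH$-holomorphic extension of $f$ to $D$, smooth up to $S$. In particular one never needs $F^-$ itself to vanish, only to extend across $S$. So the entire content of the theorem is concentrated in filling the bounded hole $D$, and this is exactly where the boundedness of $D$, hence the compactness of $\overline D$ and the fact that we work in $\HH^2$ (i.e.\ $n=2\ge 2$ variables), must be used.

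To carry out this filling I would run a Hartogs--Bochner argument adapted to the Fueter setting, using the Bochner--Martinelli formula recalled in Section~\ref{Ge}. Writing the kernel as $\mathbf K^{BM}=\mathbf K^{BM}_1+\mathbf K^{BM}_2\,\mathsf j$ and exploiting that $\mathbf K^{BM}_2$ is exact on $\{q\neq q_0\}$, see~\eqref{KBM2}, one represents $F^-$ by an integral over a hypersurface enclosing $\overline D$ and checks that, because $n=2\ge 2$, the resulting integral defines an $\HH$-holomorphic function on the whole convex region, so that $F^-$ extends across $S$ to the sought $\widetilde F$. The main obstacle I anticipate is geometric rather than computational: since $S$ is only closed in $\Sigma$, the closure $\overline D$ may abut the flat boundary $\partial\Sigma$, so the hole need not be compactly contained in $\Sigma$ and a naive Kugelsatz does not apply directly. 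I would handle this by exhausting $\Sigma$ from inside by convex subdomains and passing to the limit, using the decay of $\mathbf K^{BM}$ and the flatness of the affine wall $\partial\Sigma$ to control the boundary contributions; ensuring that the extension survives this limiting process, uniformly up to $S$, is the delicate point on which the proof really turns.
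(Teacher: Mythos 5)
Your reduction step is sound and coincides with the paper's own first move: apply Theorem~\ref{RH} (the paper does this inside a ball $B$ with $D\Subset B$, you do it on the convex half-space $\Sigma$ itself, which is equally legitimate), then observe that it suffices to extend the holomorphic piece living on the unbounded side across the bounded hole $D$, since subtracting that extension from the other piece gives the desired boundary values. The problem is what comes next: that extension across $D$ is the entire content of the theorem, and your proposal does not prove it. You correctly identify the obstruction --- $S$ is only closed in $\Sigma$, so $\overline D$ may cling to the wall $\partial\Sigma$, and no compact hypersurface of $\Sigma$ can enclose $D$ (any compact $T\subset\Sigma$ lies in $\{y_3\ge\delta\}$ for some $\delta>0$, and the slab $\{0<y_3<\delta\}$, which meets $D$, then lies in the unbounded component of the complement of $T$) --- but the remedy you sketch does not remove it. Exhausting $\Sigma$ by convex subdomains such as $\{y_3>\delta\}$ reproduces the same configuration at every level: the truncated hole still touches the boundary of each subdomain, so no Kugelsatz/Hartogs argument based on compactly supported solvability of $\overline{\mathfrak D}u=g$ applies at any stage, and neither the decay of ${\bf K}^{BM}$ nor the flatness of $\partial\Sigma$ supplies a mechanism for crossing $S$. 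Note also that a Bochner--Martinelli representation of $F^-$ over an unbounded cycle would require decay of $F^-$ at infinity, which you do not have.

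What the paper does at precisely this point is a continuity (Kontinuit\"atssatz-type) sweeping argument, and some such device seems unavoidable. After localizing to a ball $B$ with $D\Subset B$, it considers, for small $\varepsilon>0$, the spheres $S_c$ centered at $(0,-c{\sf k})$ passing through ${\rm b}\,B\cap\{y_3=\varepsilon\}$, and takes the supremum $c_0$ of those $c$ for which the outer function extends past $S_c\cap\overline B$. If $c_0$ were finite, the second fundamental form of $S_{c_0}\cap\Sigma$, viewed as part of the boundary of $B\setminus\overline B_{c_0}$, is negative definite, and the argument of Theorem~4 of~\cite{Pe3} extends the function across every point of $S_{c_0}\cap\overline B$ --- a contradiction; hence $c_0=+\infty$, the function extends to $B\cap\{y_3>\varepsilon\}$ for every small $\varepsilon$, and the limit $\varepsilon\to 0^+$ is handled not by estimating boundary integrals but by the analytic continuation principle (Theorem~\ref{sept4a}). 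Until your proposal contains an argument of this kind --- a concrete mechanism pushing the extension across $S$ all the way down to the wall --- it has a genuine gap at its central step.
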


In Section~\ref{F1}, we extend the
previous results when the function $f$ is admissible in a weak sense.

Finally, in the Appendix,  we provide the characteristic conditions
for the local solvability of the Cauchy-Riemann-Fueter
Problem $\overline{\mathfrak{D}}u=g$ in the case of $n=2$ octonian variables.
This, allows us to generalize some of our constructions and results to the
octonian case.

\section{Generalities}\label{Ge}

In this section, we summarize some of the main notions and results
contained in the seminal papers~\cite{Pe1, Pe2, Pe3}.

\subsection{Fueter operators and \texorpdfstring{$\HH$}{H}-holomorphic functions}

We fix some notations. Let $\HH$ be the quaternion algebra over $\RR$.
For a generic $q\in\HH$ we write
$$
q=\sum_{\alpha=0}^3x_\alpha{\sf i}_\alpha,\qquad
\overline q=x_0-\sum_{\alpha=1}^3x_\alpha{\sf i}_\alpha
$$
$x_\alpha\in\RR$, where ${\sf i_0}=1$, ${\sf i_1}={\sf i}$,
${\sf i_2}={\sf j}$, ${\sf i_3}={\sf k}$.

We also define the following $\HH$-valued differential forms
\begin{equation}\label{giu}
{\rm d}q=\sum_{\alpha=0}^3{\sf i}_\alpha{\rm d}x_\alpha,\qquad
\overline{{\rm d}q}=\sum_{\alpha=0}^3\bar{\sf i}_\alpha{\rm d}x_\alpha
\end{equation}
and
\begin{equation}\label{giu0}
{\rm D}q=\sum_{\alpha=0}^3(-1)^{\alpha}{\sf i}_\alpha{\rm d}X_{\widehat{\alpha}},\qquad
\overline{{\rm D}q}=\sum_{\alpha=0}^3(-1)^{\alpha}\bar{\sf i}_\alpha{\rm d}X_{\widehat{\alpha}},
\end{equation}
where
${\rm d}X_{\widehat\alpha} =
{\rm d}x_0\wedge\cdots\wedge\widehat{{\rm d}x_\alpha}\wedge\cdots\wedge{\rm d}x_3$.

Let $F$ be a ${\rm C}^1$ $\HH$-valued function.
Following Fueter, we define the operators
\begin{equation}
  \label{giu123}
    \frac{\partial F}{\partial q}
    = \sum_{\alpha=0}^3\bar{\sf i}_\alpha\frac{\partial F}{\partial x_\alpha}, \qquad
    \frac{\partial F}{\partial \overline q}
    = \sum_{\alpha=0}^3{\sf i}_\alpha\frac{\partial F}{\partial x_\alpha}.
\end{equation}
We have
\begin{equation}\label{laplaciano}
\Delta F =
\frac{\partial}{\partial q}
\frac{\partial}{\partial \bar q} F
= \frac{\partial}{\partial \bar q}
\frac{\partial}{\partial q} F,
\end{equation}
\begin{equation}\label{lu1}
{\rm d}\big({\rm D}q\cdot F\big)=\frac{\partial F}{\partial \overline q}{\rm d}x,
\end{equation}
where ${\rm d}x={\rm d}x_0\wedge{\rm d}x_1\wedge{\rm d}x_2\wedge{\rm d}x_3$.

The function $F$ is said to be (\emph{left}) \emph{$\HH$-holomorphic} if
$$
\frac{\partial F}{\partial \overline q}=0.
$$
The function $F$ is said to be (\emph{left}) \emph{$\HH$-antiholomorphic}  if
$$
\frac{\partial F}{\partial q}=0.
$$
Right $\HH$-holomorphic and $\HH$-antiholomorphic functions are defined
interchanging in~\eqref{giu123} $\partial F/\partial x_\alpha$
with ${\sf i}_\alpha$ and $\overline{\sf i}_\alpha$ respectively.
For the corresponding derivative, we adopt the notation
$$
\frac{F\partial }{\partial \overline q},\qquad
\frac{F\partial }{\partial q}.
$$

For every $q_0\in\HH$, the function
$$
G(q-q_0)=\frac{\overline q-\overline q_0}{\vert q-q_0\vert^4}
$$
is left and right $\HH$-holomorphic.

The function $G(q-q_0)$ is the Cauchy-Fueter kernel and is the main ingredient to prove the basic \emph{Cauchy-Fueter formula}
$$
F(q_0)=\frac{1}{2\pi^2}\int_{q\in{\rm b\Omega}}G(q-q_o)DqF(q),
$$
where $\Omega$ is a bounded domain in $\HH$ with ${\rm b}\,\Omega$ sufficiently smooth, $q_0\in\Omega$,
and $F:\overline\Omega\to\HH$ a ${\rm C}^1$ function which is $\HH$-holomorphic in $\Omega$ and continuous on $\overline \Omega$.

From this formula and~\ref{laplaciano}, one checks immediately that left,
right $\HH$-holomorphic and $\HH$-antiholo\-morphic functions are harmonic.

For other general results in one quaternionic variable we refer to~\cite{S}.
Here we just want to mention the following ``Carleman type'' result:
\begin{pro}\label{Car}
Let $\Omega$ be a domain in the ball $B(r)=\{q\in\HH:\vert q\vert<r\}$ such
that $0\notin\overline\Omega$ and ${\rm b}\,\Omega=\Gamma\cup\Sigma$, with $\Gamma\subset B(r)$
and $\Sigma\subset{\rm b}B(r)$.
Let $F$ be an $\HH$-holomorphic function on a neighborhood of $\overline\Omega$.
Then, $F_{\vert\Omega}$ depends only on $F_{\vert\Gamma}$.
\end{pro}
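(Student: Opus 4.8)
The statement is one of uniqueness: writing $F\mapsto F|_\Gamma$ for the trace map, it asserts that two functions that are $\HH$-holomorphic on a neighbourhood of $\overline\Omega$ and agree on $\Gamma$ must agree on $\Omega$. By linearity it is enough to prove that if $F$ is $\HH$-holomorphic on a neighbourhood of $\overline\Omega$ and $F|_\Gamma=0$, then $F\equiv0$ on $\Omega$. I would use two features of $\HH$-holomorphic functions: first, by the Cauchy-Fueter formula together with~\eqref{laplaciano}, such an $F$ is harmonic, hence real-analytic on its domain; second, $\overline{\mathfrak D}$ is a first-order \emph{elliptic} operator, for which the smooth real hypersurface $\Gamma$ is non-characteristic. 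Recall also that $\Omega$, being a domain, is connected.

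The key step is to upgrade the single condition $F|_\Gamma=0$ to vanishing Cauchy data along $\Gamma$. Let $\nu=\sum_{\alpha=0}^3\nu_\alpha\mathsf i_\alpha$ be the unit normal of $\Gamma$, viewed as a nonzero (hence invertible) quaternion, and split each $\partial_{x_\alpha}$ into its normal part $\nu_\alpha\partial_\nu$ and a part tangent to $\Gamma$. Since $F$ vanishes identically on $\Gamma$, every derivative of $F$ tangent to $\Gamma$ vanishes on $\Gamma$; substituting this into $\partial F/\partial\overline q=\sum_\alpha\mathsf i_\alpha\partial_{x_\alpha}F=0$ leaves, on $\Gamma$, only the normal contribution $\nu\,\partial_\nu F=0$, and invertibility of $\nu$ gives $\partial_\nu F=0$ on $\Gamma$. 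Thus the harmonic function $F$ has $F=\partial_\nu F=0$ on $\Gamma$, and Holmgren's uniqueness theorem (equivalently, unique continuation across a non-characteristic hypersurface for the analytic-coefficient operator $\Delta$) yields $F\equiv0$ on a one-sided neighbourhood of $\Gamma$ in $\Omega$. Since $F$ is real-analytic and $\Omega$ is connected, vanishing on this open subset forces $F\equiv0$ throughout $\Omega$, which is the assertion.

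I expect the middle step to be the main obstacle: one must show that $F|_\Gamma=0$ \emph{alone} controls the normal derivative, and this is precisely where the first-order system $\overline{\mathfrak D}F=0$ — not merely harmonicity — is essential, since for a general harmonic function the trace on $\Gamma$ does not determine the function. I note that the hypotheses $0\notin\overline\Omega$ and $\Sigma\subset\mathrm bB(r)$ are not needed for this uniqueness argument; they fix the concrete geometric setting in which the result is later applied. One might alternatively hope for a genuine reconstruction (Carleman) formula by adding to the Cauchy-Fueter integral the vanishing integral associated with the reflected pole $q_0^\ast=r^2q_0/|q_0|^2\notin\overline\Omega$, using $|q-q_0^\ast|=\tfrac{r}{|q_0|}|q-q_0|$ and $Dq|_\Sigma=\nu\,dS=\tfrac{q}{r}\,dS$ on $\Sigma$; but no constant quaternionic factor makes the two kernels cancel on $\Sigma$ (the $\overline q$ and $\overline{q_0}$ terms scale by the incompatible ratios $r^4/|q_0|^4$ and $r^2/|q_0|^2$), so this route does not by itself remove the $\Sigma$-integral, and the unique-continuation argument above is the clean path.
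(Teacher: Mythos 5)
Your proof is correct, but it takes a genuinely different route from the paper's. The paper argues constructively: starting from the Cauchy--Fueter representation of $F(q)$ over ${\rm b}\,\Omega=\Gamma\cup\Sigma$, it expands the kernel on $\Sigma$ by Sudbery's series $G(p-q)=\sum_{m,\nu}P_\nu(q)G_\nu(p)$ (valid there since $\lvert q\rvert<\lvert p\rvert=r$), and uses the hypothesis $0\notin\overline\Omega$ to conclude that each $G_\nu$ is $\HH$-holomorphic on a neighborhood of $\overline\Omega$, so that $\int_{{\rm b}\,\Omega}G_\nu(p)\,Dp\,F(p)=0$ by the Cauchy--Fueter theorem; this converts the $\Sigma$-integral into a series of $\Gamma$-integrals and produces an explicit Carleman formula reconstructing $F|_\Omega$ from $F|_\Gamma$ alone. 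You instead reduce the statement to uniqueness and argue non-constructively: your key step --- that $F|_\Gamma=0$ forces $\partial_\nu F=0$ on $\Gamma$, because $\partial F/\partial\overline q=0$ expresses $\nu\,\partial_\nu F$ as a sum of tangential derivatives and $\nu$ is an invertible quaternion --- is sound (it is essentially the one-variable instance of the tangential analysis behind Theorem~\ref{lu56}), and vanishing Cauchy data for the harmonic function $F$ then gives $F\equiv 0$ near $\Gamma$ by Holmgren/unique continuation, hence on all of $\Omega$ by real-analyticity and connectedness. What each approach buys: yours is more elementary (no kernel expansion) and applies to any domain whose boundary contains a nonempty smooth open piece $\Gamma$, while the paper's proof yields an explicit reconstruction formula, which is the real content of a ``Carleman type'' result and the reason the specific geometry ($\Sigma\subset{\rm b}\,B(r)$, $0\notin\overline\Omega$) enters. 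One small correction to your closing remark: the hypothesis $0\notin\overline\Omega$ is not entirely superfluous even in your approach, since it is exactly what guarantees $\Gamma\neq\emptyset$ --- the segment in $B(r)$ joining a point of $\Omega$ to $0$ must cross ${\rm b}\,\Omega$ inside $B(r)$, i.e.\ in $\Gamma$; if one drops it, the case $\Omega=B(r)$, $\Gamma=\emptyset$ shows the uniqueness statement fails, so your argument still uses this hypothesis, only in a much weaker way than the paper does.
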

\begin{proof}
Let $q\in\Omega$. By Cauchy-Fueter formula,
\[
  \begin{split}
    F(q)
    &= \frac{1}{2\pi^2}\int_{p\in{\rm b}\,\Omega}G(p-q)DpF(p) \\
    &= \frac{1}{2\pi^2}\int_{p\in\Gamma}G(p-q)DpF(p)+
       \frac{1}{2\pi^2}\int_{p\in\Sigma}G(p-q)DpF(p).
  \end{split}
\]
If $p\in\Sigma$, then $\vert q\vert<\vert p\vert$ and
$$
G(p-q)=\sum_{m=0}^{+\infty}\sum\limits_{\nu\in\sigma_m}P_\nu(q)G_\nu(p),
$$
where $\sigma_m=\{(m_1,m_2,m_3)\in\NN^3:m_1+m_2+m_3=m\}$,
the $P_\nu$ are $\HH$-holomorphic polynomials, the functions
$G_\nu(p)$ are $\HH$-holomorphic in $\HH\setminus\{0\}$, and
the series is totally convergent with respect to $p\in\Sigma$
(see \cite[Proposition~10]{S}).

Since $0\notin\overline\Omega$, by the Cauchy-Fueter theorem
(see~\cite[1.~Hauptsatz]{Fu1}) we have
$$
\int_{p\in{\rm b}\,\Omega}G_\nu(p)DpF(p)=
\int_{p\in\Gamma} G_\nu(p)DpF(p)+\int_{p\in\Sigma}G_\nu(p)DpF(p)=0,
$$
for all $\nu$. It follows that
\[
\begin{split}
\int_{p\in\Sigma}G(p-q)DpF(p) &=
\phantom{-}\sum_{m=0}^{+\infty}\sum\limits_{\nu\in\sigma_m}P_\nu(q)
\int_{p\in\Sigma}G_\nu(p)DpF(p)\\
&=-\sum_{m=0}^{+\infty}\sum\limits_{\nu\in\sigma_m}P_\nu(q)\int_{p\in\Gamma}G_\nu(p)DpF(p),
\end{split}
\]
whence the Carleman formula
$$
F(q) =
  \frac{1}{2\pi^2}\int_{p\in\Gamma}G(p-q)DpF(p)
  - \frac{1}{2\pi^2}\sum_{m=0}^{+\infty}
  \sum\limits_{\nu\in\sigma_m}P_\nu(q)\int_{p\in\Gamma} G_\nu(p)DpF(p)
$$
proving the statement.
\end{proof}

\subsection{Several variables}

Fueter operators clearly extend to ($\HH$-valued) functions of
several quaternionic variables $q_1,q_2,\dots,q_n$.

For the sake of simplicity, from now on we assume $n=2$, even
if the most part of the results proved in the sequel hold for any $n$.

We denote $q=(q_1,q_2)$ the generic element of $\HH^2$ and we set
$$
q_1=\sum_{\alpha=0}^3x_\alpha{\sf i}_\alpha,\qquad q_2=\sum_{\alpha=0}^3y_\alpha{\sf i}_\alpha.
$$
The Cauchy-Riemann-Fueter operators $\overline{\mathfrak D}$
and $\mathfrak D$ are then defined, respectively, by
\begin{equation}\label{lug2}
F\longmapsto({\partial F}/{\partial \overline q_1},{\partial F}/{\partial \overline q_2}),\qquad
F\longmapsto({\partial F}/{\partial q_1},{\partial F}/{\partial q_2})
\end{equation}
and $F$ is said to be (\emph{left}) \emph{$\HH$-holomorphic} 
if it is ${\rm C}^1$ and $\overline{\mathfrak D}F=0$.

We have the identity
\begin{equation}
\label{luB}
\begin{split}
\frac{1}{2}\big(\overline{{\rm d}q}_1\wedge{\rm d}q_1\wedge{\rm d}y&\wedge{\rm d}F+{\rm d}x\wedge\overline{{\rm d}q}_2\wedge{\rm d}{q_2}\wedge{\rm d}F\big)=\\
&-\big(\overline{{\rm D}q}_1\frac{\partial F}{\partial \overline q_1}\wedge{\rm d}y+{\rm d}x\wedge\overline{{\rm D}q}_2\frac{\partial F}{\partial \overline q_2}\big)+\star{\rm d}F,
\end{split}
\end{equation}
where ${\rm d}x={\rm d}x_0\wedge\cdots\wedge{\rm d}x_3$,
${\rm d}y={\rm d}y_0\wedge\cdots\wedge{\rm d}y_3$, and
$\star$ is the Hodge operator.

In particular, by~\ref{luB}, we get that if $F$ is $\HH$-holomorphic,
\begin{equation}\label{lu6}
\frac{1}{2}\left(
\overline{{\rm d}q}_1\wedge{\rm d}q_1\wedge{\rm d}y\wedge{\rm d}F
+{\rm d}x\wedge\overline{{\rm d}q}_2\wedge{\rm d}{q_2}\wedge{\rm d}F\right) =
\star{\rm d}F.
\end{equation}
\begin{rem}\label{lu80}
Formula~\eqref{lu6} holds, more generally,
at those points where $\overline{\mathfrak D}F=0$.
\end{rem}

Let $\Delta_1$ ($\Delta_2$) denote the laplacian in the
coordinates $x_\alpha$ ($y_\alpha$), $\alpha=0, 1, 2 ,3$.
Then, if $F$ is $\HH$-holomorphic, $\Delta_1F=\Delta_2F =0$.
In particular, $F$ is harmonic.

A useful way to construct $\HH$-holomorphic functions in
one quaternionic variable is to start by
(complex) holomorphic functions $F=F(z)=u+{\sf i}v$ and
define~\cite[5.~Satz]{Fu1}
\begin{equation}\label{cr}
 F^\#=F^\#(q):=u({\sf Re}\, q, \vert{\sf Im}\,q\vert)+
\frac{{\sf Im}\,q}{\vert {\sf Im}\,q\vert}v({\sf Re}\,q,\vert{\sf Im}\,q\vert).
\end{equation}
In general, $F^\#$ is not $\HH$-holomorphic, not even harmonic,
but its laplacian $\Delta F^\#$ is.

\begin{exa*}\label{cr0}
Let $F(z)=z^n$. Then,
$$
F^\#(q)=(z^n)^\#=q^n.
$$
In particular, for the cases $n=3$ and $n=-1$, we get
$$
\Delta q^3
= -4(2q + \bar q),
$$
$$
\Delta \left( \left( \frac{1}{z}\right)^\#\right) =
- 4 \frac{\bar q}{\lvert q\rvert^4}
= -4G(q).
$$
\end{exa*}

\subsection{Bochner-Martinelli Kernel}
The \emph{Bochner-Martinelli Kernel}
 ${\bf K}^{BM}(q,q_0)$
was introduced in~\cite{Pe1}, where a representation formula
for $\HH$-holomorphic functions was proved:
\begin{equation}\label{BM}
F(q_0)=\int_{q\in {\rm b}\,\Omega}{\bf K}^{BM}(q,q_0)F(q).
\end{equation}
Here $q_0$ belongs to a bounded domain $\Omega$ in $\HH^n$ with
smooth boundary ${\rm b}\,\Omega$ and $F$ is $\HH$-holomorphic in $\Omega$
and continuous up to ${\rm b}\,\Omega$. We will use the notation
${\bf K}^{BM}(q,q_0)$ instead of the original one. 

Set $q_1=z_1+w_1\sf j$, $q_2=z_2+w_2\sf j$, $z_\alpha,w_\alpha\in\CC$,
$z=(z_1,z_2)$, $w=(w_1,w_2)$.

We use the notation
$$
{\bf K}^{BM}(q,q_0):
={\bf K}^{BM}(z,w,z^0,w^0),
$$
where $z^0=(z^0_1,z^0_2)$, $w^0=(w^0_1,w^0_2)$.

The $\HH$-valued differential form ${\bf K}^{BM}(q,q_0)$ is a real analytic of degree~$7$ and
\begin{equation}\label{43}
{\bf K}^{BM}(q,q_0)={\bf K}^{BM}_1(q,q_0)+{\bf K}^{BM}_2(q,q_0)\sf j,
\end{equation}
where ${\bf K}_1^{BM},{\bf K}_2^{BM}$ are real analytic complex-valued differential forms.

Observe that ${\bf K}^{BM}_1(z, w, z_0, w_0)$ is the Bochner-Martinelli kernel for
functions which are holomorphic with respect to $z_1,z_2$ and antiholomorphic with respect to $w_1,w_2$ and ${\bf K}^{BM}_2(z, w, z_0, w_0)$ is exact on $\HH^2\setminus\{(z^0,w^0)\}$:
\begin{equation}
  \label{KBM2}
  {\bf K}^{BM}_2(z, w, z_0, w_0)={\rm d}\omega_2
\end{equation}
where
\begin{equation}
  \label{46}
  \begin{aligned}
    \omega_2=(8\pi^4&)^{-1}\left\lvert (z,w)-(z^0,w^0)\right\rvert^{-6}\cdot\\
    &\big({\rm d}\overline z_1\wedge{\rm d}w_1\wedge{\rm d}\overline z_2\wedge{\rm d}z_2\wedge{\rm d}\overline w_2\wedge{\rm d}w_2+\nonumber\\
    &{\rm d}\overline z_1\wedge{\rm d}z_1\wedge{\rm d}\overline w_1\wedge{\rm d}w_1\wedge{\rm d}\overline z_2\wedge{\rm d}w_2\big)\nonumber.
  \end{aligned}
\end{equation}

\subsection{\texorpdfstring{$\HH$}{H}-holomorphy and \texorpdfstring{$\HH$}{H}-convexity}

\emph{$\HH$-holomorphy} and \emph{$\HH$-convexity} are defined
like in the complex case~\cite{Pe3}.
Kontinuitätssatz holds true~\cite[Theorem 2]{Pe3},
as well as the following implications~\cite[Proposition~6, Theorem~3]{Pe3}
\begin{itemize}
\item[1)] for a domain in $\CC^4\simeq\HH^2$, holomorphy implies $\HH$-holomorphy.
The converse is not true in general (e.g.~$\HH\setminus\{(0,0)\}$ is a domain
of $\HH$-holomorphy, but it is not a domain of holomorphy in $\CC^2\simeq\HH$);
\item[2)] $\HH$-holomorphy implies $\HH$-convexity;
\end{itemize}

\noindent
For domains $\Omega\subset \HH^n$, $n>1$, with smooth boundary ${\rm b}\,\Omega$,
a necessary condition for the $\HH$-holomorphy can be given by the $2^{nd}$
fundamental form $h$ of ${\rm b}\,\Omega$ with respect to the orientation of
${\rm b}\,\Omega$ determined by the inward unit normal vector.
Precisely~\cite[Theorem 4]{Pe3},
\begin{itemize}
\item[3)] given a point $q_0\in{\rm b}\,\Omega$, there is no right $\HH$-line
$\ell$ tangent to ${\rm b}\,\Omega$ at $q_0$ such that $h(q_0)|_\ell<0$.
\end{itemize}
In this case, we say that $\Omega$ (or its boundary) is \emph{Levi $\HH$-convex}.
For $n=2$,
we say that $\Omega$ is \emph{strongly Levi $\HH$-convex}, if for
all $q_0 \in {\rm b}\,\Omega$, we have $h(q_0)|_\ell > 0$, where $\ell$ is
the only right $\HH$-line tangent to ${\rm b}\,\Omega$ at $q_0$.

In general, we say that a smooth hypersurface $S\subset \HH^n$
is \emph{nondegenerate} if,
there exists a right $\HH$-line $\ell$ such that the form
$h(q_0)|_\ell$ has constant sign.

Two open problems:
\begin{itemize}
\item[i)] Is a domain $\HH$-convex a domain of $\HH$-holomorphy?
\item[ii)] Levi problem in $\HH^n$.
\end{itemize}

\subsection{\texorpdfstring{$\overline{\mathfrak D}$}{barD}-problem and Hartogs Theorem}\label{13ott}

Let $q=(q_1,q_2)\in\HH^2$ with
$$
q_1=\sum_{\alpha=0}^3x_\alpha i_\alpha,\qquad q_2=\sum_{\alpha=0}^3y_\alpha i_\alpha
$$
and consider the laplacians
$$
\Delta_1 = \frac{\partial^2}{\partial x_0^2}+\frac{\partial^2}{\partial x_1^2}+\frac{\partial^2}{\partial x_2^2}+\frac{\partial^2}{\partial x_3^2},\qquad
\Delta_2 = \frac{\partial^2}{\partial y_0^2}+\frac{\partial^2}{\partial y_1^2}+\frac{\partial^2}{\partial y_2^2}+\frac{\partial^2}{\partial y_3^2}.
$$
Then, since ${\partial}/{\partial \overline q_s}$ and $\Delta_h$ commute we have
 \begin{equation}\label{cr00}
\frac{\partial}{\partial \overline q_s}\frac{\partial}{\partial q_h}\frac{\partial}{\partial \overline q_h}
=\frac{\partial}{\partial \overline q_s}{\Delta}_h={\Delta}_h\frac{\partial}{\partial \overline q_s}.
\end{equation}
It follows that, if $u$ is a smooth (local) solution of the
{\rm CRF} \emph{system}
\begin{equation}
\overline{\mathfrak D} u=g,\quad g=(g_1,g_2),
\end{equation}
then
\begin{equation}\label{cr2}
{\Delta}_h g_s = \frac{\partial}{\partial \overline q_s}\frac{\partial g_h}{\partial q_h},
\end{equation}
which is a nontrivial condition for $h\neq s$.

For every pair $g=(g_1,g_2)$, we set
\begin{equation}\label{cr02}
  \begin{aligned}
    \overline P_1(g)&= \frac{\partial}{\partial \overline q_1}\frac{\partial g_2}{\partial q_2}-\Delta_2 g_1,\\
    \overline P_2(g)&= \frac{\partial}{\partial \overline q_2}\frac{\partial g_1}{\partial q_1}-\Delta_1 g_2
  \end{aligned}
\end{equation}
and denote $\overline P$ the operator $g=(g_1,g_2)\mapsto(\overline P_1(g),\overline P_2(g))$.
Then, if $g=\overline{\mathfrak D}u$ with $u$ smooth, we have
\begin{equation}\label{cr3}
\overline P(g)=0,
\end{equation}
i.e.,
\begin{equation}\label{cr4}
\overline P_1(g)=0,\qquad
\overline P_2(g)=0.\\
\end{equation}
Conditions~\eqref{cr2} for $h,s=1,\dots,n$ are still necessary in order to
solve $\overline{\mathfrak D} u=g$ for $g=(g_1,\dots,g_n)$.
If $g\in C_0^k$, $n, k\ge 2$, they are also sufficient and in such
situation $\overline{\mathfrak D} u=g$ has a $C_0^k$ solution $u$
(see \cite[Theorem 1]{Pe2}).
In particular, this implies Hartogs Theorem.
We point out that Hartogs Theorem was already proved by the second
author~\cite[Teorema 6]{Pe1}, by solving the equation $\overline{\mathfrak D} u=g$
with integral conditions on $g$, instead of~\eqref{cr2}.
As for the system $\overline{\mathfrak D} u=g$, when $g\in C^\infty(\Omega,\HH)$,
$\Omega\subset\HH^n$, we have the following: if $n=2$ and $\Omega$ is convex, the system
has a smooth solution if and only if $\overline P(g)=0$ (see~\cite{ABLSS}).
If $n>2$, conditions~\ref{cr2} are no longer sufficient in general.
For $g\in C^\infty(\Omega,\HH)$, $\Omega$ convex, using the results of~\cite{AL,CSSS},
necessary and sufficient conditions were proved in~\cite{BDS}.
\begin{rem}\label{14ott}
The same is true if $g$ is replaced by a distribution.
This is a consequence of the ``division of distributions''~\cite{Eh,Ma,Pa,AN,N}.
We will use this generalization in Section~\ref{F1}.
\end{rem}
As far as we know, nothing is known about the existence of the equation $\overline{\mathfrak D} u=g$ in more general domains.  
\section{Riemann-Hilbert and Dirichlet problems for \texorpdfstring{$\HH$}{H}-holomorphic functions.}
\label{TEXI}

\subsection{The operator \texorpdfstring{${\overline{\mathfrak D}}_{\rm b}$}{barDb} and the CRF condition}
\label{sec:crf}

Let $\Omega\subset \HH^2$ be a domain.
A \emph{domain splitting} $(S,U^+,U^-)$ of $\Omega$ is given by a smooth (nonempty)
hypersurface $S$ closed in $\Omega$ and two open disjoint nonempty sets $U^+$, $U^-$,
such that $\Omega\setminus S = U^+ \cup U^-$, where
both $U^+$ and $U^-$ have boundary $S$ in $\Omega$.

We say that a continuous (resp.~smooth\footnote{For convenience of exposition,
since our work reposes in an essential way to the theory of Ehrenpreis and its
applications~\cite{Eh,CSSS}, we restrict ourselves to the class of $C^\infty$
functions, even if some definitions and constructions can be given in a more
general setting.}) function $f:S\to\HH$ is
a \emph{continuous (resp.~smooth) jump} relative to a domain
splitting $(S,U^+,U^-)$ of $\Omega$, if there exist two $\HH$-holomorphic
functions $F^+$, $F^-$, on $U^+$, $U^-$ respectively, such that
$F^+$, $F^-$ are continuous (resp.~smooth) up to $S$ and $f=F^+|_S-F^-|_S$.

A function $f:S\to\HH$ (continuous or smooth) is \emph{locally a jump} if,
for every $q_0\in S$, there exists a neighborhood $U$ of $q_0$
such that $f|_{U\cap S}$ is a jump in $U$.

Observe that the functions $F^+$, $F^-$ are determined up
an $\HH$-holomorphic function in $U$.
In particular, if $S$ is the boundary of a bounded domain in $\HH^2$, 
Dirichlet problem reduces to Riemann-Hilbert problem via the Hartogs theorem.

Both these problems require conditions on the given function $f: S\to\HH$
that we call CRF conditions.

Let $S$ be defined by $\rho=0$. We say that a smooth function $f:S\to\HH$
is a (\emph{left}) \emph{CRF function} if,
there is a smooth extension $F$ of $f$ on a neighborhood of $S$,
such that we have
\begin{equation}\label{tan1}
\overline{\mathfrak D} F=\rho\cdot A+\overline{\mathfrak D}\rho \cdot B,
\end{equation}
with $A$ and $B$ smooth.
The {\rm CRF} condition is independent of the extension $F$,
as well as of the equation of $S$.

The {\rm CRF} condition can be given in a more intrinsic way,
as shown in Theorem~\ref{lu56} below.

\begin{rem}
\label{rmk:smooth-extension}
Observe that, $f$ is a {\rm CRF} function if and only if there exists a smooth
extension $F_1$ of $f$ with $\overline{\mathfrak D} F_1=0$ on $S$.
(It is enough to take $F_1=F-\rho\cdot B$, where $F$ satisfies~\ref{tan1}.)
\end{rem}

Clearly, if $F$ is an $\HH$-holomorphic function on one sided neighborhood of $S$,
then $F|_S$ is a {\rm CRF} function, in particular, every local jump $f$ 
on $S$ is a {\rm CRF} function.

We will see below that, unlike the complex case, trace conditions
on $f$ involve both first-order and second-order differential
equations (Remark~\ref{rmk:restrict-crf}).

This is not surprising, due to the fact that Riemann-Hilbert problem
is related to local solvability of $\overline{\mathfrak D}u=g$
and this requires a second-order differential condition for $g$.

If $F=U+V{\sf j}$ is an extension of
$f$, $q_1=z_1+w_1\sf j$, $q_2=z_2+w_2\sf j$,
where $U,V,z_1,w_1,z_2,w_2$ are complex,
then the {\rm CRF} condition writes
\begin{equation}\label{tan2}
{\rm rank} \left(\begin{array}{ccccc}
U_{\overline z_1}-\overline V_{\overline w_1 }&\rho_{\overline z_1}&-\rho_{\overline w_1}\\
\overline V_{z_1}+U_{w_1}&\rho_{w_1}& \rho_{z_1}\\
U_{\overline z_2}-\overline V_{\overline w_2} &\rho_{\overline z_2}&-\rho_{\overline w_2}\\
\overline V_{z_2}+U_{w_2}&\rho_{w_2}& \rho_{z_2}
\end{array}\right)<3.
\end{equation}

\subsubsection{{\rm CRF} condition and extendability}
Suppose $S$ oriented.  Denote $\omega$ the volume form
of $S$ and $\nu=(\nu_1,\nu_2)$, $\nu_1,\nu_2\in\HH$,
the unit normal vector which gives the orientation of $S$.

Let $\langle,\rangle:\HH^2\times\HH^2\to\HH$ be the scalar product
$$
\big\langle (q_1,q_2), (p_1,p_2 )\big\rangle=\overline q_1p_1+\overline q_2 p_2.
$$
By direct computation, one verify that
\begin{equation}\label{lu50}
\big(\overline{Dq}_1\wedge{\rm d}y\big)\big|_S=-\overline\nu_1\omega,\qquad
\big({\rm d}x\wedge\overline{Dq}_2\big)\big|_S=-\overline\nu_2\,\omega.
\end{equation}
Let $f:S\to\HH$ be smooth and $F$ a smooth extension of $f$ on a neighborhood of $S$.
Then, by restriction to $S$, from~\eqref{luB} we get

\begin{equation}\label{lu51}
-\frac{1}{2}\Big(\overline{{\rm d}q}_1\wedge{\rm d}q_1\wedge{\rm d}y
+{\rm d}x\wedge\overline{{\rm d}q}_2\wedge{\rm d}{q_2}\Big)\Big|_S\wedge{\rm d}f
= \Big(-\big\langle \nu,{\overline{\mathfrak D}F|_S}\big\rangle+\frac{\partial F}{\partial\nu}\Big)\omega,
\end{equation}
where $\overline{\mathfrak D}F=\big(\frac{\partial F}{\partial\overline q_1},\frac{\partial F}{\partial\overline q_2}\big)$.

Let $f^\perp:S\to\HH$ be the smooth function defined by
\begin{equation}\label{lu52}
-\frac{1}{2}\Big(\overline{{\rm d}q}_1\wedge{\rm d}q_1\wedge{\rm d}y
+{\rm d}x\wedge\overline{{\rm d}q}_2\wedge{\rm d}{q_2}\Big)\Big|_S\wedge{\rm d}f
= f^\perp\cdot\omega
\end{equation}
and set
\begin{equation}\label{lu53}
\frac{\partial}{\partial x_\alpha}\Big|_S
=\tau_{x_\alpha}+\left(\frac{\partial}{\partial x_\alpha},\nu\right)\nu,\qquad
\frac{\partial}{\partial y_\alpha}\Big|_S
=\tau_{y_\alpha}+\left(\frac{\partial}{\partial y_\alpha},\nu\right)\nu
\end{equation}
$\alpha=0, 1, 2, 3$, where $(\cdot,\cdot)$ denotes the euclidean scalar
product of $\RR^8$ and $\tau_{x_\alpha}$, $\tau_{y_\alpha}$ are the tangential
components of $\frac{\partial}{\partial x_\alpha}\big|_S$, $\frac{\partial}{\partial y_\alpha}\big|_S$ respectively.

We set
\begin{equation}
  \label{lu54}
  \begin{aligned}
  f_{(x_\alpha)}&=\tau_{x_\alpha}(f)+\big(\frac{\partial}{\partial x_\alpha},\nu\big)f^\perp,\\
  f_{(y_\alpha)}&=\tau_{y_\alpha}(f)+\big(\frac{\partial}{\partial y_\alpha},\nu\big)f^\perp,\\
  f_{(\overline q_1)}&=f_{(x_0)}+{\sf i}f_{(x_1)}+{\sf j}f_{(x_2)}+{\sf k}f_{(x_3)},\\
  f_{(\overline q_2)}&=f_{(y_0)}+{\sf i}f_{(y_1)}+{\sf j}f_{(y_2)}+{\sf k}f_{(y_3)};
  \end{aligned}
\end{equation}
they are smooth functions on $S$.

\begin{pro}\label{lu12}
Let $f:S\to\HH$ be a smooth {\rm CRF} function and $F$ a smooth
local extension of $f$ such that $\overline{\mathfrak D}F=0$ on $S$.
Then,
\begin{equation}
\label{eq:s25}
\frac{\partial F}{\partial \nu} = f^\perp,\quad
\frac{\partial F}{\partial {x_\alpha}}\Big|_S = f_{(x_\alpha)},\quad
\frac{\partial F}{\partial {y_\alpha}}\Big|_S=f_{(y_\alpha)},
\end{equation}
for $\alpha=0, 1, 2, 3$.
\end{pro}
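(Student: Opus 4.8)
The plan is to extract all three identities by combining the restriction formula~\eqref{lu51} with the tangential/normal decomposition~\eqref{lu53}, using the normal derivative $\partial F/\partial\nu$ as the link between them. The guiding observation is that the entire boundary $1$-jet of such an extension $F$ is already determined by the trace $f$, so the proof amounts to identifying each piece against its definition.

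First I would prove $\partial F/\partial\nu = f^\perp$. The left-hand sides of~\eqref{lu51} and~\eqref{lu52} coincide: the $6$-form $\frac12(\overline{{\rm d}q}_1\wedge{\rm d}q_1\wedge{\rm d}y + {\rm d}x\wedge\overline{{\rm d}q}_2\wedge{\rm d}q_2)\big|_S$ does not involve $F$, and ${\rm d}F$ restricts to ${\rm d}f$ on $S$. Equating the right-hand sides gives, for the chosen extension,
\[
f^\perp = -\big\langle\nu,\overline{\mathfrak D}F|_S\big\rangle + \frac{\partial F}{\partial\nu}.
\]
The hypothesis $\overline{\mathfrak D}F = 0$ on $S$ annihilates the scalar-product term and yields $\partial F/\partial\nu = f^\perp$. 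This is precisely where the {\rm CRF} assumption on $F$ enters: for a general extension the normal derivative would differ from $f^\perp$ by the boundary term $\langle\nu,\overline{\mathfrak D}F|_S\rangle$.

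Next I would treat the coordinate derivatives. Applying~\eqref{lu53} to $F$ gives
\[
\frac{\partial F}{\partial x_\alpha}\Big|_S = \tau_{x_\alpha}(F) + \Big(\frac{\partial}{\partial x_\alpha},\nu\Big)\frac{\partial F}{\partial\nu}.
\]
Since $\tau_{x_\alpha}$ is tangent to $S$, the derivative $\tau_{x_\alpha}(F)$ depends only on $F|_S = f$, hence $\tau_{x_\alpha}(F) = \tau_{x_\alpha}(f)$; and $\partial F/\partial\nu = f^\perp$ by the first step. Substituting and comparing with the definition~\eqref{lu54} of $f_{(x_\alpha)}$ gives $\partial F/\partial x_\alpha|_S = f_{(x_\alpha)}$, and the identical argument with $y_\alpha$ in place of $x_\alpha$ produces $\partial F/\partial y_\alpha|_S = f_{(y_\alpha)}$ for $\alpha = 0,1,2,3$.

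No step is a genuine obstacle: once~\eqref{luB}, and with it~\eqref{lu51}, is available, the proposition is a direct unwinding of the definitions~\eqref{lu52} and~\eqref{lu54}. The only point deserving care is the tangency remark $\tau_{x_\alpha}(F) = \tau_{x_\alpha}(f)$ --- the standard fact that a vector field tangent to $S$ reads a function only through its restriction --- which is what makes the boundary $1$-jet of any {\rm CRF} extension depend on $f$ alone.
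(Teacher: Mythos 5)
Your proof is correct and follows essentially the same route as the paper's: equate the two expressions for the $7$-form in~\eqref{lu51} and~\eqref{lu52}, use $\overline{\mathfrak D}F=0$ on $S$ to kill the term $\langle\nu,\overline{\mathfrak D}F|_S\rangle$ and obtain $\partial F/\partial\nu=f^\perp$, then feed this into the decomposition~\eqref{lu53} and the definitions~\eqref{lu54}. The paper leaves implicit the two points you spell out (that the left-hand sides of~\eqref{lu51} and~\eqref{lu52} coincide, and that $\tau_{x_\alpha}(F)=\tau_{x_\alpha}(f)$ since tangential fields see only the trace), so your write-up is simply a more detailed version of the same argument.
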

\begin{proof}
Since $\overline{\mathfrak D}F=0$ on $S$ and $F|_S=f$, by Remark~\ref{lu51}
$$
-\frac{1}{2}\left(\overline{{\rm d}q}_1\wedge{\rm d}q_1\wedge{\rm d}y\wedge{\rm d}f
+{\rm d}x\wedge\overline{{\rm d}q}_2\wedge{\rm d}{q_2}\wedge{\rm d}f\right)\big|_S
=\frac{\partial F}{\partial \nu}\omega
$$
and comparing with~\eqref{lu52} we then have $\frac{\partial F}{\partial \nu}=f^\perp$.
Formulas~\eqref{lu53} now
imply $\frac{\partial F}{\partial {x_\alpha}}\big|_S=f_{(x_\alpha)}$,
$\frac{\partial F}{\partial {y_\alpha}}\big|_S=f_{(y_\alpha)}$, $\alpha=0, 1, 2, 3$.
\end{proof}

\begin{rem}
\label{rmk:restrict-crf}
If $f$ is the boundary value of an $\HH$-holomorphic function $F$,
then, by Proposition~\ref{lu12}, we get
\[
  \label{eq:lu12}
  \begin{aligned}
    {\frac{\partial F}{\partial x_\alpha}}\big|_S&=f_{(x_\alpha)}\\
    {\frac{\partial F}{\partial y_\alpha}}\big|_S&=f_{(y_\alpha)}
  \end{aligned}
  \qquad\text{for $\alpha=0,1, 2, 3$.}
\]
Since the operators $\overline{\mathfrak D}$,
$\partial/\partial x_\alpha$, $\partial/\partial y_\alpha$ commute,
$f_{(x_\alpha)}$ and $f_{(y_\alpha)}$ are restrictions of
the $\HH$-holomorphic functions $\frac{\partial F}{\partial x_\alpha}$
and $\frac{\partial F}{\partial y_\alpha}$ respectively,
hence $f_{(x_\alpha)}$, $f_{(y_\alpha)}$ are CRF functions too.
\end{rem}

A smooth {\rm CRF} function $f:S\to\HH$ is said to be \emph{admissible}
if $f_{(x_\alpha)},f_{(y_\alpha)}$, $\alpha=0,1, 2, 3$, are ${\rm CRF}$ functions too.
Unlike the complex case, a {\rm CRF} function is not admissible in general.
Here is a counterexample:
\begin{exa*}
Let $S=\{y_3=0\}$, $f=-x_1y_0{\sf j}+x_0y_0{\sf k}$.
Since $\partial f/\partial \overline q_1=0$, $f$ is {\rm CRF}.
Moreover, $f^\perp=f_{(y_3)}=-x_0+x_1{\sf i}$.
In particular, if $f_{(y_3)}$ were CRF 
we should have $\partial f_{(y_3)}/\partial \overline q_1=0$,
whereas $\partial f_{(y_3)}/\partial \overline q_1=-2$.
\end{exa*}

\subsubsection{The tangential operator ${\overline{\mathfrak D}}_{\rm b}$}

The {\rm CRF} condition determines a differential operator on $S$ that
will be denoted by ${\overline{\mathfrak D}}_{\rm b}$.
We want to write explicitly the operator ${\overline{\mathfrak D}}_{\rm b}$.

Consider on $S$ the following $\HH$-valued differential forms
\begin{equation}
  \label{lu62}
  \begin{aligned}
    {\rm d}_{(q_1)}f
    &= f_{(x_0)}{\rm d}{x_0}|_S+f_{(x_1)}{\rm d}{x_1}|_S+f_{(x_2)}{\rm d}{x_2}|_S+f_{(x_3)}{\rm d}{x_3}|_S\\
    {\rm d}_{(q_2)}f
    &=f_{(y_0)}{\rm d}{y_0}|_S+f_{(y_1)}{\rm d}{y_1}|_S+f_{(y_2)}{\rm d}{y_2}|_S+f_{(y_3)}{\rm d}{y_3}|_S.
  \end{aligned}
\end{equation}
The following equalities hold
\begin{equation}
  \label{lu55}
  \begin{aligned}
    {{\rm D}q_1}|_S\wedge{\rm d}_{(q_1)}f
    &=-f_{(\overline q_1)}\,{{\rm d}x}|_S\\
    {{\rm D}q_2}|_S\wedge{\rm d}_{(q_2)}f
    &=-f_{(\overline q_2)}\,{{\rm d}y}|_S.
  \end{aligned}
\end{equation}
We have the following
\begin{teo}\label{lu56}
For a given smooth function $f$ on $S$ the following conditions are equivalent:
\begin{itemize}
\item[a)] $f$ is a {\rm CRF} function;
\item[b)] $f_{(\overline q_1)} \equiv f_{(\overline q_2)} \equiv 0$;
\item[c)] ${{\rm D}q_1}|_S\wedge{\rm d}_{(q_1)}f \equiv {{\rm D}q_2}|_S\wedge{\rm d}_{(q_2)}f \equiv 0$.
\end{itemize}
\end{teo}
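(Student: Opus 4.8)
The plan is to establish the analytic equivalence (a) $\Leftrightarrow$ (b) first, using Remark~\ref{rmk:smooth-extension} and Proposition~\ref{lu12}, and then to deduce (b) $\Leftrightarrow$ (c) from the algebraic identities~\eqref{lu55}. For (a) $\Rightarrow$ (b): if $f$ is a CRF function, Remark~\ref{rmk:smooth-extension} provides a smooth extension $F$ with $\overline{\mathfrak D}F=0$ on $S$, and Proposition~\ref{lu12} gives $\frac{\partial F}{\partial x_\alpha}|_S=f_{(x_\alpha)}$, $\frac{\partial F}{\partial y_\alpha}|_S=f_{(y_\alpha)}$. Hence, by the definition~\eqref{lu54},
$$
f_{(\overline q_1)}=\sum_\alpha{\sf i}_\alpha\frac{\partial F}{\partial x_\alpha}\Big|_S=\frac{\partial F}{\partial\overline q_1}\Big|_S=0,
$$
and likewise $f_{(\overline q_2)}=0$.

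For (b) $\Rightarrow$ (a) I would build, by hand, an extension of $f$ realizing $f^\perp$ as its normal derivative. Taking any smooth extension $\tilde f$ of $f$ and a smooth function $c$ with $c|_S=(f^\perp-\partial_\nu\tilde f)/|\nabla\rho|$, the function $F=\tilde f+\rho\,c$ satisfies $F|_S=f$ and $\frac{\partial F}{\partial\nu}|_S=f^\perp$. Decomposing $\frac{\partial}{\partial x_\alpha}|_S=\tau_{x_\alpha}+(\frac{\partial}{\partial x_\alpha},\nu)\nu$ as in~\eqref{lu53} and using that $\tau_{x_\alpha}$ is tangent to $S$, one gets $\frac{\partial F}{\partial x_\alpha}|_S=\tau_{x_\alpha}(f)+(\frac{\partial}{\partial x_\alpha},\nu)f^\perp=f_{(x_\alpha)}$ by~\eqref{lu54}, and similarly for the $y_\alpha$; therefore $\frac{\partial F}{\partial\overline q_s}|_S=f_{(\overline q_s)}=0$ for $s=1,2$, i.e. $\overline{\mathfrak D}F=0$ on $S$, so $f$ is CRF by Remark~\ref{rmk:smooth-extension}.

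It remains to prove (b) $\Leftrightarrow$ (c), where the identities~\eqref{lu55} do most of the work: (b) $\Rightarrow$ (c) is immediate, since the right-hand sides $-f_{(\overline q_1)}\,{\rm d}x|_S$ and $-f_{(\overline q_2)}\,{\rm d}y|_S$ then vanish. I expect the converse (c) $\Rightarrow$ (b) to be the main obstacle, because ${\rm d}x|_S$ (resp.~${\rm d}y|_S$) may vanish at points of $S$, so that~\eqref{lu55} cannot simply be divided. I would split $S$ accordingly and show $f_{(\overline q_1)}\equiv 0$ in two steps. On the open set $\{\nu_2\neq0\}$, formula~\eqref{lu50} gives ${\rm d}x\wedge\overline{Dq}_2|_S=-\overline\nu_2\,\omega\neq0$, hence ${\rm d}x|_S\neq0$, and~\eqref{lu55} together with (c) forces $f_{(\overline q_1)}=0$ there.

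On the complementary closed set $\{\nu_2=0\}$ I would instead show that $f_{(\overline q_1)}$ vanishes automatically. Combining the extension-independent expression~\eqref{lu51} for $f^\perp$ with~\eqref{lu54}, one obtains for any extension $F$ the identity
$$
f_{(\overline q_1)}=\frac{\partial F}{\partial\overline q_1}\Big|_S-\nu_1\frac{\partial F}{\partial\nu}\Big|_S+\nu_1 f^\perp,
$$
while~\eqref{lu51} reads $f^\perp=-\overline\nu_1\frac{\partial F}{\partial\overline q_1}|_S-\overline\nu_2\frac{\partial F}{\partial\overline q_2}|_S+\frac{\partial F}{\partial\nu}|_S$. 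Where $\nu_2=0$, so that $\nu_1\overline\nu_1=|\nu|^2=1$, substituting $f^\perp=-\overline\nu_1\frac{\partial F}{\partial\overline q_1}|_S+\frac{\partial F}{\partial\nu}|_S$ makes the right-hand side collapse to $0$. Thus $f_{(\overline q_1)}\equiv0$ on all of $S$, and the symmetric argument (exchanging $q_1$ with $q_2$ and $\nu_1$ with $\nu_2$) gives $f_{(\overline q_2)}\equiv0$, which is (b). The only genuinely delicate point is this handling of the degenerate locus; everywhere else the statement reduces to the bookkeeping of~\eqref{lu53}, \eqref{lu54} and the identities~\eqref{lu50}, \eqref{lu51}, \eqref{lu55}.
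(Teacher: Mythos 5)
Your proof is correct and follows essentially the same route as the paper's: (a)$\Rightarrow$(b) via Remark~\ref{rmk:smooth-extension} and Proposition~\ref{lu12}, the identities~\eqref{lu55} for passing between (b) and (c) at points where ${\rm d}x|_S$ and ${\rm d}y|_S$ do not vanish, an extension with prescribed normal derivative $f^\perp$, and a separate argument on the degenerate locus where ${\rm d}x|_S$ (equivalently $\nu_2$) vanishes. Your treatment of that locus --- an arbitrary extension $F$ together with the collapse $\nu_1\overline\nu_1=1$ --- is the same computation as the paper's, which instead builds the special extension satisfying $\partial F/\partial\nu=f^\perp$, deduces $\big\langle\nu,\overline{\mathfrak D}F|_S\big\rangle=0$ from~\eqref{lu51} and~\eqref{lu52}, and divides by $\overline\nu_1(p)$; both arguments reveal the same underlying fact, namely that $f_{(\overline q_1)}$ vanishes automatically wherever $\nu_2=0$.
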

\begin{proof}
Let $f$ be {\rm CRF}.
Then,
there exists a smooth extension $F$ of $f$ with
the property $\overline{\mathfrak D}F=0$ on $S$
(Remark~\ref{rmk:smooth-extension}).
From~\eqref{eq:s25}, we get
$$
  \partial F/\partial\nu=f^\perp,\qquad
  \frac{\partial F}{\partial x_\alpha}\Big|_S
  =f_{(x_\alpha)},\qquad
  \frac{\partial F}{\partial y_\alpha}|_S=f_{(y_\alpha)},
$$
$\alpha=0, 1, 2, 3.$
Consequently
\begin{equation}\label{lu57}
\frac{\partial F}{\partial \overline q_1}\Big|_S
=f_{(\overline q_1)},\qquad \frac{\partial F}{\partial \overline q_2}\Big|_S
=f_{(\overline q_2)}.
\end{equation}
By hypothesis, $\overline{\mathfrak D}F=0$ on $S$
hence $f_{(\overline q_1)}=f_{(\overline q_2)}=0$ and therefore, by~\eqref{lu55}
\begin{equation}\label{lu58}
{{\rm D}q_1}|_S\wedge{\rm d}_{(q_1)}f={{\rm D}q_2}|_S\wedge{\rm d}_{(q_2)}f=0,
\end{equation}
i.e.,~b) and~c).

Assume that $f$ satisfies c).
Then, by~\eqref{lu55}, we have $f_{(\overline q_1)}\,{\rm d}x|_S\equiv 0$,
$f_{(\overline q_2)}\,{\rm d}y|_S\equiv 0$ and,
if ${\rm d}x|_S(p)\neq 0$, ${\rm d}y|_S(p)\neq 0$, $p\in S$,
then $f_{(\overline q_1)}(p)=f_{(\overline q_2)}(p)=0$.
Suppose, for instance, that ${\rm d}x|_S(p)=0$.
Then, the second of~\eqref{lu50} implies $\nu_2(p)=0$,
i.e., $\nu(p)=\big(\nu_1(p),0\big)$, where $\nu_1(p)\neq 0$.
Thus, ${\rm d}y|_S(p)\neq 0$, otherwise (again by~\eqref{lu50}),
we should have $\nu_1(p)=0$,
consequently, $f_{(\overline q_2)}(p)=0$.

Let us show that necessarily $f_{(\overline q_1)}(p)=0$.
By standard argument of differential topology, it is easy to construct a
smooth extension $F$ of $f$ such that $\partial F/\partial\nu=f^\perp$.
Identity~\eqref{lu51} and definition of $f^\perp$ then imply
that $\langle \nu(p),\overline {\mathfrak D} F(p)\rangle=0$, i.e.,
$\overline\nu_1(p)\partial F/\partial \overline q_1(p)=0$, whence $\partial F/\partial \overline q_1(p)=0$.
Arguing as in the first part of the proof,
we get $\partial F/\partial {\overline q_1}|_S=f_{(\overline q_1)}$,
$\partial F/\partial {\overline q_2}|_S=f_{(\overline q_2)}$, in
particular, also $f_{(\overline q_1)}(p)=0$
for every $p\in S$, and c) imply b).
Furthermore, $\overline{\mathfrak D} F = 0$ on $S$,
hence c) implies a) too.
\end{proof}
We denote ${\overline{\mathfrak D}}_{\rm b}$ the operator
\begin{equation}\label{lu60}
  {\overline{\mathfrak D}}_{\rm b} : 
f\longmapsto
\Big({{\rm D}q_1}|_S\wedge {\rm d}_{(q_1)}f
,{{\rm D}q_2}|_S\wedge {\rm d}_{(q_2)}f\Big).
\end{equation}

\subsection{Solvability of the Riemann-Hilbert problem}
We want to prove that for smooth admissible functions the local
Riemann-Hilbert problem is always solvable.

We consider an orientable smooth hypersurface $S$,
given as the zero set of a smooth
function $\rho$ such that $\nabla\rho\neq 0$ around $S$.
\begin{pro}\label{ag11}
Let $f:S\to\HH$ be a smooth function. The following properties are equivalent
\begin{itemize}
\item[i)] $f$ is admissible;
\item[ii)] there exists a smooth extension $F$ of $f$ such that
around $S$ one has $\overline{\mathfrak D}F=\rho^2u$,
with $u$ a smooth $\HH^2$-valued map.
\end{itemize}
\end{pro}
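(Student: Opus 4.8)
The statement is an equivalence, of which the implication (ii) $\Rightarrow$ (i) is the quick one and (i) $\Rightarrow$ (ii) carries the real content. I will treat them in this order.

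For (ii) $\Rightarrow$ (i), suppose $F$ is a smooth extension of $f$ with $\overline{\mathfrak D}F=\rho^2u$. Then $\overline{\mathfrak D}F|_S=0$, so $f$ is a CRF function by Remark~\ref{rmk:smooth-extension}, and Proposition~\ref{lu12} gives $\partial F/\partial x_\alpha|_S=f_{(x_\alpha)}$ and $\partial F/\partial y_\alpha|_S=f_{(y_\alpha)}$. Since $\overline{\mathfrak D}$ commutes with the constant-coefficient operators $\partial/\partial x_\alpha$, $\partial/\partial y_\alpha$, we have $\overline{\mathfrak D}\big(\partial F/\partial x_\alpha\big)=\partial(\rho^2u)/\partial x_\alpha=2\rho(\partial\rho/\partial x_\alpha)u+\rho^2\,\partial u/\partial x_\alpha$, which vanishes on $S$; here it is essential that $\overline{\mathfrak D}F$ be divisible by $\rho^2$ and not merely by $\rho$, since a single differentiation of a multiple of $\rho$ need not vanish on $S$. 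Thus $\partial F/\partial x_\alpha$ is a smooth extension of $f_{(x_\alpha)}$ whose image under $\overline{\mathfrak D}$ vanishes on $S$, so $f_{(x_\alpha)}$ is CRF by Remark~\ref{rmk:smooth-extension}, and likewise $f_{(y_\alpha)}$; hence $f$ is admissible.

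For (i) $\Rightarrow$ (ii), I start from the extension supplied by the CRF property: choose (by a standard extension through a tubular neighbourhood of $S$) a smooth extension $F_0$ of $f$ with $\partial F_0/\partial\nu|_S=f^\perp$, where $\nu=\nabla\rho/|\nabla\rho|$. By the computation behind~\eqref{lu54} one has $\partial F_0/\partial x_\alpha|_S=f_{(x_\alpha)}$ and $\partial F_0/\partial y_\alpha|_S=f_{(y_\alpha)}$, whence $\overline{\mathfrak D}F_0|_S=\big(f_{(\overline q_1)},f_{(\overline q_2)}\big)=0$ by Theorem~\ref{lu56}. Setting $g^{(0)}_s:=\partial F_0/\partial\overline q_s$, we thus have $g^{(0)}_s|_S=0$, so $g^{(0)}_s=\rho\,h_s$ with $h_s$ smooth. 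I then correct $F_0$ without disturbing its $1$-jet along $S$: put $F=F_0+\rho^2P$ with $P$ to be chosen. Since $\partial(\rho^2)/\partial\overline q_s=2\rho\,\partial\rho/\partial\overline q_s$, one gets $\partial F/\partial\overline q_s=\rho\big(h_s+2\,\tfrac{\partial\rho}{\partial\overline q_s}P\big)+\rho^2(\cdots)$, so $\overline{\mathfrak D}F$ is divisible by $\rho^2$ as soon as
\[
\frac{\partial\rho}{\partial\overline q_s}\,P\Big|_S=-\tfrac12\,h_s\big|_S,\qquad s=1,2.
\]
A priori this is two quaternionic equations for the single unknown $P|_S$; the heart of the matter is that admissibility forces them to be compatible.

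To establish compatibility, I would first record the identity
\[
\frac{\partial G}{\partial\overline q_s}\Big|_S=g_{(\overline q_s)}+\nu_s\Big(\frac{\partial G}{\partial\nu}\Big|_S-g^\perp\Big),
\]
valid for every smooth $G$ with trace $g:=G|_S$; it comes from the splitting~\eqref{lu53} together with the definitions~\eqref{lu54}, exactly as in the proof of Theorem~\ref{lu56}. Apply it to $G=\partial F_0/\partial x_\beta$ (trace $f_{(x_\beta)}$) and to $G=\partial F_0/\partial y_\beta$ (trace $f_{(y_\beta)}$). By admissibility, $f_{(x_\beta)}$ and $f_{(y_\beta)}$ are CRF, so $\big(f_{(x_\beta)}\big)_{(\overline q_s)}=\big(f_{(y_\beta)}\big)_{(\overline q_s)}=0$ by Theorem~\ref{lu56}, and the first term on the right drops: each of $\tfrac{\partial}{\partial\overline q_s}\tfrac{\partial F_0}{\partial x_\beta}|_S$ and $\tfrac{\partial}{\partial\overline q_s}\tfrac{\partial F_0}{\partial y_\beta}|_S$ becomes $\nu_s$ times a quaternion \emph{independent of $s$}. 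Writing $\partial/\partial\nu=\sum_\beta\nu^x_\beta\,\partial/\partial x_\beta+\sum_\beta\nu^y_\beta\,\partial/\partial y_\beta$ with $\nu^x_\beta=\big(\partial/\partial x_\beta,\nu\big)$ (so that $\nu_1=\sum_\beta\nu^x_\beta{\sf i}_\beta$, $\nu_2=\sum_\beta\nu^y_\beta{\sf i}_\beta$) and commuting $\partial/\partial\overline q_s$ past the coordinate derivatives, I may factor the real scalars $\nu^x_\beta,\nu^y_\beta$ to the right of $\nu_s$ to obtain $\partial g^{(0)}_s/\partial\nu|_S=\nu_s\,\psi$ for one and the same smooth $\psi$; equivalently $h_s|_S=\nu_s\,\psi/|\nabla\rho|$.

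Finally, since $\nabla\rho$ is normal to $S$ one has $\partial\rho/\partial\overline q_s=|\nabla\rho|\,\nu_s$ on $S$, and the system above becomes $\nu_s\big(|\nabla\rho|\,P+\tfrac{1}{2|\nabla\rho|}\psi\big)|_S=0$ for $s=1,2$. As $\nu\neq0$, at least one $\nu_s$ is invertible at each point, but the choice $P|_S=-\psi/(2|\nabla\rho|^2)$ annihilates the bracket and hence solves both equations simultaneously, irrespective of which $\nu_s$ vanishes. With any smooth extension of this $P$, $\overline{\mathfrak D}F$ is divisible by $\rho^2$, i.e.\ $\overline{\mathfrak D}F=\rho^2u$ with $u$ smooth, which is (ii). The main obstacle is precisely the third step: showing that the two obstruction functions $h_1,h_2$ are left-multiples of $\nu_1,\nu_2$ by one common factor. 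This proportionality is what turns the overdetermined correction system into a solvable one, and it is exactly here that admissibility is used; extracting it demands careful noncommutative bookkeeping with the jet identity above, after which the construction of $P$ is automatic.
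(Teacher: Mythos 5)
Your proof is correct in both directions, and its overall skeleton for (i) $\Rightarrow$ (ii) agrees with the paper's: start from an extension whose $\overline{\mathfrak D}$ is divisible by $\rho$, use admissibility to show that the obstruction $h_s=\rho^{-1}\partial F_0/\partial\overline q_s$ is, on $S$, a left multiple of $\nu_s$ (equivalently of $\partial\rho/\partial\overline q_s$) by a factor \emph{common} to $s=1,2$, and then correct by a $\rho^2$-multiple. The implication (ii) $\Rightarrow$ (i) is exactly the paper's argument. Where you genuinely differ is the mechanism that produces the common factor. The paper (formulas \eqref{ag12}--\eqref{ag17}) introduces, for each derived trace $f_{(x_\alpha)}$, $f_{(y_\alpha)}$, an auxiliary CRF extension $F^{(x_\alpha)}$, writes $\partial G/\partial x_\alpha-F^{(x_\alpha)}=\rho\psi^{(x_\alpha)}$, applies $\overline{\mathfrak D}$, and equates two computations of $\partial(\overline{\mathfrak D}G)/\partial\nu$, arriving at $\psi=\rho\Phi+2\overline{\mathfrak D}\rho\cdot\Theta$ and the correction $F=G-\rho^2\Theta$. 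You bypass the auxiliary extensions entirely: your pointwise jet identity
\begin{equation*}
\frac{\partial G}{\partial\overline q_s}\Big|_S=g_{(\overline q_s)}+\nu_s\Big(\frac{\partial G}{\partial\nu}\Big|_S-g^\perp\Big),
\end{equation*}
applied to $G=\partial F_0/\partial x_\beta$ with the tangential term killed by admissibility via Theorem~\ref{lu56}, shows directly that each $\tfrac{\partial}{\partial\overline q_s}\tfrac{\partial F_0}{\partial x_\beta}\big|_S$ is $\nu_s$ times an $s$-independent quaternion, and contraction with $\nu$ gives $h_s|_S=\nu_s\psi/\lvert\nabla\rho\rvert$; your $P|_S=-\psi/(2\lvert\nabla\rho\rvert^2)$ is then the paper's $-\Theta|_S$ up to normalization. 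What your route buys is a cleaner isolation of where admissibility and the noncommutative left-factor structure enter (your identity is essentially the one implicit in the paper's proof of Theorem~\ref{lu56}, promoted to a standalone lemma); what the paper's route buys is that it runs entirely on already-established existence statements (Remark~\ref{rmk:smooth-extension}, Proposition~\ref{lu12}) without proving any new pointwise formula. The remaining details in your write-up (Hadamard division near $S$, the fact that only $P|_S$ matters so its extension off $S$ is arbitrary, the left-multiplication bookkeeping) are all handled correctly.
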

\begin{proof}
Let $F$ as in ii). Clearly $f$ is {\rm CRF}.
By Proposition~\ref{lu12}, we have $\partial F/{\partial x_\alpha}|_S=f_{(x_\alpha)}$,
$\partial F/{\partial y_\alpha}|_S=f_{(y_\alpha)}$, $\alpha=0, 1, 2, 3$. Moreover,
\begin{equation}
  \begin{aligned}
    \overline{\mathfrak D}\Big(\frac{\partial F}{\partial x_\alpha}\Big)
    &=\frac{\partial}{\partial x_\alpha}\Big(\overline{\mathfrak D} F\Big)
    =\rho\Big(2\frac{\partial\rho}{\partial x_\alpha}u+\rho\frac{\partial u}{\partial x_\alpha}\Big)\\
    \overline{\mathfrak D}\Big(\frac{\partial F}{\partial y_\alpha}\Big)
    &=\frac{\partial}{\partial y_\alpha}\Big(\overline{\mathfrak D} F\Big)
    =\rho\Big(2\frac{\partial\rho}{\partial y_\alpha}u+\rho\frac{\partial u}{\partial y_\alpha}\Big),
  \end{aligned}
  \qquad \text{for $\alpha=0, 1, 2, 3$.}
\end{equation}
Therefore, $\partial F/{\partial x_\alpha}$ ($\partial F/{\partial y_\alpha}$), $\alpha=0, 1, 2, 3$,
is a smooth extension of $f_{(x_\alpha)}$ ($f_{(y_\alpha)}$),
whose $\overline{\mathfrak D}$ is vanishing on $S$.
It follows that $f$ is admissible.

Assume now that $f$ is admissible, in particular {\rm CRF}.
Therefore, there is a smooth extension $G$ of $f$ and a
smooth $\HH^2$-valued map $\psi$ such that $\overline{\mathfrak D} G=\rho\psi$.
Again, by Proposition~\ref{lu12}, one has $\partial G/{\partial x_\alpha}|_S=f_{(x_\alpha)}$,
$\partial G/{\partial y_\alpha}|_S=f_{(y_\alpha)}$, $\alpha=0, 1, 2, 3$.
Since also $f_{(x_\alpha)}$ is {\rm CRF}, there is a smooth
extension $F^{(x_\alpha)}$ of $f_{(x_\alpha)}$ such
that $\overline{\mathfrak D}F^{(x_\alpha)}=\rho\eta^{(x_\alpha)}$
with $\eta^{(x_\alpha)}$ smooth, whence
\begin{equation}\label{ag12}
\partial G/{\partial x_\alpha}-F^{(x_\alpha)}=\rho\psi^{(x_\alpha)}
\end{equation}
with $\psi^{(x_\alpha)}$ smooth, $\alpha=0, 1, 2, 3$.

Applying $\overline{\mathfrak D}$ to~\ref{ag12}, and taking into account
that $\overline{\mathfrak D}\circ(\partial/\partial x_{\alpha})=(\partial/\partial x_{\alpha})\circ\overline{\mathfrak D}$,
we obtain
\begin{equation}\label{ag13}
\frac{\partial(\overline{\mathfrak D}G)}{\partial x_{\alpha}}
=\rho H^{(x_\alpha)}+ \overline{\mathfrak D}\rho\cdot\psi^{(x_\alpha)}
\end{equation}
with $H^{(x_\alpha)}$ smooth, $\alpha=0, 1, 2, 3$.

In the same way,
\begin{equation}\label{ag14}
\frac{\partial(\overline{\mathfrak D}G)}{\partial y_{\alpha}}
=\rho H^{(y_\alpha)}+ \overline{\mathfrak D}\rho\cdot\psi^{(y_\alpha)}
\end{equation}
with $H^{(y_\alpha)}$ smooth, $\alpha=0, 1, 2, 3$.

Let
$$
\nu=\nabla \rho/\vert\nabla \rho\vert=\vert\nabla \rho\vert^{-1}\sum_{\alpha=0}^3\Big(\frac{\partial\rho}{\partial x_{\alpha}}\frac{\partial}{\partial x_{\alpha}}+\frac{\partial\rho}{\partial y_{\alpha}}\frac{\partial}{\partial y_{\alpha}}\Big).
$$

By hypothesis, $\overline{\mathfrak D} G=\rho\psi$, so
\begin{equation}\label{ag15}
\frac{\partial(\overline{\mathfrak D}G)}{\partial \nu}
=\frac{\partial\rho}{\partial \nu}\psi+\rho\frac{\partial\psi}{\partial\nu}=\vert\nabla\rho\vert\psi+\rho\frac{\partial\psi}{\partial\nu}
\end{equation}
On the other hand, from~\eqref{ag13},~\eqref{ag14}, we derive
\begin{equation}\label{ag16}
\frac{\partial(\overline{\mathfrak D}G)}{\partial \nu}
=\vert\nabla \rho\vert^{-1}\Big\{\rho\sum_{\alpha=0}^3 A_\alpha+\overline{\mathfrak D}\rho\cdot\sum_{\alpha=0}^3 B_\alpha\Big\}.
\end{equation}
Equalizing~\eqref{ag15} and~\eqref{ag16}, we get
\begin{equation}\label{ag17}
\psi=\rho\Phi+2\overline{\mathfrak D}\rho\cdot\Theta
\end{equation}
and consequently
$$
\overline{\mathfrak D}G=\rho\psi=\rho^2\Phi+\overline{\mathfrak D}\rho^2\cdot\Theta.
$$
Then, $F:=G-\rho^2\Theta$ is the desired extension of $f$.
\end{proof}

\begin{lem}\label{ag18}
Let $U$ be a domain in $\HH^2$, $S=\{\rho=0\}$ where $\rho:U\to\HH$ is
smooth and $\nabla\rho\neq0$ on $S$.
Let $\{h_k\}_k\in\NN$ be a sequence of smooth functions $S\to\HH$.
Then, there exists a smooth function $E:U\to\HH$ with the following properties
\begin{itemize}
\item[1)] $E|_S=h_0$;
\item[2)] $\frac{\partial^k E}{\partial\rho^k}\big|_S=h_k$, \ $\forall k\ge 1$.
\end{itemize}
\end{lem}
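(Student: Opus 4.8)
The plan is to recognize this as a parametrized Borel (Whitney) extension statement and to reduce it, via adapted coordinates, to the model problem of prescribing all transverse Taylor coefficients along a flat hypersurface. First I would make the symbol $\partial/\partial\rho$ unambiguous by introducing a globally defined transverse field. Since $\nabla\rho\neq 0$ on $S$, it is nonzero on a neighborhood of $S$, and there the Euclidean-gradient field
\[
N=\frac{\nabla\rho}{\lvert\nabla\rho\rvert^{2}}
\]
is smooth and satisfies $N\rho=1$, so $N$ is transverse to every level set of $\rho$. I read $\partial/\partial\rho$ as differentiation along $N$, so that the two requirements become $E|_S=h_0$ and $N^{k}E|_S=h_k$ for $k\ge 1$. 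Working with a single global field $N$ is the feature that will make the final gluing painless, since it gives an unambiguous notion of the \emph{$\rho$-jet} of a function along $S$.

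Next I would solve the problem locally. Around each point of $S$, since $N\rho=1$, the flow of $N$ straightens the picture: there are coordinates $(s,t)$ on a chart $U_i$, with $s\in\RR^{7}$ tangential and $t=\rho$, in which $S=\{t=0\}$ and $N=\partial/\partial t$. Writing $\HH\cong\RR^{4}$ and arguing componentwise, the task is to find a smooth $E_i(s,t)$ with $\partial_t^{k}E_i(s,0)=h_k(s)$ for all $k$. This is exactly the Borel theorem with parameters, solved by the classical series
\[
E_i(s,t)=\sum_{k\ge 0}\frac{h_k(s)}{k!}\,t^{k}\,\chi(\lambda_k t),
\]
where $\chi$ is a cutoff equal to $1$ near $0$ and supported in $[-1,1]$, and the constants $\lambda_k\to+\infty$ are chosen large enough that the series together with all its mixed $(s,t)$-derivatives converges uniformly on compact subsets of $U_i$.

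Finally I would globalize by a partition of unity, exploiting that all local solutions share \emph{the same} $\rho$-jet along $S$ because $N$ is global: on overlaps, $E_i$ and $E_j$ both satisfy $N^{k}(\,\cdot\,)|_S=h_k$, so $N^{k}(E_i-E_j)|_S=0$ for all $k$, whence (by equality of mixed partials in straightening coordinates) $E_i-E_j$ vanishes to infinite order on $S$. Choosing $\{\varphi_i\}$ subordinate to $\{U_i\}$, set $E=\sum_i\varphi_i E_i$ on a neighborhood of $S$ and then multiply by a bump function equal to $1$ near $S$ to extend $E$ to all of $U$ without altering its values or jet on $S$. For $p\in S$ lying in $U_{i_0}$, the difference $E-E_{i_0}=\sum_i\varphi_i(E_i-E_{i_0})$ is a locally finite sum of functions each flat on $S$, hence itself flat on $S$ at $p$; therefore $E|_S=h_0$ and $N^{k}E|_S=h_k$, as required.

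I expect the only genuine work to lie in the second step: selecting the $\lambda_k$ so that the Borel series is jointly smooth in the tangential parameters $s$ and the transverse variable $t$, i.e.\ so that every mixed derivative $\partial_s^{\beta}\partial_t^{m}$ of the tail converges on compacta. The straightening in the first step and the gluing in the third are formal once $N$ is fixed globally; the real analytic content is this parametrized convergence, which is standard but is where the estimates must be carried out with care.
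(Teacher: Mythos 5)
Your proof is correct, but note that the paper does not actually prove this lemma at all: it simply remarks that the statement ``is a straightforward generalization of \cite[Proposition~2.2]{AnH}'' and moves on. What you have written is, in effect, the argument that the paper delegates to that reference, namely the classical Borel--Whitney construction with parameters: fix the transverse field $N=\nabla\rho/\lvert\nabla\rho\rvert^{2}$ with $N\rho=1$, straighten by its flow so that the problem becomes prescribing $\partial_t^{k}E(s,0)=h_k(s)$ along $\{t=0\}$ with $t=\rho$, sum the cutoff series $\sum_{k}\frac{h_k(s)}{k!}t^{k}\chi(\lambda_k t)$ with $\lambda_k\to\infty$ chosen by the usual exhaustion/diagonal estimates, and glue local solutions by a partition of unity, using that any two of them differ by a function flat on $S$ (which, as you say, follows because $N$-flatness plus tangential differentiation in straightening coordinates gives flatness of all mixed partials). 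Each of these steps is sound, and your identification of the parametrized convergence of the Borel series as the only genuinely technical point is accurate: on the support of $\chi(\lambda_k t)$ one has $\lvert t\rvert\le 1/\lambda_k$, so every fixed-order derivative of the $k$-th term is controlled by sup-norms of $\partial_s^{\beta}h_k$ on compacta times a negative power of $\lambda_k$, and a diagonal choice of $\lambda_k$ closes the argument. Two further points in your favor: the statement's $\rho:U\to\HH$ must be read as $\rho:U\to\RR$ (a typo; compare Proposition~\ref{ag19}), which is what you tacitly assume; and the symbol $\partial/\partial\rho$ is not defined in the statement, so your normalization $N\rho=1$ is a needed clarification, and it is the one consistent with how the lemma is used later (in the proof of Proposition~\ref{ag24} the authors work in local coordinates in which $\rho$ is the first coordinate, which is exactly your straightening). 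So your route is not different in substance from the one the paper implicitly relies on, but unlike the paper it is self-contained.
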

This lemma is a straightforward generalization of~\cite[Proposition~2.2]{AnH}.
\begin{pro}\label{ag19}
Let $U$ be a domain in $\HH^2$, $S=\{\rho=0\}$ where $\rho:U\to\RR$ is
smooth and $\nabla\rho\neq0$ on $S$.
Let $f:S\to\HH$ be a smooth and admissible function.
Then there are a smooth function $F:U\to\HH$ and two
sequences $\{\alpha_k\}_{k\ge 2}$, $\{\beta_k\}_{k\ge 2}$ of smooth
functions $U\to\HH$ and $U\to\HH^2$, respectively,
satisfying the following conditions:
\begin{itemize}
\item[1)] $F|_S=f$;
\item[2)] $({\partial^k\alpha_m}/{\partial\rho^k})|_S=0$, $\forall k\ge1,m\ge2$;
\item[3)] $\overline{\mathfrak D}\big(F-\sum_{k=2}^m(\rho^k/k)\alpha_k\big)
=\rho^m\beta_m$, \ $\forall m\ge 2$.
\end{itemize}
\end{pro}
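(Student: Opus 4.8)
The plan is to construct the $\alpha_k$ and $\beta_k$ recursively in $m$, raising at each step the order of vanishing of $\overline{\mathfrak D}$ applied to a corrected extension of $f$. For the base case I invoke Proposition~\ref{ag11}, which gives a smooth extension $F$ of $f$ with $\overline{\mathfrak D}F=\rho^2u$, $u$ smooth; I set $\alpha_2:=0$ and $\beta_2:=u$, so that 1)--3) hold for $m=2$. Writing $G_m:=F-\sum_{k=2}^m(\rho^k/k)\alpha_k$, I assume inductively that $\overline{\mathfrak D}G_m=\rho^m\beta_m$ and set $G_{m+1}:=G_m-(\rho^{m+1}/(m+1))\alpha_{m+1}$. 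Since $\rho$ is real, the product rule gives $\overline{\mathfrak D}(\rho^{m+1}\alpha_{m+1})=(m+1)\rho^m\,\overline{\mathfrak D}\rho\cdot\alpha_{m+1}+\rho^{m+1}\overline{\mathfrak D}\alpha_{m+1}$, whence
\[
\overline{\mathfrak D}G_{m+1}=\rho^m\big(\beta_m-\overline{\mathfrak D}\rho\cdot\alpha_{m+1}\big)-\tfrac{1}{m+1}\,\rho^{m+1}\overline{\mathfrak D}\alpha_{m+1}.
\]
Everything thus reduces to choosing $\alpha_{m+1}$ so that $\big(\beta_m-\overline{\mathfrak D}\rho\cdot\alpha_{m+1}\big)\big|_S=0$: by Hadamard's lemma the first bracket is then $\rho\,\gamma_{m+1}$ with $\gamma_{m+1}$ smooth, and I put $\beta_{m+1}:=\gamma_{m+1}-(m+1)^{-1}\overline{\mathfrak D}\alpha_{m+1}$.

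Next I analyse the pointwise condition on $S$. A short computation with $\nu=\nabla\rho/|\nabla\rho|=(\nu_1,\nu_2)$ gives $\partial\rho/\partial\overline q_s=|\nabla\rho|\,\nu_s$, i.e.\ $\overline{\mathfrak D}\rho=|\nabla\rho|\,\nu$, so the required identity reads $|\nabla\rho|\,\nu_s\,\alpha_{m+1}|_S=\beta_m^s|_S$ for $s=1,2$, an overdetermined left-multiplication system for the single quaternion $\alpha_{m+1}|_S$, where $\beta_m=(\beta_m^1,\beta_m^2)$. Multiplying the two equations on the left by $\overline\nu_1,\overline\nu_2$ and adding (using $|\nu_1|^2+|\nu_2|^2=1$) isolates the only candidate $\alpha_{m+1}|_S=|\nabla\rho|^{-1}\langle\nu,\beta_m\rangle|_S=|\nabla\rho|^{-1}\big(\overline\nu_1\beta_m^1+\overline\nu_2\beta_m^2\big)|_S$, a smooth function on $S$. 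This $\alpha_{m+1}|_S$ actually solves the system precisely when the compatibility $\nu_1\overline\nu_2\,\beta_m^2=|\nu_2|^2\,\beta_m^1$ holds on $S$ (equivalently $\beta_m^2=\nu_2\nu_1^{-1}\beta_m^1$ where $\nu_1\neq0$).

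The heart of the proof, and the step I expect to be the main obstacle, is verifying this compatibility; this is where the second-order character of the problem enters. Since $\rho^m\beta_m=\overline{\mathfrak D}G_m$ lies in the image of $\overline{\mathfrak D}$, the integrability identity~\eqref{cr3} gives $\overline P(\rho^m\beta_m)\equiv0$ on $U$, in particular $\overline P_1(\rho^m\beta_m)\equiv0$. As $\overline P_1$ is second order, the Taylor expansion of $\overline P_1(\rho^m\beta_m)$ in $\rho$ begins at $\rho^{m-2}$, and I extract this lowest coefficient: using $\partial\rho/\partial\overline q_1=|\nabla\rho|\nu_1$, $\partial\rho/\partial q_2=|\nabla\rho|\overline\nu_2$ and $\Delta_2(\rho^m\,\cdot\,)=m(m-1)\rho^{m-2}|\nabla\rho|^2|\nu_2|^2(\cdot)+O(\rho^{m-1})$, it equals $m(m-1)|\nabla\rho|^2\big(\nu_1\overline\nu_2\beta_m^2-|\nu_2|^2\beta_m^1\big)$. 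Since $m\ge2$, its vanishing on $S$ yields exactly the compatibility above; the symmetric coefficient coming from $\overline P_2$ covers the points where $\nu_1=0$ (forcing $\beta_m^1=0$ there). Hence $\alpha_{m+1}|_S$ is well defined and solves the system, so $\big(\beta_m-\overline{\mathfrak D}\rho\cdot\alpha_{m+1}\big)\big|_S=0$ as needed.

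Finally, I extend $\alpha_{m+1}|_S$ to a smooth $\alpha_{m+1}:U\to\HH$ by Lemma~\ref{ag18}, prescribing the value $\alpha_{m+1}|_S$ on $S$ and all normal derivatives equal to $0$; this gives 2) for $m+1$, while the construction above gives 3), and $\beta_{m+1}$ is $\HH^2$-valued and smooth. This closes the induction and produces the required sequences $\{\alpha_k\}_{k\ge2}$, $\{\beta_k\}_{k\ge2}$ together with the single smooth function $F$ of Proposition~\ref{ag11}. The only genuinely nontrivial ingredient is the compatibility of the overdetermined system, forced by $\overline P\circ\overline{\mathfrak D}=0$ read off at lowest order in $\rho$; the remaining pieces (the product rule for $\overline{\mathfrak D}$, Hadamard division by $\rho$, and the Whitney-type extension of Lemma~\ref{ag18}) are routine.
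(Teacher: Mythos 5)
Your proof is correct and follows essentially the same route as the paper's: the same induction with base case from Proposition~\ref{ag11}, the same key step of applying the integrability conditions~\eqref{cr2}--\eqref{cr3} to $\rho^m\beta_m$ and extracting the coefficient of $\rho^{m-2}$ to conclude that $\beta_m$ agrees with $\overline{\mathfrak D}\rho\cdot g$ modulo $\rho$ (your candidate $|\nabla\rho|^{-1}\big(\overline\nu_1\beta_m^1+\overline\nu_2\beta_m^2\big)$ is exactly the paper's $g=|\nabla\rho|^{-2}\sum_s\frac{\partial\rho}{\partial q_s}\zeta_s$ restricted to $S$), and the same use of Lemma~\ref{ag18} to produce a flat correction $\alpha_{m+1}$. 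The only immaterial difference is bookkeeping: the paper sums the compatibility identities over $s$ to get the decomposition $\beta_m=\overline{\mathfrak D}\rho\cdot g+\rho\gamma$ on a whole neighborhood of $S$, whereas you solve the overdetermined system only on $S$ and then factor out $\rho$ via Hadamard's lemma.
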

\begin{proof}
Since $f$ is admissible, by Proposition~\ref{ag11} there is a smooth
extension $F:U\to\HH$ of $f$ such that $\overline{\mathfrak D}F=\rho^2\sigma$.
We construct the sequences by recurrence assuming $\alpha_2=0$, $\beta_2=\sigma$
in such a way second and third conditions of the proposition are
satisfied for $m=2$.

Suppose that $\alpha_2,\ldots,\alpha_m$, $\beta_2,\ldots,\beta_m$ are already constructed
in such a way that the above conditions~2) and~3) are satisfied for
all integers $s\le m$, $k\ge 1$,
in order to define $\alpha_{m+1}$ and $\beta_{m+1}$.

Set
$$
G=F-\sum_{k=2}^m(\rho^k/k)\alpha_k, \qquad
\beta_m=(\zeta_1,\zeta_2)\in\HH^2.
$$
By definition, $\partial G/\partial\overline q_h=\rho^m\zeta_h$, $h=1, 2$,
hence $\overline{\mathfrak D} G$ satisfies the condition~\eqref{cr2}, that is
$$
\frac{\partial}{\partial \overline q_s}\frac{\partial}{\partial q_s}\frac{\partial G}{\partial \overline q_h}=
\frac{\partial}{\partial \overline q_h}\frac{\partial }{\partial q_s}\frac{\partial G}{\partial \overline q_s},
$$
which gives
$$
\frac{\partial}{\partial \overline q_s}\Big(m\rho^{m-1}\frac{\partial\rho}{\partial q_s}\zeta_h+\rho^m\frac{\partial\zeta_h}{\partial q_s}\Big)=
\frac{\partial}{\partial \overline q_h}\Big(m\rho^{m-1}\frac{\partial\rho}{\partial q_s}\zeta_s+\rho^m\frac{\partial\zeta_s}{\partial q_s}\Big).
$$
Taking into account that $\rho$ is real and $m\ge2$ we get
\begin{equation}
  \label{ag20}
  \begin{aligned}
  &m(m-1)\rho^{m-2}\frac{\partial\rho}{\partial\overline q_s}\frac{\partial\rho}{\partial q_s}
  \zeta_h+m\rho^{m-1}\frac{\partial}{\partial\overline q_s}\Big(\frac{\partial\rho}{\partial q_s}\zeta_h
  \Big)+m\rho^{m-1}\frac{\partial\rho}{\partial\overline q_s}\frac{\partial\zeta_h}{\partial q_s}+
  \rho^m\frac{\partial}{\partial\overline q_s}\Big(\frac{\partial\zeta_h}{\partial q_s}\Big)
  =\\
  &m(m-1)\rho^{m-2}\frac{\partial\rho}{\partial\overline q_h}\frac{\partial\rho}{\partial q_s}\zeta_s+
  m\rho^{m-1}\frac{\partial}{\partial\overline q_h}\Big(\frac{\partial\rho}{\partial q_s}\zeta_s\Big)+
  m\rho^{m-1}\frac{\partial\rho}{\partial\overline q_h}\frac{\partial\zeta_s}{\partial q_s}
  +\rho^m\frac{\partial}{\partial\overline q_h}\Big(\frac{\partial\zeta_s}{\partial q_s}\Big).
  \end{aligned}
\end{equation}
Summing with respect to $s$ the above equalities and
dividing by $m(m-1)$, for fixed $h=1, 2$ we get
\begin{equation}\label{ag21a}
\rho^{m-2}\vert\nabla\rho\vert^2\zeta_h
=\rho^{m-2}\frac{\partial\rho}{\partial\overline q_h}\Big(\sum_{s=1}^2\frac{\partial\rho}{\partial q_s}\zeta_s\Big)+
\rho^{m-1}l_h
\end{equation}
with $l_h\in{\rm C}^\infty(U)$
$h=1, 2$
whence
\begin{equation}\label{ag21b}
\vert\nabla\rho\vert^2\zeta_h
=\frac{\partial\rho}{\partial\overline q_h}\Big(\sum_{s=1}^2\frac{\partial\rho}{\partial q_s}\zeta_s\Big)+
\rho l_h
\end{equation}
$h=1, 2$.
Since $\nabla\rho\neq 0$ on $S$, on an open neighborhood $V\subset U$ of $S$ we have
$$
\zeta_h=\frac{\partial\rho}{\partial\overline q_h}g+\rho\gamma_h
$$
$h=1, 2$, with $g,\gamma_h\in{\rm C}^\infty(V)$.

Setting $\gamma=(\gamma_1,\gamma_2)$, and recalling that $\beta_m=(\zeta_1,\zeta_2)$, on $V$
we have $\beta_m=(\overline{\mathfrak D}\rho) g+\rho\gamma$,
so, by the beginning assumption, we derive
\begin{equation}
  \label{ag22}
  \begin{split}
  \overline{\mathfrak D}\Big(F-\sum_{k=2}^m(\rho^k/k)\alpha_k\Big)
  &= \rho^m\beta_m=\rho^m\overline{\mathfrak D}\rho\cdot g+\rho^{m+1}\gamma\\
  &= \overline{\mathfrak D}\big(\rho^{m+1}g/(m+1)\big)-
     \frac{\rho^{m+1}}{m+1}\overline{\mathfrak D}g+\rho^{m+1}\gamma.
  \end{split}
\end{equation}
With
$$
\theta=\gamma-\overline{\mathfrak D} g/(m+1)
$$
equation~\eqref{ag22} rewrites
\begin{equation}\label{ag23}
\overline{\mathfrak D}\Big(F-\sum_{k=2}^m(\rho^k/k)\alpha_k-\rho^{m+1}g/(m+1)\Big)=\rho^{m+1}\theta.
\end{equation}
Observe that $g$ and $\theta$ can be chosen in such a way that an
equality like~\eqref{ag23} holds on $U$.
(It is enough to consider a closed neighborhood $V'\subset V$ of $S$, a smooth
extension of $g|_{V'}$ to $U$ and take $\theta$ according to~\eqref{ag23}).

By Lemma~\ref{ag18}, there exists a smooth function $\alpha_{m+1} : U\to\HH$, such
that $\alpha_{m+1}|_S=g|_S$, ${\partial^k\alpha_{m+1}}/{\partial \rho^k}|_S=0$ for
every $k\ge1$.
Then, $\alpha_{m+1}-g=\rho\varepsilon$ with $\varepsilon:U\to\HH$ smooth and, consequently,
\[
\begin{split}
-\overline{\mathfrak D}\Big(\frac{\rho^{m+1}g}{m+1}\Big)
&= \overline{\mathfrak D}\Big(\frac{\rho^{m+2}\varepsilon}{m+1}\Big)
-\overline{\mathfrak D}\Big({\frac{\rho^{m+1}}{m+1}}\alpha_{m+1}\Big) =\\
&
-\overline{\mathfrak D}\Big({\frac{\rho^{m+1}}{m+1}}\alpha_{m+1}\Big) +
  \frac{\rho^{m+1}}{m+1}\left((m+2)\overline{\mathfrak D}\rho\cdot\varepsilon
  +\rho\overline{\mathfrak D}\varepsilon\right).
\end{split}
\]
If we define
$$
\zeta_{m+1}=
\theta -\frac{1}{m+1}\Big((m+2)\overline{\mathfrak D}\rho\cdot\varepsilon+\rho\overline{\mathfrak D}\varepsilon\Big)
$$
$\alpha_{m+1}$ and $\zeta_{m+1}$ satisfy conditions 2) and 3) of the proposition for $m+1$.
\end{proof}

Let $U$, $\rho$, $S$ be as in Proposition~\ref{ag19} and
let $G:U\to\HH^r$ be a smooth map.
We say that $G$ \emph{vanishes of infinite order} on $S$ or
that $G$ is \emph{flat} on $S$ if, for any integer $k$,
$$
\lim\limits_{\rho\to 0}{G}/{\rho^k} = 0
$$
uniformly on the compact sets of $U$.
\begin{pro}\label{ag24}
With $U, \rho, S$ as above, let $f:S\to\HH$ be a smooth admissible function.
Then, there exists a smooth extension $G$ of $f$ to $U$ such
that $\overline{\mathfrak D}G$ is flat on $S$.
\end{pro}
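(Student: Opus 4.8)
The plan is to \emph{Borel-sum} the formal series produced by Proposition~\ref{ag19} and to realize the sum by a single smooth function via the extension Lemma~\ref{ag18}. First I would invoke Proposition~\ref{ag19} to obtain a smooth extension $F$ of $f$, together with the sequences $\{\alpha_k\}_{k\ge2}$, $\{\beta_k\}_{k\ge2}$, and form the partial sums
\[
G_m:=F-\sum_{k=2}^m(\rho^k/k)\alpha_k,\qquad m\ge2,
\]
so that, by condition~3) of that proposition, $\overline{\mathfrak D}G_m=\rho^m\beta_m$ vanishes to order $m$ on $S$.

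Next I would observe that the normal jets of the $G_m$ along $S$ stabilize. Indeed $G_{m+1}-G_m=-(\rho^{m+1}/(m+1))\alpha_{m+1}$ vanishes to order $m+1$ on $S$, so for each fixed $k$ the normal derivative $\big(\partial^kG_m/\partial\rho^k\big)\big|_S$ is independent of $m$ as soon as $m\ge k$; call this common value $h_k\in{\rm C}^\infty(S,\HH)$. Applying Lemma~\ref{ag18} to the sequence $\{h_k\}_{k\ge0}$ yields a single smooth function $G:U\to\HH$ with $G|_S=h_0$ and $\big(\partial^kG/\partial\rho^k\big)\big|_S=h_k$ for every $k\ge1$. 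By construction $\big(\partial^k(G-G_m)/\partial\rho^k\big)\big|_S=0$ for all $0\le k\le m$, and a Taylor expansion with integral remainder in the normal variable $\rho$ (legitimate since $\nabla\rho\neq0$ near $S$) then shows $G-G_m=\rho^{m+1}\psi_m$ with $\psi_m$ smooth, i.e.~$G-G_m$ vanishes to order $m+1$ on $S$.

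The conclusion would then be immediate. Writing $\overline{\mathfrak D}G=\overline{\mathfrak D}G_m+\overline{\mathfrak D}(G-G_m)$, the first summand equals $\rho^m\beta_m$ and so vanishes to order $m$, while the second equals $\overline{\mathfrak D}(\rho^{m+1}\psi_m)$ and therefore vanishes to order $m$ as well, since $\overline{\mathfrak D}$ is a first-order operator and differentiating $\rho^{m+1}\psi_m$ only lowers the order by one. As $m$ is arbitrary, $\overline{\mathfrak D}G$ vanishes to order $m$ for every $m$, which is exactly the flatness of $\overline{\mathfrak D}G$ on $S$. Moreover $G|_S=h_0=G_m|_S=F|_S=f$, because every term $(\rho^k/k)\alpha_k$ with $k\ge2$ restricts to $0$ on $S$, so $G$ is the desired extension.

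I expect the main obstacle to be the bookkeeping in the middle step: one must check that the normal jets genuinely stabilize (this rests on $G_{m+1}-G_m$ vanishing to order $m+1$, which in turn uses that $\rho$ is real so $\overline{\mathfrak D}$ behaves well under multiplication by powers of $\rho$), that the resulting $h_k$ are well-defined smooth functions on $S$ so that Lemma~\ref{ag18} applies, and, most delicately, that vanishing of all normal derivatives up to order $m$ truly forces the clean factorization $G-G_m=\rho^{m+1}\psi_m$ rather than mere vanishing to order $m$. Once this factorization is in hand, the drop of exactly one order under the first-order operator $\overline{\mathfrak D}$ is routine.
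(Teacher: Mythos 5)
Your proof is correct, and it rests on the same two pillars as the paper's own argument --- Proposition~\ref{ag19} for the formal series and Lemma~\ref{ag18} for the Borel-type summation --- but you assemble them differently, and more economically. The paper applies Lemma~\ref{ag18} to the sequence $\{\tfrac{k!}{k+1}\alpha_{k+1}|_S\}$ to build an auxiliary function $E$, sets $T=\rho E$ and $G=F-T$, and then verifies flatness of $\overline{\mathfrak D}G$ by a three-term telescoping in local charts, comparing $T$ and $\sum_{k}(\rho^k/k)\alpha_k$ with $\sum_{k}(\rho^k/k)(\alpha_k\circ\pi)$, where $\pi$ is a local projection onto $S$; that is precisely where condition~2) of Proposition~\ref{ag19} (normal flatness of the $\alpha_j$) is consumed. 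You instead prescribe the jets of $G$ itself, as the stabilized normal jets $h_k$ of the partial sums $G_m=F-\sum_{k=2}^m(\rho^k/k)\alpha_k$: the stabilization needs only that $G_{m+1}-G_m=-(\rho^{m+1}/(m+1))\alpha_{m+1}$ vanishes to order $m+1$, so you never invoke condition~2) of Proposition~\ref{ag19} nor any projection $\pi$, and the key factorization $G-G_m=\rho^{m+1}\psi_m$ is Taylor's formula with integral remainder in the normal variable, exactly as you indicate. Your route buys a shorter verification with fewer auxiliary objects; the paper's route exhibits the correction term explicitly as $\rho E$, with $E$ normalized to be flat in the normal directions, and makes visible where the full strength of Proposition~\ref{ag19} does (or does not) enter. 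Two points deserve to be made explicit, though they affect the paper's proof equally: the operator $\partial^k/\partial\rho^k$ must be fixed once and for all (e.g.\ via a tubular neighborhood of $S$, which is implicit in Lemma~\ref{ag18}), since otherwise the jets $h_k$ are not well defined; and the local factorizations $G-G_m=\rho^{m+1}\psi_m$ glue to a global one on $U$ because off $S$ the quotient $(G-G_m)/\rho^{m+1}$ is smooth and uniquely determined, and $\{\rho\neq 0\}$ is dense near $S$.
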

\begin{proof}
By Proposition~\ref{ag19}, there exist smooth
functions $F:U\to\HH$, $\alpha_j:U\to\HH$, $\beta_j:U\to\HH^2$, ($j\ge 2$) such that
\begin{itemize}
\item $F|_S=f$;
\item (${\partial^k\alpha_j}/{\partial\rho^k})|_S=0$, $\forall k\ge1,j\ge2$;
\item $\overline{\mathfrak D}\big(F-\sum_{k=2}^m(\rho^k/k)\alpha_k\big)=\rho^m\beta_m$, $\forall m\ge 2$.
\end{itemize}
Moreover, by Lemma~\ref{ag18}, there exists a smooth
function $E:U\to\HH$ such that $E|_S=0$ and
$$
\frac{\partial^k E}{\partial\rho^k}\Big|_S={\frac{k!}{k+1}}{\alpha_{k+1}}|_S
$$
for all $k\ge 1$.

Let $T=\rho E$. Then, since $T|_S=0$ and
$$
\frac{\partial^kT}{\partial\rho^k}
=k\frac{\partial^{k-1}E}{\partial\rho^{k-1}}+\rho\frac{\partial^k E}{\partial\rho^k}
$$
for $k\ge 1$ in a neighborhood of $S$, we get
$$
{\alpha_k}|_S
=\frac{k}{(k-1)!}\frac{\partial^{k-1}E}{\partial\rho^{k-1}}\Big|_S
=\frac{1}{(k-1)!}\frac{\partial^k T}{\partial\rho^k}\Big|_S
$$
for all $k\ge 2$ and $\frac{\partial T}{\partial\rho}|_S=E|_S=0$.

Now, fix a point $p$ of $S$ and let $W_p$ be a neighborhood of $p$
where $\rho$ is one of the real coordinates, say the first,
and denote $\xi_1,\ldots,\xi_7$ the remaining.
Let $\pi:W_p\to W_p\cap S$ denote the
projection $(\rho,\xi_1,\ldots,\xi_7)\to (0,\xi_1,\ldots,\xi_7)$.
By what is preceding, we deduce that in $W_p$,
for all $m\ge 2$, the following holds true
$$
T-\sum_{k=2}^m\frac{\rho^k}{k}(\alpha_k\circ\pi)=
T-\sum_{k=0}^m\frac{\rho^k}{k!}\left(\frac{\partial^k T}{\partial\rho^k}\circ\pi\right)=\rho^{m+1}\zeta
$$
with $\zeta:W_p\to\HH$ smooth. Consequently,
\begin{equation}\label{ag25}
\overline{\mathfrak D}\left(T-\sum_{k=2}^m\frac{\rho^k}{k}(\alpha_k\circ\pi)\right)
=\rho^m v
\end{equation}
with $v:W_p\to\HH^2$ smooth. Moreover, since
$({\partial^k\alpha_j}/{\partial\rho^k})|_S=0, \forall k\ge1,j\ge2$, we
get
$$
\sum_{k=2}^m\frac{\rho^k}{k}\big(\alpha_k-\alpha_k\circ\pi\big)=\rho^{m+1}\theta,
$$
$\theta:W_p\to\HH$ smooth.
It follows that
\begin{equation}\label{ag26}
\overline{\mathfrak D}\left(\sum_{k=2}^m(\rho^k/k)\alpha_k-\sum_{k=2}^m(\rho^k/k)(\alpha_k\circ\pi)\right)
=\rho^m u,
\end{equation}
where $u:W_p\to\HH^2$ is smooth.

Finally, we define $G=F-T$.
Clearly $G|_S=f$ and by~\eqref{ag25},~\eqref{ag26}) we get
$$
\begin{aligned}
\overline{\mathfrak D}G
&=\phantom{-}\overline{\mathfrak D}\Big(F-\sum_{k=2}^m(\rho^k/k)\alpha_k\Big)
+\overline{\mathfrak D}\Big(\sum_{k=2}^m(\rho^k/k)\alpha_k-\sum_{k=2}^m(\rho^k/k)(\alpha_k\circ\pi)\Big)\\
&\phantom{=}\,-\overline{\mathfrak D}\Big(T-\sum_{k=2}^m(\rho^k/k)(\alpha_k\circ\pi)\Big)\\
&=\rho^m\big(\beta_m+u-v\big)=\rho^mw_p.
\end{aligned}
$$
Here $w_p:W_p\to\HH^2$ is smooth and uniquely determined by the
condition $\overline{\mathfrak D}G=\rho^mw_p$.
If $p\notin S$, we take $W_p$ such that $W_p\cap S=\varnothing$
and $w_p=\overline{\mathfrak D}G/\rho^m$.
Therefore, the family of the local maps $w_p$ defines a smooth 
map $w_m:U\to\HH^2$ such that $\overline{\mathfrak D}G=\rho^m w_m$ for every
integer $m\ge 2$, i.e.~$G$ is a smooth extension of $f$ to $U$ such
that $\overline{\mathfrak D}G$ is flat on $S$.

This proves Proposition~\ref{ag24}.
\end{proof}
We apply Proposition~\ref{ag24} in order to prove that the local
Riemann-Hilbert problem is always solvable.
This will follow from the following

\begin{teo}\label{RH}
Let $\Omega\subset\HH^2$ be a convex domain and $(S,U^+,U^-)$ a domain splitting of $\Omega$.
Let $f:S\to \HH$ a smooth admissible function.
Then, $f$ is a smooth jump.
\end{teo}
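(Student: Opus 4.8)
The plan is to reduce the construction of the jump to solving a single inhomogeneous Cauchy--Riemann--Fueter system on the convex domain $\Omega$, imitating the classical complex jump construction but using the \emph{flat} extension produced by Proposition~\ref{ag24} to make an extension-by-zero smooth across $S$. Since $(S,U^+,U^-)$ is a domain splitting, $S$ separates $U^+$ from $U^-$ and is two-sided in $\Omega$; hence there is a global smooth defining function $\rho\colon\Omega\to\RR$ with $\{\rho=0\}=S$, $\nabla\rho\neq0$ on $S$, and $\rho>0$ on $U^+$, $\rho<0$ on $U^-$ (built from consistently signed local defining functions via a partition of unity). With this $\rho$ I apply Proposition~\ref{ag24} (with $U=\Omega$) to obtain a smooth extension $G\colon\Omega\to\HH$ of $f$ such that $g:=\overline{\mathfrak D}G$ is flat on $S$.

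Next I define the modified datum $g^+\colon\Omega\to\HH^2$ by $g^+=g$ on $U^+$ and $g^+=0$ on $U^-\cup S$. Because $g$ vanishes to infinite order along $S$, its extension by zero across $S$ is smooth, so $g^+\in C^\infty(\Omega,\HH^2)$. I then verify the compatibility condition $\overline P(g^+)=0$: on $U^+$ one has $g^+=\overline{\mathfrak D}G$, so $\overline P(g^+)=\overline P(\overline{\mathfrak D}G)=0$ by~\eqref{cr3}; on $U^-$ one has $g^+=0$, so again $\overline P(g^+)=0$. As $\overline P$ is a differential operator and $g^+$ is smooth, $\overline P(g^+)$ is continuous and vanishes on the dense open set $U^+\cup U^-=\Omega\setminus S$, hence $\overline P(g^+)\equiv0$ on $\Omega$.

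Now I invoke the global solvability of the CRF system on convex domains from \cite{ABLSS}: since $\Omega$ is convex, $n=2$, $g^+\in C^\infty(\Omega,\HH^2)$ and $\overline P(g^+)=0$, there is a smooth $u\colon\Omega\to\HH$ with $\overline{\mathfrak D}u=g^+$. Finally I set
\[
F^+:=(G-u)\big|_{U^+},\qquad F^-:=(-u)\big|_{U^-}.
\]
Both are restrictions of functions smooth on all of $\Omega$, hence smooth up to $S$. On $U^+$ we have $\overline{\mathfrak D}F^+=g-g^+=0$ and on $U^-$ we have $\overline{\mathfrak D}F^-=-g^+=0$, so $F^\pm$ are $\HH$-holomorphic; moreover $F^+|_S-F^-|_S=(G-u)|_S-(-u)|_S=G|_S=f$, exhibiting $f$ as a smooth jump.

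The genuine content is concentrated in the two external inputs. Proposition~\ref{ag24} is what replaces $g$ by something flat on $S$; without that flatness the extension-by-zero $g^+$ would fail to be smooth and the whole scheme would collapse. The second load-bearing step is the appeal to \cite{ABLSS}: convexity of $\Omega$ together with the compatibility $\overline P(g^+)=0$ are precisely the hypotheses under which $\overline{\mathfrak D}u=g^+$ is globally solvable, and this is exactly where the convexity assumption of the statement enters. The remaining verifications (smoothness of $g^+$ from flatness, vanishing of $\overline P(g^+)$ by density, and the bookkeeping of signs in the jump) are routine, so I expect the only real subtlety to be confirming that the flat extension of Proposition~\ref{ag24} can be taken globally on $\Omega$ relative to a single defining function $\rho$.
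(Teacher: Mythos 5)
Your proposal is correct and takes essentially the same route as the paper: both use Proposition~\ref{ag24} to get a flat extension $G$, extend the error $\overline{\mathfrak D}G$ smoothly across $S$ (you by one-sided zero-extension $g^+$, the paper by the sign-flipped $\eta=-\overline{\mathfrak D}G$ on $U^+$, $+\overline{\mathfrak D}G$ on $U^-$), solve the CRF system on the convex $\Omega$ via \cite{ABLSS}, and recombine (your $F^+=G-u$, $F^-=-u$ versus the paper's $F^\pm=(\psi\pm G)/2$). The differences are pure bookkeeping, and your final worry is settled exactly as in the paper: $S$ is orientable, hence admits a global defining function $\rho\in{\rm C}^\infty(\Omega)$.
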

\begin{proof}
Observe that $S$ is orientable, so $S$ is defined by $\rho=0$,
where $\rho\in{\rm C}^\infty(\Omega)$.
Let $G:\Omega\to\HH$ a smooth extension of $f$,
with $\overline{\mathfrak D}G$ flat on $S$ (Proposition~\ref{ag24}).
Define $\eta:\Omega\to\HH^2$ by
$$
\eta=
\begin{cases}
-\overline{\mathfrak D}G\>\> & {\rm on}\>\> U^+\\
\>\>\>\>0\>\> & {\rm on}\>\> S\\
\>\>\>\overline{\mathfrak D}G\>\> & {\rm on}\>\> U^-.
\end{cases}
$$
$\eta$ is smooth in $\Omega$, since $\overline{\mathfrak D}G$ is flat on $S$.
Set $\eta=(\eta_1,\eta_2)$.
Then, the conditions
$$
\Delta_1\eta_2=\frac{\partial}{\partial\overline q_2}\frac{\partial\eta_1}{\partial q_1},\qquad
\Delta_2\eta_1=\frac{\partial}{\partial\overline q_1}\frac{\partial\eta_2}{\partial q_2}
$$
are satisfied on $U^+\cup U^-$ (see~\eqref{cr3}) whence on $\Omega$.
Since $\Omega$ is convex, there exists $\psi:\Omega\to\HH$ smooth such
that $\overline{\mathfrak D}\psi=\eta$~\cite{ABLSS}.
Defining $F^+=(\psi+G)/2$, $F^-=(\psi-G)/2$,
we have the following: $F^+$ and $F^-$ are smooth up to $S$,
$\overline{\mathfrak D}F^+=0$ ($\overline{\mathfrak D}F^-=0$)
in $U^+$ ($U^-$) and $F^+|_S-F^-|_S=f$.

This ends the proof of Theorem~\ref{RH}.
\end{proof}

\subsubsection{Two applications}

\begin{teo}\label{set25}
Let $\Omega$ be a bounded domain with connected smooth boundary ${\rm b}\,\Omega$.
Then, every smooth admissible function $f:{\rm b}\,\Omega \to \HH$ extends to
$\Omega$ by an $\HH$-holomorphic function, smooth up to ${\rm b}\,\Omega$.
\end{teo}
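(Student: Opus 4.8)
The plan is to reduce Theorem~\ref{set25} to the Riemann--Hilbert Theorem~\ref{RH} by enclosing $\overline\Omega$ in a large convex domain, and then to eliminate the ``exterior'' component of the resulting jump decomposition by a Hartogs continuation argument. First I would fix an open ball $B=\{q\in\HH^2:|q|<R\}$ with $\overline\Omega\subset B$ and $B\setminus\overline\Omega\neq\varnothing$; such a ball is convex. Set $S={\rm b}\,\Omega$, $U^+=\Omega$ and $U^-=B\setminus\overline\Omega$. I would then verify that $(S,U^+,U^-)$ is a domain splitting of $B$: $S$ is a smooth compact hypersurface closed in $B$ (so it is the zero set of a smooth $\rho$ on $B$ with $\nabla\rho\neq0$ near $S$), the sets $U^\pm$ are open, disjoint and nonempty, $B\setminus S=U^+\cup U^-$, and each of $U^+$, $U^-$ has boundary $S$ in $B$ (the outer sphere ${\rm b}\,B$ does not belong to the open set $B$). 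Since admissibility of $f$ is an intrinsic condition on the pair $(S,f)$, independent of the ambient domain, the function $f$ is still a smooth admissible datum for the splitting $(S,U^+,U^-)$ of the convex domain $B$. Applying Theorem~\ref{RH}, I obtain $\HH$-holomorphic functions $F^+$ on $U^+=\Omega$ and $F^-$ on $U^-=B\setminus\overline\Omega$, both smooth up to $S$, with $f=F^+|_S-F^-|_S$.

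The next step is to absorb $F^-$ into the interior extension. Here I use that ${\rm b}\,\Omega$ is connected: by the Jordan--Brouwer separation theorem $\RR^8\setminus{\rm b}\,\Omega$ has exactly two components, the bounded one being $\Omega$, so the annular region $U^-=B\setminus\overline\Omega$ is connected. I would then invoke the Hartogs extension theorem for $\HH$-holomorphic functions (recalled in Section~\ref{13ott}, where it is deduced from the solvability of $\overline{\mathfrak D}u=g$ in $\HH^2$): since $\overline\Omega$ is a compact ``hole'' contained in $B$ with connected complement $U^-$ inside $B$, the $\HH$-holomorphic function $F^-$ on $U^-$ extends to an $\HH$-holomorphic function $\widetilde F^-$ on all of $B$. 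Because $\HH$-holomorphic functions are harmonic, $\widetilde F^-$ is real analytic on $B$, hence in particular $C^\infty$ on the neighborhood $B$ of $\overline\Omega$; moreover $\widetilde F^-|_S=F^-|_S$ by continuity across $S$.

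To conclude, I would set $H:=F^+-\widetilde F^-$ on $\Omega$. On $\Omega$ both $F^+$ and $\widetilde F^-$ are $\HH$-holomorphic, so $H$ is $\HH$-holomorphic; $F^+$ is smooth up to $S$ by construction and $\widetilde F^-$ is smooth on a full neighborhood of $S$, so $H$ is smooth up to ${\rm b}\,\Omega$; and on the boundary
\[
  H|_S=F^+|_S-\widetilde F^-|_S=F^+|_S-F^-|_S=f.
\]
Thus $H$ is the desired $\HH$-holomorphic extension of $f$ to $\Omega$, smooth up to ${\rm b}\,\Omega$.

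The main obstacle is the Hartogs continuation of $F^-$: this is exactly the point where the two structural hypotheses of the theorem are used. The dimension $n=2$ is needed for the Hartogs phenomenon to hold at all, and the connectedness of ${\rm b}\,\Omega$ is what guarantees (via Jordan--Brouwer) that the exterior region $U^-$ is connected, which is the precise topological requirement for the $\HH$-holomorphic extension across the hole $\overline\Omega$ to exist. The remaining verifications --- that $(S,U^+,U^-)$ is a legitimate domain splitting, that admissibility is inherited from the hypothesis, and that harmonicity yields smoothness of $\widetilde F^-$ up to $S$ --- are routine once the Hartogs step is in place.
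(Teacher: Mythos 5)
Your proof is correct and takes essentially the same route as the paper's: the paper applies Theorem~\ref{RH} to the domain splitting $({\rm b}\,\Omega,\,\Omega,\,\HH^2\setminus\overline\Omega)$ of the convex domain $\HH^2$ itself, and then eliminates $F^-$ by Hartogs' theorem, exactly as you do. The only difference is your cosmetic choice of a large ball $B$ instead of all of $\HH^2$ as the ambient convex domain, which costs you the small extra verification that $B\setminus\overline\Omega$ is connected (automatic for $\HH^2\setminus\overline\Omega$ once Jordan--Brouwer is invoked).
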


\begin{proof}
In our hypothesis,
$\HH^2 \setminus \overline\Omega$ is connected with boundary ${\rm b}\,\Omega$.
Since
$({\rm b}\,\Omega, \Omega, \HH^2 \setminus \overline\Omega)$ is a domain splitting of $\HH^2$,
by Theorem~\ref{RH},
$f = F^+|_S - F^-|_S$, where $F^+$, $F^-$ are $\HH$-holomorphic.
By Hartogs' Theorem $F^-$ extends to all of $\HH^2$ by an $\HH$-holomorphic
function $\widehat F^-$.
And this implies that $f$ is the boundary value
of $F^+ - \widehat F^-$.
\end{proof}

\begin{teo}\label{appl1}
Let $\Sigma$ be an open half-space and $S\subset \HH^2$ a
connected closed smooth hypersurface of $\Sigma$.
Assume that $\Sigma\setminus S$ splits into two connected
components $D$ and $W$ with $D$ bounded.
Let $f:S\to\HH$ be a smooth admissible function.
Then, $f$ extends to $D$ by an $\HH$-holomorphic
function $F$ which is smooth up to $S$.
\end{teo}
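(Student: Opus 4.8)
The plan is to mirror the proof of Theorem~\ref{set25}, with $\HH^2$ replaced by the convex half-space $\Sigma$. First I would observe that, since $S$ is connected, closed in $\Sigma$, and separates $\Sigma$ into the two components $D$ and $W$, each having boundary $S$, the triple $(S,D,W)$ is a domain splitting of $\Sigma$. A half-space is convex, so Theorem~\ref{RH} applies directly: the admissible function $f$ is a smooth jump, $f=F^+|_S-F^-|_S$, with $F^+$ an $\HH$-holomorphic function on the bounded component $D$ and $F^-$ an $\HH$-holomorphic function on the unbounded component $W$, both smooth up to $S$. The function we seek will be $F:=F^+-\widehat{F^-}$, where $\widehat{F^-}$ is an $\HH$-holomorphic extension of $F^-$ across $D$: then $F$ is $\HH$-holomorphic on $D$, smooth up to $S$, and $F|_S=F^+|_S-F^-|_S=f$, as required.

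The crux is therefore to produce $\widehat{F^-}$, i.e.\ to fill in the bounded hole $D$ by the Hartogs phenomenon, exactly as $F^-$ was removed in the proof of Theorem~\ref{set25}. Here $\overline D$ is compact (being the closure of the bounded set $D$) and $W$ is connected and unbounded. I would choose a cutoff $\chi$ equal to $1$ on a neighborhood of $D\cup S$ in $\Sigma$ and put $F_0:=(1-\chi)F^-$, extended by $0$ on $D\cup S$, a smooth function on $\Sigma$. Then $g:=\overline{\mathfrak D}F_0=-(\overline{\mathfrak D}\chi)\,F^-$ is supported in the transition region of $\chi$ and, being of the form $\overline{\mathfrak D}F_0$ with $F_0$ smooth, automatically satisfies the compatibility $\overline P(g)=0$ by~\eqref{cr3}. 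Since $\Sigma$ is convex, $\overline{\mathfrak D}u=g$ has a smooth solution $u$ on $\Sigma$ by~\cite{ABLSS} (using the distributional version of Remark~\ref{14ott} if convenient). Setting $\widehat{F^-}:=F_0-u$ yields an $\HH$-holomorphic function on $\Sigma$ which, by the same argument proving Hartogs' theorem, coincides with $F^-$ on $W$ (the solution $u$ being chosen, as in that argument, to vanish on the unbounded part, whence $u\equiv 0$ there by real-analyticity and the identity principle).

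The main obstacle is the behavior near $b\Sigma$. When $\overline D\Subset\Sigma$ (in particular when $S$ is compact), $D$ is a bounded domain of $\HH^2$ with connected smooth boundary $S$, so the statement is nothing but Theorem~\ref{set25}: the cutoff can be localized away from $b\Sigma$, $g$ has compact support in $\HH^2$, and the Hartogs step runs unchanged. The genuinely new case is $\overline D\cap b\Sigma\neq\varnothing$, where $D\cup S$ is closed in $\Sigma$ but not compact; then no cutoff can be $1$ near $\overline D$ while remaining supported away from $b\Sigma$, and $g$ need not have compact support in $\HH^2$. I expect this to be the hard part: one must carry out the extension inside the convex half-space $\Sigma$ itself, relying on the solvability of $\overline{\mathfrak D}u=g$ on $\Sigma$ together with a uniqueness/decay argument to pin down $u$ on the unbounded part of $W$, instead of appealing to the fundamental-solution construction available on all of $\HH^2$. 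The remaining steps—the reduction to a jump via Theorem~\ref{RH} and the final subtraction $F=F^+-\widehat{F^-}$—are routine.
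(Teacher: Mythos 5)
Your first step agrees with the paper: since a half-space is convex, Theorem~\ref{RH} applies to the splitting $(S,D,W)$ (the paper applies it inside $B\cap\Sigma$ for a large ball $B$ with $D\Subset B$, which is the same reduction), and the whole problem becomes filling the bounded hole $D$ with an $\HH$-holomorphic extension of the function living on the outer component. The gap is in how you fill the hole. As you yourself note, the only case not already covered by Theorem~\ref{set25} is $\overline D\cap{\rm b}\,\Sigma\neq\varnothing$, and precisely there your cutoff-and-solve argument breaks: the datum $g=-(\overline{\mathfrak D}\chi)F^-$ is supported in a collar around $D\cup S$ inside $W$ whose closure necessarily meets ${\rm b}\,\Sigma$, and $F^-$ need not extend (even continuously) to ${\rm b}\,\Sigma$, so $g$ is not a compactly supported datum in $\HH^2$. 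Hence the compactly supported solvability of \cite{Pe2} that powers the Hartogs argument in Theorem~\ref{set25} is unavailable, and the convex-domain solvability of \cite{ABLSS} on $\Sigma$ produces \emph{some} smooth $u$ with no control at all: there is no reason $u$ can be ``chosen to vanish on the unbounded part''. The identity principle cannot rescue this, because $u$ is $\HH$-holomorphic only off ${\rm supp}\,g$ and you would need it to vanish on some open subset of that region to start the continuation, which nothing provides. Without $\widehat{F^-}=F^-$ on $W$ you cannot conclude $\widehat{F^-}|_S=F^-|_S$, so $F^+-\widehat{F^-}$ need not restrict to $f$ on $S$; the construction collapses exactly at the step you flagged as ``the hard part''.

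The paper closes this gap by a completely different, geometric mechanism. Working in $B\cap\Sigma=B\cap\{y_3>0\}$, it extends the outer function $F^+$ across $D$ by a continuity (sweeping) argument: for $c>0$ let $S_c$ be the sphere centered at $(0,-c{\sf k})$ through ${\rm b}\,B\cap\{y_3=\varepsilon\}$, and let $\mathcal{C}$ be the set of $c$ for which $F^+$ extends past $S_c\cap\overline B$. If $c_0=\sup\mathcal{C}$ were finite, then $S_{c_0}\cap\Sigma$, viewed as part of the boundary of $U=B\setminus\overline B_{c_0}$, has negative definite second fundamental form, and the local extension theorem across strongly Levi $\HH$-concave boundaries (as in Theorem~4 of~\cite{Pe3}, via the Kontinuit\"atssatz) pushes $F^+$ past every point of $S_{c_0}\cap\overline B$ --- a contradiction; so $c_0=+\infty$, $F^+$ extends to $B\cap\{y_3>\varepsilon\}$ for all small $\varepsilon>0$, and analytic continuation (Theorem~\ref{sept4a}) finishes. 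This argument never solves a $\overline{\mathfrak D}$-problem with uncontrolled data and is insensitive to whether $\overline D$ touches ${\rm b}\,\Sigma$, which is exactly what your approach is missing.
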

\begin{proof}
Without loss of generality, we can assume that $\Sigma$
be the half space $\{y_3> 0\}$.
Let $B$ be an open ball centered at origin such
that $S$ divides $B\cap\Sigma$ into two connected
components $U^+$ and $U^-=D$ and $D$ is relatively compact in $B$.
By Theorem~\ref{RH}, there are $\HH$-holomorphic
functions $F^+:U^+\to\HH$, $F^-:D\to\HH$, smooth up to $S$,
such that $f=F^+|_S-F^-|_S$.
It is enough to show that $F^+$ extends $\HH$-holomorphically to
$B\cap \{y_3>0\}$.
We may assume that $F^+$ is defined on an neighborhood
of ${\rm b}\,B \cap \Sigma$ in $\Sigma$.

Fix $\varepsilon > 0$ sufficiently small.
For every $c>0$, let $S_c$ be the sphere centered
at $(0,-c{\sf k})$ and passing through ${\rm b}\,B \cap \{y_3 = \varepsilon\}$.
Consider the set ${\mathcal C}$ of $c\in \RR$ such that
$F^+|_{S_c\cap \overline B}$ extends to a neighborhood of $\overline{S}_c\cap \overline B$ in $\overline B$.
We have ${\mathcal C}\ne \emptyset$.
Let $c_0 = \sup {\mathcal C}$,
and assume by contradiction that $c_0$ is finite.
Observe that $F^+$ is defined in a neighborhood of
${\rm b}B \cap \{y_3 = \varepsilon\}$ in $\overline B$.
Consider $B_{c_0}$, the open ball having $S_{c_0}$ as its boundary and
let $U = B \setminus \overline B_{c_0}$.
Then, the second fundamental form of $S_{c_0} \cap \Sigma$
(as part of the boundary of $U$) is negative definite.
Hence, as in the proof of Theorem~4 of~\cite{Pe3},
we get that, for every $q_0 \in S_{c_0} \cap \overline B$,
there exists a domain $\Delta_{q_0} \subset U$ such that
every $\HH$-holomorphic function in $\Delta_{q_0}$ extends to a bigger
domain $\widehat \Delta_{q_0}$ containing $q_0$.
It follows that $F^+$ extends $\HH$-holomorphically to a neighborhood
of $S_{c_0}\cap \overline B$ in $\overline B$: contradiction.
This means that $c_0=+\infty$, thus $F^+$ extends to $B\cap \{y_3 > \varepsilon\}$,
for every $\varepsilon$ near $0^+$.
By analytic continuation (see Theorem~\ref{sept4a} below), this completes the proof.
\end{proof}

\begin{rem}\label{appl2}
With the same notations of the above Theorem,
let $F$ be the $\HH$-holomorphic extension of $f$.
If $|f|$ is bounded on $S$, then for every $q\in D$
$$
|F(q)| \le \sup_S \,\lvert f\rvert.
$$
\end{rem}

We mention that, an extension theorem of different type,
has been recently found by Baracco, Fassina and Pinton~\cite{BFP}.

Let $S$ be a connected smooth hypersurface in $\HH^2$.
We say that the \emph{analytic continuation principle} holds
for smooth admissible functions
on $S$ when the following is true: if $f:S\to\HH$ is a smooth
admissible function which vanishes on a nonempty open set
of $S$, then $f\equiv 0$.
  
\begin{teo}\label{sept4a}
Let $S$ be a connected smooth hypersurface in $\HH^2$.
Then, the analytic continuation principle for smooth admissible functions holds
on $S$ in the following two cases:
\begin{itemize}
\item[i)] $S$ is the boundary of a domain $\Omega\Subset \HH^2$ satisfying the
  hypothesis of Theorem~\ref{set25};
\item[ii)] $S$ is nondegenerate.
\end{itemize}
\end{teo}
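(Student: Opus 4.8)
The plan is to reduce both cases to a single unique continuation statement for harmonic functions: if a harmonic function defined on one side of a smooth hypersurface has vanishing Cauchy data (both the function and its normal derivative) on a relatively open piece, then it vanishes identically. Throughout I use that $\HH$-holomorphic functions are harmonic, hence real analytic, and that by the definition~\eqref{lu52} of $f^\perp$, wherever $f$ vanishes on an open subset $V$ of $S$ one has ${\rm d}f=0$ and therefore $f^\perp=0$ on $V$; together with the vanishing of the tangential derivatives, this means the whole first jet of the extension vanishes on $V$. In each case I first manufacture, near the relevant points, a \emph{single} $\HH$-holomorphic function whose trace is $f$, so that the vanishing of $f$ on an open set forces vanishing Cauchy data.

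Case i). By Theorem~\ref{set25}, $f$ extends to an $\HH$-holomorphic function $F$ on $\Omega$, smooth up to $S={\rm b}\,\Omega$. Let $V\subset S$ be the nonempty open set on which $f\equiv 0$. Then $F|_V=0$, the tangential derivatives of $F|_S$ vanish on $V$, and since $\overline{\mathfrak D}F=0$ on $S$ by continuity, Proposition~\ref{lu12} gives $\partial F/\partial\nu=f^\perp=0$ on $V$; thus $\nabla F=0$ on $V$. I then extend $F$ by zero across $V$ to the exterior of $\Omega$: because value and normal derivative match (both zero) on $V$, the resulting function $\tilde F$ is $C^1$ and, by Green's identity, weakly harmonic across $V$, hence harmonic in a full neighborhood of each point of $V$ by Weyl's lemma. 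As $\tilde F$ vanishes on a nonempty open exterior piece, real analyticity forces $\tilde F\equiv 0$ near $V$, so $F\equiv 0$ on the connected domain $\Omega$, i.e.\ $f\equiv 0$.

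Case ii). I show that the set $Z\subset S$ of points near which $f$ vanishes is open, nonempty, and closed, whence $Z=S$ by connectedness. Only closedness is at issue. Fix $p_0\in\overline Z\cap S$ and a small convex ball $U$ about $p_0$, so that $(S\cap U,U^+,U^-)$ is a domain splitting of $U$ and $f|_{S\cap U}$, being admissible, is a smooth jump $f=F^+|_S-F^-|_S$ by Theorem~\ref{RH}. Since $S$ is nondegenerate there is a right $\HH$-line along which the second fundamental form has constant sign; exactly as in the proof of Theorem~\ref{appl1} (the extension argument of Theorem~4 of~\cite{Pe3}), the $\HH$-holomorphic function on the pseudoconcave side extends across $S$ to a full neighborhood of $p_0$. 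After possibly interchanging $U^+$ and $U^-$ and replacing $f$ by $-f$, assume $F^-$ so extends; then $H:=F^+-F^-$ is $\HH$-holomorphic on $U^+$, smooth up to $S$, with $H|_S=f$. Because $p_0\in\overline Z$, $f$ vanishes on a nonempty open $V\subset S\cap U$, so $H$ has vanishing Cauchy data on $V$ (again $\partial H/\partial\nu=f^\perp=0$ by Proposition~\ref{lu12}). The zero-extension argument of Case~i) then yields $H\equiv 0$ on $U^+$, hence $f\equiv 0$ on $S\cap U$ and $p_0\in Z$. Thus $Z$ is closed and $f\equiv 0$.

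The main obstacle I expect is the harmonic unique continuation from boundary Cauchy data that underlies both cases; the clean route is the zero-extension across $V$ combined with Weyl's lemma and real analyticity, and it is essential that vanishing of $f$ on an open set yields vanishing of both $f^\perp$ and the tangential derivatives, so that the extended function is genuinely $C^1$. In Case~ii) the secondary difficulty is the one-sided $\HH$-holomorphic extension furnished by nondegeneracy, for which I rely on the Levi-$\HH$-convex extension technique already invoked in Theorem~\ref{appl1}.
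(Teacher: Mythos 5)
Your proof is correct, but it takes a genuinely different route from the paper's. In case i) the paper never invokes Cauchy-data unique continuation: it enlarges $\Omega$ to a domain $\Omega_1$ by pushing the boundary outward across a relatively compact open piece of $\{f=0\}$, extends the datum by zero to an admissible function on ${\rm b}\,\Omega_1$, applies Theorem~\ref{set25} again on $\Omega_1$, identifies the resulting extension $F_1$ with $F$ via the Bochner-Martinelli formula, and then kills $F_1$ on $\Omega_1\setminus\Omega$ by the maximum principle (it vanishes on the boundary of $\Omega_1\setminus\Omega$), concluding by real analyticity. You replace all of this with the classical Cauchy-data lemma for harmonic functions (zero extension, Green's identity, Weyl's lemma), the quaternionic input being precisely your observation that $f\equiv 0$ on an open $V\subset S$ forces $f^\perp\equiv 0$ on $V$ by~\eqref{lu52}, whence, through Proposition~\ref{lu12}, the whole first jet of $F$ vanishes on $V$. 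In case ii) the paper quotes Theorem~6 of~\cite{Pe3} to extend $f$ $\HH$-holomorphically to a one-sided neighborhood of all of $S$, and then reduces to case i) by constructing, for each $p\in S$, an auxiliary bounded domain in $\{\rho<0\}$ whose boundary contains $p$ and meets the zero set in a set with interior points; you instead run a connectedness (open-closed) argument in which the one-sided extension is manufactured locally from Theorem~\ref{RH} together with the Kontinuit\"atssatz extension argument of Theorem~4 of~\cite{Pe3}, which is the same mechanism underlying the theorem the paper cites. Your route is analytically more self-contained (standard harmonic-function theory instead of repeated appeals to Theorem~\ref{set25} and the Bochner-Martinelli identification), avoids the geometric construction of enlarged and auxiliary domains, and, being local, is insensitive to the possibility that the side on which the nondegeneracy direction makes $S$ pseudoconcave changes from point to point; the paper's route stays entirely inside its own toolbox and is shorter once Theorem~6 of~\cite{Pe3} is taken off the shelf. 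The one point worth making explicit in your write-up is that you apply Proposition~\ref{lu12} to functions holomorphic on one side of $S$ and smooth up to it (i.e., to a one-sided extension), which strictly speaking requires first extending such a function smoothly across $S$; this is harmless, and it is exactly how the paper itself uses that proposition in Remark~\ref{rmk:restrict-crf}.
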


\begin{proof}
i)
Consider a smooth admissible function $f$ on $S$, $F$ its $\HH$-holomorphic
extension on $\Omega$, and let $Z = \{f = 0\}$.
Let $q_0 \in \mathring Z$
and $U$ be a neighborhood of $q_0$ relatively compact in $\mathring Z$.
Then there exists a domain $\Omega_1$ with smooth boundary,
satisfying the hypothesis of Theorem~\ref{set25},
such that $\Omega \subset \Omega_1$,
${\rm b}\, \Omega_1 \setminus {\rm b}\, \Omega \subset \HH^2 \setminus \overline \Omega$, and
${\rm b}\, \Omega \setminus U = {\rm b}\, \Omega_1 \cap {\rm b}\, \Omega$.
The function $f_1$ on ${\rm b}\, \Omega_1$ that
coincides with $f$ on ${\rm b}\, \Omega_1 \cap {\rm b}\, \Omega$ and is zero elsewhere,
is smooth admissible and, by Theorem~\ref{set25},
extends to an $\HH$-holomorphic function $F_1$ on $\Omega_1$,
smooth up to the boundary.
By the Bochner-Martinelli formula, it follows immediately
that $F_1$ is an extension of $F$.
By construction, $F_1$ vanishes on the boundary of $\Omega_1 \setminus \Omega$,
and then, $F_1$ vanishes on $\Omega_1 \setminus \Omega$.
By analytic continuation, $F_1\equiv 0$ and
therefore $F\equiv 0$ and $f\equiv 0$ too.

ii)
Let $f$ be a smooth admissible function on
$S = \{\rho = 0\}$ and let $Z = \{f = 0\}$.
Assume $f$ is not identically zero.
By Theorem~6~\cite{Pe3}, there exists a neighborhood $U$ of $S$ in,
say, $\{\rho \le 0\}$, such that the function $f$
extends by an $\HH$-holomorphic function $F$.
Take a point $p\in S$, there exists a domain $\Omega \subset \{\rho < 0\}$,
whose boundary contains $p$,
such that the set ${\rm b}\, \Omega \cap Z$ has interior points in $S$,
and the hypothesis of i) holds for $\Omega$.
Using i), $F|_{{\rm b}\,\Omega} = 0$, in particular, $f(p) = 0$.
This concludes the proof, $p$ being a generic point of $S$.
\end{proof}

\begin{rem}
The analytic continuation principle does not hold for an arbitrary
smooth hypersurface $S$.
For instance, all smooth functions $f=f(y_0,y_1,y_2)$
are admissible on $S=\{y_3=0\}.$
\end{rem}

\section{The CRF condition in weak form}\label{F1}

In order to treat the Riemann-Hilbert problem (in particular the boundary
problem) for $\HH$-holomorphic  functions with continuous boundary data we
need to give the {\rm CRF} conditions in a weak form.
We need some preliminaries.

Let $(S,U^+,U^-)$ be a domain splitting of a domain $\Omega$ in $\RR^n$
and $\varrho\in{\rm C}^\infty(\Omega)$ such that
$$
S = \{\rho = 0\},\qquad U^+ = \{\rho > 0\},\qquad U^- = \{\rho < 0\},\qquad \nabla \rho|_S \neq 0
$$
and consider on $S$ the orientation determined on the boundary
of $U^+$ by the inward normal vector.

By the existence of tubular neighborhoods we may assume that for $-\varepsilon0<\varepsilon<\varepsilon0$, $\varepsilon0>0$,
the hypersurface $S_\varepsilon=\{\varrho=\varepsilon\}$ is diffeomorphic to $S$ by a diffeomorphism $\pi_\varepsilon: S_\varepsilon\to S$.

Let $T$ be a distribution on $S$. We say that $T$ is the \emph{trace} or the \emph{boundary value} (in the sense of distributions) of a function  $u\in {\rm L}^1_{\rm loc}(U^+)$  following $\{S_\varepsilon\}_{0<\varepsilon<\varepsilon0}$ if 
$$
\lim\limits_{\varepsilon\to 0^+}\int_{S_\varepsilon}\pi_\varepsilon^\ast(\phi)u=(T,\phi)
$$
for every real-valued test form $\phi$ of class ${\rm C}_0^\infty$
on $S$ of degree $n-1$.  In such a situation we set $\gamma^+(u)=T$. 

In the same manner we give the notion of trace $\gamma^-(u)$ if $u\in {\rm L}^1_{\rm loc}(U^-)$.
 
The following result was proved in \cite[Corollary~I.~2.~6]{LT}.
Let $P(D)$ be a linear elliptic operator on $U^+$ with smooth coefficients
and $u\in{\rm C}^\infty(U^+)$ a solution of the equation $P(D)u=0$.
Then $u$ has a boundary value $\gamma^+(u)$ if and only if $u$ extends as distribution through $S$.

Now we are in position to state the CRF condition in a weak form.                                                                                                                                                                    
\begin{pro}\label{lu24}
Let $f:S\to\HH$ be continuous function and $T_1=T_{1,f}$, $T_2=T_{2,f}$ the
distributions on $\Omega$ \(supported by $S$\) defined by  
\begin{equation}\label{lu73}
\begin{cases}
\phi\mapsto\int_S{{\rm D}q_1}\wedge f\phi\,{\rm d}y\\
\phi\mapsto\int_S{{\rm D}q_2}\wedge f\phi\,{\rm d}x,
\end{cases}
\end{equation}
(where $\phi\in{\rm C}_0^\infty(\Omega)$ is a real valued test function).
If $f$ is locally a jump of $\HH$-holomorphic functions continuous up $S$,
then the system
\begin{equation}\label{lu25}
\left\{
\begin{aligned}
\frac{\partial v}{\partial\overline q_1}&=T_1\\
\frac{\partial v}{\partial\overline q_2}&=T_2
\end{aligned}
\right.
\end{equation}
is locally solvable along $S$.
\end{pro}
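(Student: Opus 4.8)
The plan is to solve the system explicitly by exhibiting a single distribution whose Cauchy-Riemann-Fueter derivative is $(T_1,T_2)$; local solvability is then immediate. Since the statement is local along $S$ and $f$ is by hypothesis locally a jump, I would fix $q_0\in S$ and pass to a neighborhood $U$ (which I may take convex and relatively compact) on which $f=F^+|_S-F^-|_S$, where $F^\pm$ are $\HH$-holomorphic on $U^\pm=U\cap\{\pm\rho>0\}$ and continuous up to $S$. I then set
\[
v=\begin{cases}-F^+ & \text{on }U^+,\\ -F^-&\text{on }U^-,\end{cases}
\]
which is locally bounded near $S$, hence an $\HH$-valued distribution on $U$. (Each $F^\pm$, being a solution of the elliptic system $\overline{\mathfrak D}F^\pm=0$ that admits a boundary value on $S$, also extends as a distribution through $S$ by the quoted result of~\cite{LT}, so the set-up is consistent with the trace formalism $\gamma^\pm$.)

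The heart of the argument is to compute $\overline{\mathfrak D}v$ in the sense of distributions and to recognize it as $(T_1,T_2)$. First, because $F^+$ and $F^-$ are $\HH$-holomorphic, the interior (classical) contribution $\partial v/\partial\overline q_h$ vanishes on $U^+\cup U^-$, so $\overline{\mathfrak D}v$ is a current supported on $S$, produced solely by the jump of $v$ across $S$. To identify it I would use the one-variable identity~\eqref{lu1} applied in the $q_1$- and $q_2$-variables separately: integrating ${\rm d}\big({\rm D}q_1\cdot v\phi\wedge{\rm d}y\big)$ over $\{\rho>\delta\}$ and $\{\rho<-\delta\}$ against a test function $\phi$, Stokes' theorem turns the bulk into boundary integrals over $S_{\pm\delta}$; letting $\delta\to 0^+$ and using continuity of $F^\pm$ up to $S$, the two boundary terms combine, through $F^+|_S-F^-|_S=f$, into $\int_S{\rm D}q_1\wedge f\phi\,{\rm d}y$. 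The analogous computation in the second variable, starting from ${\rm d}\big({\rm d}x\wedge{\rm D}q_2\cdot v\phi\big)$, yields $\int_S{\rm D}q_2\wedge f\phi\,{\rm d}x$. Comparing with~\eqref{lu73} gives exactly
\[
\frac{\partial v}{\partial\overline q_1}=T_1,\qquad \frac{\partial v}{\partial\overline q_2}=T_2
\]
on $U$. The signs in the definition of $v$ are chosen to absorb the orientation of $S$ as the boundary of $U^+$: after restriction to $S$ the normal components are read off from~\eqref{lu50}, so that the jump contributes $\nu_1 f$, resp. $\nu_2 f$, against the surface measure, which matches $T_1$, $T_2$ up to the global sign.

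I expect the main obstacle to be precisely this distributional jump computation. One must handle the noncommutativity of $\HH$, so that the factors ${\sf i}_\alpha$, the jump $f$ and the normal $\nu_h$ appear in the correct order; fix the orientation and normalization conventions consistently with~\eqref{lu50}; and justify passing to the limit $\delta\to0^+$ when $F^\pm$ are only continuous, not $C^1$, up to $S$, which is exactly where the boundary-value results recalled before the statement enter.

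Finally, I would note that the construction can alternatively be routed through the solvability theory. Since $\overline P\circ\overline{\mathfrak D}=0$ as a composition of constant-coefficient operators (the computation behind~\eqref{cr3}, valid verbatim for distributions), the identity $\overline{\mathfrak D}v=(T_1,T_2)$ shows a fortiori that $\overline P(T_1,T_2)=0$; on the convex neighborhood $U$ the distributional version of~\cite{ABLSS} quoted in Remark~\ref{14ott} then guarantees solvability of~\eqref{lu25}. The explicit $v$ above already furnishes the solution, however, making this last step unnecessary.
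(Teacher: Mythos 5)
Your proposal is correct and follows essentially the same route as the paper's proof: the paper likewise defines the piecewise distribution equal to $-F^{+}$ on $U^{+}$ and $-F^{-}$ on $U^{-}$, computes its Cauchy-Riemann-Fueter derivative in the distributional sense via Stokes' theorem and identity~\eqref{lu1}, identifies the resulting boundary terms with $T_1,T_2$ through $f=F^{+}|_S-F^{-}|_S$, and handles merely continuous boundary values by running the argument on the shifted hypersurfaces $S_\varepsilon$ and passing to the limit. The only difference is organizational: the paper treats the smooth-up-to-$S$ case first and then approximates, whereas you build the $\delta\to 0^{+}$ limit into the Stokes computation from the start; your closing remark via $\overline P$ and convex solvability is, as you note, not needed.
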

\begin{proof}
Let $q_0\in S$ and $F^\pm$ $\HH$-holomorphic
functions in $U^\pm$, smooth up to $S$ such
that $f|_{U\cap S}={F^+}|_S-{F^-}|_S$.
Let
\[
F=\begin{cases}
- F^+ \>\> {\rm in}\>U^+\\
- F^- \>\> {\rm in}\>U^-
\end{cases}
\]
and denote by the same letter $F$ the distribution
$$
\phi\mapsto\int_\Omega \phi\,F\,{\rm d}x\wedge{\rm d}y:
$$
here $\phi$ is a real valued test function.
Then
\begin{equation}\label{lu27}
\begin{aligned}
\frac{\partial F}{\partial\overline q_1}(\phi)
&= -\int_\Omega \phi_{\overline q_1}\,F\,{\rm d}x\wedge{\rm d}y\\
&= \int_{U^+} \phi_{\overline q_1}\,F^+{\rm d}x\wedge{\rm d}y
 + \int_{U^-} \phi_{\overline q_1}\,F^-{\rm d}x\wedge{\rm d}y.
\end{aligned}
\end{equation}
Denote ${\rm d}_x$ (${\rm d}_y$) the differential with
respect to the $x$ ($y$)-variables.
Then, since ${\rm D}q_1$ is closed, we have
$$
\begin{aligned}
\int_{U^+} \phi_{\overline q_1}\,F^+{\rm d}x\wedge{\rm d}y
&=\int_{U^+}{\rm d}_x({\rm D}q_1\phi) F^+\wedge {\rm d}y\\
&=\int_{U^+}{\rm d}_x({\rm D}q_1\cdot F^+\cdot\phi)\wedge {\rm d}y
  -\int_{U^+}{\rm d}_x({\rm D}q_1\cdot F^+)\phi\wedge {\rm d}y\\
&=\int_{U^+}{\rm d}({\rm D}q_1\cdot F^+\phi \wedge {\rm d}y)
  -\int_{U^+}{\rm d}_x({\rm D}q_1\cdot F^+)\phi \wedge {\rm d}y\\
&=\int_{S}{\rm D}q_1\cdot F^+\phi \wedge {\rm d}y
  -\int_{U^+}\frac{\partial{F^+}}{\partial\bar q_1}\phi\,{\rm d}x\wedge{\rm d}y\\
&=\int_{S}{\rm D}q_1\cdot F^+\phi \wedge {\rm d}y
\end{aligned}
$$
by~\eqref{lu1} and the $\HH$-holomorphy of $F^+$.

In the same manner,
$$
\int_{U^-} \phi_{\overline q_1}F^-{\rm d}x\wedge{\rm d}y
=-\int_{S}{\rm D}q_1\cdot F^-\phi \wedge {\rm d}y
$$
(${\rm b}\,U^-\cap S=-{\rm b}\,U^+\cap S$),
whence
\begin{equation}\label{lu28}
\begin{aligned}
\frac{\partial F}{\partial\overline q_1}(\phi)
&=\int_{S}{\rm D}q_1\cdot F^+\phi\wedge {\rm d}y
  -\int_{S}{\rm D}q_1\cdot F^-\phi\wedge {\rm d}y\\
&=\int_{S}{\rm D}q_1\cdot f\phi \wedge {\rm d}y.
\end{aligned}
\end{equation}
Analogously, we get
\begin{equation}\label{lu29}
\frac{\partial F}{\partial\overline q_2}(\phi)=\int_{S}{\rm D}q_2\cdot f\phi \wedge {\rm d}x.
\end{equation}
Equations~\eqref{lu28} and~\eqref{lu29} show that the distribution $F$ is a solution of~\eqref{lu25}.

If $f$ is only continuous, we approximate $S$ by
hypersurfaces $S_\varepsilon$, with $0 < \lvert \varepsilon \rvert < \varepsilon0$, and
we apply the previous argument to $F^+$ on $\{ \rho \ge \varepsilon \}$ when $\varepsilon>0$,
and $F^-$ on $\{ \rho \le \varepsilon \}$ when $\varepsilon<0$.
Hence, by taking the limits, identities~\eqref{lu28} and~\eqref{lu29}
holds in the continuous case too.
\end{proof}

From the above Proposition and~\eqref{cr2}, it follows
\begin{cor}\label{lu30}
If $f$ is a jump of $\HH$-holomorphic functions continuous up to $S$,
then in $\Omega$ we have
\begin{equation}
  \label{lu32}
  \begin{aligned}
    \frac{\partial}{\partial \overline q_1}\frac{\partial T_2}{\partial q_2}-\Delta_2 T_1&=0\\
    \frac{\partial}{\partial \overline q_2}\frac{\partial T_1}{\partial q_1}-\Delta_1 T_2&=0          
  \end{aligned}
\end{equation}
in the distribution sense, i.e.,
\begin{equation}
  \label{33}
  \begin{aligned}
    \int_S\left[\Big(\frac{\partial}{\partial \overline q_1}\frac{\partial}{\partial q_2}\phi\Big){\rm d}x\wedge{\rm D}q_2-(\Delta_2\phi){\rm d}y\wedge{\rm D}q_1 \right]f=0\\
    \int_S\left[\Big(\frac{\partial}{\partial \overline q_2}\frac{\partial }{\partial q_1}\phi\Big){\rm d}y\wedge{\rm D}q_1-(\Delta_1\phi){\rm d}x\wedge{\rm D}q_2\right]f=0        
  \end{aligned}
\end{equation}
for every $\phi\in{\rm C}_0^\infty(\Omega)$.
\end{cor}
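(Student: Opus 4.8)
The plan is to read off~\eqref{lu32} as the distributional version of the compatibility identity $\overline P\circ\overline{\mathfrak D}=0$ established in~\eqref{cr00}--\eqref{cr3}, and then to unwind it into the explicit form~\eqref{33}. First I would apply Proposition~\ref{lu24}: since $f$ is a jump, the distribution $F$ constructed there (equal to $-F^\pm$ on $U^\pm$) is globally defined on $\Omega$ and solves~\eqref{lu25}, i.e.~$\partial F/\partial\overline q_1=T_1$ and $\partial F/\partial\overline q_2=T_2$, so that $\overline{\mathfrak D}F=(T_1,T_2)$ in the distribution sense. The crucial point is that the derivation of $\overline P(g)=0$ in~\eqref{cr00}--\eqref{cr3} uses nothing more than the commutativity of partial derivatives and the factorization $\Delta_h=(\partial/\partial q_h)(\partial/\partial\overline q_h)$ from~\eqref{laplaciano}, both of which remain valid verbatim when the solution is merely a distribution (cf.~Remark~\ref{14ott}).

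Carrying this out, I would compute in the distribution sense
\begin{equation*}
\Delta_2 T_1=\Delta_2\frac{\partial F}{\partial\overline q_1}
=\frac{\partial}{\partial\overline q_1}\,\Delta_2 F
=\frac{\partial}{\partial\overline q_1}\frac{\partial}{\partial q_2}\frac{\partial F}{\partial\overline q_2}
=\frac{\partial}{\partial\overline q_1}\frac{\partial T_2}{\partial q_2},
\end{equation*}
using successively $T_1=\partial F/\partial\overline q_1$, the commutation of the scalar operator $\Delta_2$ with $\partial/\partial\overline q_1$, the factorization~\eqref{laplaciano}, and $T_2=\partial F/\partial\overline q_2$. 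This is exactly the first line of~\eqref{lu32}; exchanging the indices $1\leftrightarrow 2$ yields the second, so that $\overline P\big((T_1,T_2)\big)=0$.

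It then remains to rewrite~\eqref{lu32}, tested against a real $\phi\in{\rm C}_0^\infty(\Omega)$, in the concrete shape~\eqref{33}. This is the only genuinely computational step: starting from the definitions~\eqref{lu73} of $T_1,T_2$, one transfers the operators $\partial/\partial\overline q_1\,\partial/\partial q_2$ and $\Delta_2$ (respectively $\partial/\partial\overline q_2\,\partial/\partial q_1$ and $\Delta_1$) off the distributions and onto $\phi$ by integration by parts, the localization on $S$ producing no boundary contributions. The forms ${\rm d}x\wedge{\rm D}q_2$ and ${\rm d}y\wedge{\rm D}q_1$ appearing in~\eqref{33} are precisely what survives once the $q_2$- and $q_1$-derivatives have been pushed through the forms ${\rm D}q_2$, ${\rm D}q_1$ that define $T_2$, $T_1$. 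I expect the main obstacle to lie entirely in this last passage, namely the careful bookkeeping of signs and of the left multiplications by the quaternion units hidden in ${\rm D}q_1,{\rm D}q_2$ and in $\partial/\partial\overline q_s,\partial/\partial q_h$; conceptually there is nothing deeper here, the whole corollary being the distributional reflection of the elementary identity behind~\eqref{cr3}.
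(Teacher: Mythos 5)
Your proposal is correct and is essentially the paper's own argument: the corollary is stated there as an immediate consequence of Proposition~\ref{lu24} together with the compatibility identities~\eqref{cr2}, which is exactly your route — take the distribution $F$ solving $\overline{\mathfrak D}F=(T_1,T_2)$, run the constant-coefficient commutation/factorization computation of~\eqref{cr00}--\eqref{cr3} in the distributional setting, and unwind against test functions to get~\eqref{33}. Your writeup merely makes explicit the steps the paper leaves implicit.
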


We say that a continuous function $f:S\to\HH$ is
a \emph{weakly admissible function} if it satisfies~\eqref{33}.
 
We have the
\begin{teo}\label{lu33}
A continuous function $f:S\to\HH$ is locally a jump
of $\HH$-holomorphic functions, continuous up
to $S$ if and only if it is a weakly admissible function.
In particular, assume that $S$ is the connected boundary of a bounded domain $\Omega$.
Then, every continuous admissible function $f:{\rm b}\,\Omega\to\HH$ extends
to $\Omega$ by an $\HH$-holomorphic function which is continuous up to ${\rm b}\,\Omega$.
\end{teo}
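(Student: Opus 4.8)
The plan is to prove the two implications of the stated equivalence and then to deduce the global extension statement from the local one together with Hartogs' theorem, in complete analogy with the smooth case of Theorem~\ref{set25}.

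One implication is already available. If $f$ is locally a jump of $\HH$-holomorphic functions continuous up to $S$, then by Proposition~\ref{lu24}---whose last paragraph handles the merely continuous case by approximating $S$ with the level sets $S_\varepsilon$---the system~\eqref{lu25}, namely $\partial v/\partial\overline q_s=T_s$ for the jump distributions $T_s=T_{s,f}$, is locally solvable, and Corollary~\ref{lu30} turns this into the identities~\eqref{33}. Since~\eqref{33} is tested against arbitrary $\phi\in{\rm C}_0^\infty(\Omega)$ with small support, the local jump property already suffices to conclude that $f$ is weakly admissible.

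For the converse, fix $q_0\in S$ and a small convex neighborhood $U$. Weak admissibility is precisely the statement that the pair $T=(T_1,T_2)$ satisfies the compatibility relation $\overline P(T)=0$ in the distribution sense, i.e.~\eqref{lu32}. Hence, by the distributional solvability on convex domains (Remark~\ref{14ott}, resting on~\cite{ABLSS} and the division of distributions), there is a distribution $v$ on $U$ with $\overline{\mathfrak D}v=T$. As $T_1,T_2$ are supported on $S$, we have $\overline{\mathfrak D}v=0$ on $U\setminus S$; because $\HH$-holomorphic functions are harmonic and $\Delta$ is elliptic, $v$ is in fact smooth and $\HH$-holomorphic on $U\cap U^+$ and $U\cap U^-$, yielding $F^+$, $F^-$. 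Applying the computation of Proposition~\ref{lu24} to these $F^\pm$ (with $F=-v$), the distribution $\overline{\mathfrak D}v$ is exactly the pair of jump distributions attached to $g=F^+|_S-F^-|_S$; matching this with $T=(T_{1,f},T_{2,f})$ and using that the correspondence $h\mapsto(T_{1,h},T_{2,h})$ is injective---at every point of $S$ the unit normal $(\nu_1,\nu_2)$ is nonzero, and by computations parallel to~\eqref{lu50} the coefficient of ${\rm D}q_s\wedge{\rm d}(\cdots)|_S$ is an invertible quaternion whenever $\nu_s\neq0$---we obtain $g=f$ after the obvious sign normalization. Thus $f$ is locally a jump.

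The main obstacle is the regularity of $F^\pm$ up to $S$, since a priori the distributional solution $v$ carries only distributional boundary values. Here one invokes \cite[Corollary~I.2.6]{LT}: each $F^\pm$ solves the elliptic equation $\Delta F^\pm=0$ on one side and extends across $S$ as the distribution $v$, so the traces $\gamma^\pm(v)$ exist; the fact that the data $T_s$ have continuous density (as $f$ is continuous) then upgrades $F^\pm$ to functions continuous up to $S$, with continuous difference $f$. Finally, for the global assertion one takes the convex domain to be all of $\HH^2$ and the splitting $({\rm b}\,\Omega,\Omega,\HH^2\setminus\overline\Omega)$: for a continuous admissible (i.e.\ weakly admissible) $f$, the construction above carried out globally gives $\HH$-holomorphic $F^+$ on $\Omega$ and $F^-$ on $\HH^2\setminus\overline\Omega$, continuous up to ${\rm b}\,\Omega$, with $f=F^+|_S-F^-|_S$; by Hartogs' theorem $F^-$ extends $\HH$-holomorphically to all of $\HH^2$, and $F^+$ minus this extension is the required $\HH$-holomorphic extension of $f$, continuous up to ${\rm b}\,\Omega$, exactly as in Theorem~\ref{set25}.
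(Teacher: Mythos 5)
Your skeleton coincides with the paper's: the ``jump $\Rightarrow$ weakly admissible'' direction is indeed just Proposition~\ref{lu24} combined with Corollary~\ref{lu30}; for the converse you solve $\overline{\mathfrak D}v=(T_1,T_2)$ distributionally on a convex neighborhood via Remark~\ref{14ott}, note that $v$ is $\HH$-holomorphic off $S$ by ellipticity, and you close the global statement with Hartogs' theorem exactly as in Theorem~\ref{set25}. All of that matches the paper.

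The genuine gap is the boundary-regularity step, which is the analytic heart of the paper's proof and which you only assert. From \cite[Corollaire~I.2.6]{LT} one gets that $F^\pm$ admit \emph{distributional} traces on $S$ (the paper also uses Th\'eor\`eme~II.1.3 there to identify $\gamma(F^+)-\gamma(F^-)=f$); but your sentence ``the fact that the data $T_s$ have continuous density then upgrades $F^\pm$ to functions continuous up to $S$'' is precisely the statement that needs proof, and it does not follow from \cite{LT}, which never yields better-than-distributional boundary values. The paper proves it as follows: choose auxiliary smooth domains $V^\pm\Subset\Omega$, $V^\pm\subset U^\pm$, whose boundaries share a relative neighborhood $S_0$ of $q_0$ in $S$; represent $F^\pm$ inside $V^\pm$ by the Bochner--Martinelli formula paired with the distributional traces $\theta^\pm$ on ${\rm b}V^\pm$; insert a cutoff $\psi\in{\rm C}_0^\infty(S_0)$ with $\psi\equiv1$ near $q_0$ and use $\theta^+-\theta^-=f$ to obtain $F^\pm(q)=\pm\int_{{\rm b}V^\pm}\psi f\,{\bf K}^{BM}(\cdot,q)+(\text{smooth near }q_0)$; finally, continuity of $F^\pm$ up to $S$ follows from classical potential theory \cite{Mi} applied to the Bochner--Martinelli integral of the \emph{continuous} density $\psi f$. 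Your proposal never brings in the Bochner--Martinelli kernel, so this entire mechanism is absent. Relatedly, your identification of the jump with $f$ (applying the computation of Proposition~\ref{lu24} to $F^\pm$ and invoking injectivity of $h\mapsto(T_{1,h},T_{2,h})$) presupposes that $F^\pm$ are already continuous up to $S$, so as ordered it is circular: the regularity must come first, and it is exactly the missing piece.
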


\begin{proof}
We only have to prove that if $f$ is weakly admissible then
it is locally a jump of $\HH$-holomorphic functions continuous up to $S$.

Let $q_0\in S$, and $(S\cap\Omega,U^+,U^-)$ be a splitting domain of a convex
domain $\Omega$ containing $q_0$.
Since $f$ is weakly admissible and $\Omega$ is convex,
by~\eqref{lu32} and Remark~\ref{14ott} there exists a
distribution $F$ in $\Omega$ such that
\begin{equation}\label{lu95}
\left\{
\begin{aligned}
\frac{\partial F}{\partial\overline q_1}&=T_1\\
\frac{\partial F}{\partial\overline q_2}&=T_2.
\end{aligned}
\right.
\end{equation}
Since
$\frac{\partial F}{\partial\overline q_1}|_{\Omega\setminus S}=\frac{\partial F}{\partial\overline q_2}|_{\Omega\setminus S}=0$,
$F$ is $\HH$-holomorphic on $\Omega\setminus S$.

Let $F^\pm=F|_{U^\pm}$.
Since $F^\pm$ are pluriharmonic, the results of~\cite{LT} apply.
In particular,
$F^\pm$ have traces $\gamma(F^\pm)$ on $S$ in the sense of
distributions and $\gamma(F^+)-\gamma(F^-)=f$~\cite[Corollaire~I.2.6 and Théorème~II.1.3]{LT}.

Let $V^\pm$ be domains with smooth boundary such
that $V^\pm\Subset \Omega$, $V^\pm\subset U^\pm$
and $({\rm b}V^+\cap S)=({\rm b}V^-\cap S)$ is a relative
neighborhood $S_0$ of $q_0$ in $S$.
Again, by~\cite[Corollaire~I.2.6]{LT}, $F^\pm$ have traces $\theta^\pm$
on ${\rm b}V^\pm$ in the sense of distributions, and, by the Bochner-Martinelli
formula for $\HH$-holomorphic functions, we have
\[
F^\pm(q)=\big\langle\theta^\pm,{\bf K}^{BM}(\cdot,q)\big\rangle
\]
for $q\in V^\pm$.

Let $\psi\in{\rm C}_0^\infty(S_0)$ such that $\psi=1$ on a neighborhood of $q_0$.
Then,
$$
\begin{aligned}
\big\langle\psi\theta^+,{\bf K}^{BM}(\cdot,q)\big\rangle+\big\langle(1-\psi)\theta^+,{\bf K}^{BM}(\cdot,q)\big\rangle
&=
\begin{cases}
F^+(q)\>\>\>q\in V^+\\
\>0 \>\>\>\>\>\>\>\>\>\>\>q\notin\overline V^+
\end{cases}
\\
\big\langle\psi\theta^-,{\bf K}^{BM}(\cdot,q)\big\rangle+\big\langle(1-\psi)\theta^-,{\bf K}^{BM}(\cdot,q)\big\rangle
&=
\begin{cases}
F^-(q)\>\>\>q\in V^-\\
\>0 \>\>\>\>\>\>\>\>\>\>\>q\notin\overline V^-.
\end{cases}
\end{aligned}
$$
The functions $\big\langle(1-\psi)\theta^\pm,{\bf K}^{BM}(\cdot,q)\big\rangle$,
$\big\langle \psi\theta^\pm,{\bf K}^{BM}(\cdot,q)\big\rangle$ are smooth
near $q_0$ and, since $\theta^+-\theta^-=f$, we have
$$
\begin{aligned}
  F^+(q)
  &= \big\langle\psi f,{\bf K}^{BM}(\cdot,q)\big\rangle +
     \big\langle\psi \theta^-,{\bf K}^{BM}(\cdot,q)\big\rangle+\big\langle(1-\psi)\theta^+,{\bf K}^{BM}(\cdot,q)\big\rangle\\
  F^-(q)
  &= - \big\langle\psi f,{\bf K}^{BM}(\cdot,q)\big\rangle +
    \big\langle(\psi\theta^+,{\bf K}^{BM}(\cdot,q)\big\rangle+\big\langle(1-\psi)\theta^-,{\bf K}^{BM}(\cdot,q)\big\rangle
\end{aligned}
$$
and consequently
$$
\begin{aligned}
  F^+(q)&=\phantom{-}\int_{{\rm b}V^+}\psi f{\bf K}^{BM}(\cdot,q)+u(q)\\
  F^-(q)&= - \int_{{\rm b}V^-} \psi f{\bf K}^{BM}(\cdot,q)+v(q)
\end{aligned}
$$
with $u=u(q)$, $v=v(q)$ smooth near $q_0$.
Now, as a consequence of the classical
potential theory~\cite{Mi}, $F^\pm$ are continuous up to
the boundary and this concludes the proof of the general case.
In the particular case when $S$ is the boundary of $\Omega$, the proof runs as
in the smooth case of Theorem~\ref{set25}.
\end{proof}

\begin{rem}\label{sept3}
Theorem~\ref{appl1} also generalizes.
\end{rem}

\begin{rem}\label{sept23}
A smooth function $f:S\to\HH$ is weakly admissible if and only if is admissible.
\end{rem}

\begin{proof}
First assume that $f$ is $C^\infty$ and weakly admissible.
By Remark~\ref{rmk:restrict-crf} the
functions $F^\pm$ of the previous Proposition are
smooth up to the boundary $S$, hence $f$ is admissible.

Next, assume $f$ is $C^\infty$ and admissible, then $f = F^+ - F^-$.
Since $F^\pm$ are smooth up to $S$, then $f$ is weakly admissible.
\end{proof}

Let $S$ be a connected smooth hypersurface in $\HH^2$.
We say that the \emph{analytic continuation principle} holds
for weakly admissible functions on $S$ when the following is
true: if $f:S\to\HH$ is a continuous
weakly admissible function which vanishes on an nonempty open set of $S$,
then $f\equiv 0$.

\begin{teo}\label{sept4b}
Let $S$ be a connected smooth hypersurface in $\HH^2$.
Then, analytic continuation principle holds for weakly admissible
functions on $S$ in the following two cases:
\begin{itemize}
\item[i)] $S$ is the boundary of a domain $\Omega\Subset \HH^2$ satisfying the
  hypothesis of Theorem~\ref{set25};
\item[ii)] $S$ is nondegenerate.
\end{itemize}
\end{teo}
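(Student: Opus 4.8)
The plan is to run, in the continuous/weak category, exactly the two-step argument used for the smooth case in Theorem~\ref{sept4a}, replacing the smooth extension results by their weak counterparts. The only genuinely new ingredient is Theorem~\ref{lu33}, which in the continuous setting plays the role that Theorem~\ref{set25} played for smooth data: it provides the $\HH$-holomorphic extension, continuous up to $S$, and characterizes local jumps as weakly admissible functions. Throughout, I will use that $\HH$-holomorphic functions are harmonic, hence real analytic, so the ordinary unique continuation principle is at our disposal.

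For case (i): let $f$ be a continuous weakly admissible function on $S = {\rm b}\,\Omega$ vanishing on a nonempty open subset of $S$. By the particular case of Theorem~\ref{lu33}, $f$ extends to an $\HH$-holomorphic $F$ on $\Omega$, continuous up to $S$. Put $Z = \{f = 0\}$, fix $q_0 \in \mathring Z$ and a neighborhood $U \Subset \mathring Z$, and construct, exactly as in the proof of Theorem~\ref{sept4a}(i), an enlarged domain $\Omega_1 \supset \Omega$ with connected smooth boundary satisfying the hypotheses of Theorem~\ref{set25}, such that ${\rm b}\,\Omega_1 \setminus {\rm b}\,\Omega \subset \HH^2 \setminus \overline\Omega$ and ${\rm b}\,\Omega \setminus U = {\rm b}\,\Omega_1 \cap {\rm b}\,\Omega$. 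Let $f_1$ be the function on ${\rm b}\,\Omega_1$ equal to $f$ on ${\rm b}\,\Omega_1 \cap {\rm b}\,\Omega$ and to $0$ elsewhere; since the two pieces are glued inside the open set where $f \equiv 0$, $f_1$ is continuous and still weakly admissible. By Theorem~\ref{lu33}, $f_1$ extends to an $\HH$-holomorphic $F_1$ on $\Omega_1$, continuous up to ${\rm b}\,\Omega_1$; the Bochner-Martinelli formula~\eqref{BM} forces $F_1$ to agree with $F$ on $\Omega$. Since $F_1$ vanishes on the boundary of $\Omega_1 \setminus \overline\Omega$ it vanishes on $\Omega_1 \setminus \overline\Omega$, and unique continuation then gives $F_1 \equiv 0$, hence $F \equiv 0$ and $f \equiv 0$.

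For case (ii): assume $S = \{\rho = 0\}$ is nondegenerate and $f$ continuous weakly admissible, not identically zero, and suppose for contradiction that $f$ vanishes on a nonempty open subset of $S$. Using the continuous counterpart of the one-sided extension across a nondegenerate hypersurface (the continuous generalization of Theorem~\ref{appl1} indicated in Remark~\ref{sept3}, itself the weak analogue of Theorem~6 of~\cite{Pe3}), $f$ extends to a one-sided neighborhood of $S$ in, say, $\{\rho \le 0\}$ by an $\HH$-holomorphic function $F$ continuous up to $S$. Then, for a generic $p \in S$, I would build a bounded domain $\Omega \subset \{\rho < 0\}$ whose boundary contains $p$, with ${\rm b}\,\Omega \cap Z$ having interior points in $S$ and satisfying the hypotheses of case (i). Applying case (i) to $F|_{{\rm b}\,\Omega}$ yields $F|_{{\rm b}\,\Omega} \equiv 0$, in particular $f(p) = 0$; as $p$ is arbitrary, $f \equiv 0$, the desired contradiction.

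The main obstacle will be verifying that the distributional-trace machinery of~\cite{LT} underlying Theorem~\ref{lu33} behaves correctly under the mere continuity hypothesis: specifically that the glued datum $f_1$ is genuinely weakly admissible (so that the defining identities~\eqref{33} survive the cut-off at ${\rm b}\,\Omega \setminus U$), and that the Bochner-Martinelli representation continues to identify $F_1$ with $F$ and to propagate the vanishing from the boundary to the interior. The nondegenerate case additionally requires that the continuous one-sided extension claimed in Remark~\ref{sept3} be available with boundary values taken in the distributional sense compatible with~\cite{LT}; once that is in place, the geometric construction of $\Omega$ is identical to the smooth situation.
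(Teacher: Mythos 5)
Your proposal is correct and follows exactly the route of the paper, whose entire proof reads: ``The proof is analogous to the one of Theorem~\ref{sept4a} using Theorem~\ref{lu33} instead of Theorem~\ref{set25}.'' You have simply unwound that one-line instruction --- running the enlarged-domain/gluing argument of case~(i) and the one-sided-extension argument of case~(ii) with the weak extension theorem in place of the smooth one --- and the obstacles you flag (weak admissibility of the glued datum, identification via Bochner--Martinelli, the continuous one-sided extension of Remark~\ref{sept3}) are glossed over at the same level of detail in the paper itself.
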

\begin{proof}
The proof is analogous to the one of Theorem~\ref{sept4a}
using Theorem~\ref{lu33} instead of Theorem~\ref{set25}.
\end{proof}

\begin{pro}\label{sept2}
Let $S=\{\rho=0\}$ be a smooth hypersurface in $\HH^2$,
$\nabla \rho \neq 0$ on $S$, and $\Omega^-=\{\rho<0\}$.
Assume that $\Omega^-$ is strongly Levi $\HH$-convex along $S$.
Then, every weakly admissible function $f:S\to\HH$ extends
to a neighborhood $U$ of $S$ in $S \cup \Omega^-$ by an
$\HH$-holomorphic function in $U$, continuous up to $S$.
\end{pro}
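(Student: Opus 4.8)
The plan is to transport the strategy of Theorem~\ref{appl1} to the continuous setting, carrying out the extension on one side only. First I would fix $q_0\in S$ and a small neighborhood $\Omega$ of $q_0$, and write $U^+=\{\rho>0\}\cap\Omega$, $U^-=\{\rho<0\}\cap\Omega=\Omega^-\cap\Omega$, so that $(S\cap\Omega,U^+,U^-)$ is a domain splitting. Since $f$ is weakly admissible, Theorem~\ref{lu33} gives, after shrinking $\Omega$, a local jump decomposition: there are $\HH$-holomorphic functions $F^+$ on $U^+$ and $F^-$ on $U^-$, both continuous up to $S$, with $f|_{S\cap\Omega}=F^+|_S-F^-|_S$. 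The function $F^-$ already lives on the side $\Omega^-$ into which we must extend; the real task is therefore to push $F^+$, which is defined on the opposite side $\{\rho>0\}$, across $S$.

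This is exactly where strong Levi $\HH$-convexity of $\Omega^-$ enters, and it is the heart of the argument. At $q_0$ the boundary of $\Omega^-$ is strongly Levi $\HH$-convex, so, regarded as part of the boundary of the outer region $\{\rho>0\}$ (that is, with respect to the outward normal of $\Omega^-$), its second fundamental form along the unique tangent right $\HH$-line is negative definite. This is precisely the configuration used in the proof of Theorem~\ref{appl1}: arguing as in the proof of Theorem~4 of~\cite{Pe3} and invoking the Kontinuit\"atssatz~\cite[Theorem~2]{Pe3}, for each such $q_0$ one produces a domain $\Delta_{q_0}\subset\{\rho>0\}$ on which every $\HH$-holomorphic function extends to a strictly larger domain $\widehat\Delta_{q_0}$ that contains $q_0$ and reaches into $\{\rho<0\}$. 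Applied to $F^+$, this yields an $\HH$-holomorphic extension $\widetilde F^+$ of $F^+$ to a full two-sided neighborhood of $q_0$; being $\HH$-holomorphic, $\widetilde F^+$ is real-analytic there, hence continuous up to $S$. I would then set $F:=\widetilde F^+-F^-$ on the part of this neighborhood lying in $\Omega^-$: it is $\HH$-holomorphic, continuous up to $S$, and its trace is $\widetilde F^+|_S-F^-|_S=F^+|_S-F^-|_S=f$.

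It remains to glue these local extensions into a single $\HH$-holomorphic function on a neighborhood $U$ of $S$ in $S\cup\Omega^-$. The point is that $F$ is independent of the chosen jump decomposition: as already observed, $F^+$ and $F^-$ are determined up to a function $G$ that is $\HH$-holomorphic on all of $\Omega$ (both sides), and replacing $(F^+,F^-)$ by $(F^++G,F^-+G)$ turns $\widetilde F^+$ into $\widetilde F^++G$ (since $G$ is its own $\HH$-holomorphic continuation), leaving $F=\widetilde F^+-F^-$ unchanged. Hence on the overlap of two local patches the two extensions coincide, and the local pieces assemble into a global $\HH$-holomorphic extension $F$ on a one-sided neighborhood $U\subset S\cup\Omega^-$ of $S$, continuous up to $S$ with $F|_S=f$. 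The continuity up to $S$ and the matching of traces are routine, given the continuity of $f$, the continuity of $F^-$ up to $S$ furnished by Theorem~\ref{lu33}, and the real-analyticity of $\widetilde F^+$.

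The hard part is the one-sided extension in the second step, namely that strong Levi $\HH$-convexity of $\Omega^-$ forces $\HH$-holomorphic functions defined on the outer side $\{\rho>0\}$ to cross $S$ into $\Omega^-$. This is the quaternionic analogue of the Hartogs--Lewy extension at a strongly pseudoconcave boundary point, and it is precisely the mechanism isolated in Theorem~4 of~\cite{Pe3} and already exploited in Theorem~\ref{appl1}; by contrast, the reduction to a local jump, the subtraction of $F^-$, and the gluing are comparatively formal.
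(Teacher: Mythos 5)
Your proof is correct and follows essentially the same route as the paper's: Theorem~\ref{lu33} gives the local jump $f=F^+|_S-F^-|_S$, and the strong Levi $\HH$-convexity of $\Omega^-$ — read from the side $\{\rho>0\}$ as negative definiteness of the second fundamental form on the tangent right $\HH$-line — lets the Kontinuit\"atssatz argument of Theorem~4 of~\cite{Pe3} (exactly as in Theorem~\ref{appl1}) push $F^+$ across $S$, so that $\widetilde F^+-F^-$ is the desired one-sided extension. The only minor divergence is the gluing step: the paper identifies the local extensions on overlaps via the analytic continuation principle (Theorem~\ref{sept4b}, applicable since such an $S$ is nondegenerate), whereas you argue via uniqueness of the jump decomposition up to a two-sided $\HH$-holomorphic function together with real-analytic continuation; both are adequate.
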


\begin{proof}
By Theorem~\ref{lu33}, using Kontinuitätssatz
as in the proof of Theorem~4 of~\cite{Pe3},
for every point of $p\in S$ there exists a ball $B(p)$ such
that $f|_{B(p)\cap \{\rho = 0 \}}$ extends $\HH$-holomorphically
on $B(p) \cap \Omega^-$ by a function $F_p$.
This implies that there exists an open covering $B(p_j)$ of $S$
and $\HH$-holomorphic functions $F_j : B(p_j) \cap \Omega^- \to \HH$,
continuous up to $S$, such that $F_j$ and $f$ agree on $B(p_j) \cap S$.
By construction, $F_j = F_k$ on the
intersection $B(p_j) \cap B(p_k) \cap S$,
hence, by the analytic continuation principle, $F_j = F_k$
on $B(p_j) \cap B(p_k) \cap \Omega^-$.  Thus the functions $\{F_k\}$ defines
the required extension of $f$.
\end{proof}

\section{Appendix: Some generalizations to octonions}

We sketch some generalizations of our results to octonian regular functions.
We denote by $\ee_0=1$ the real unit and by $\ee_1,\dots,\ee_7$ the imaginary units of the division
algebra of the octonions $\OO$.  Thus, every element $p$ of $\OO$ can
be written in the form
\[
  p = \sum_{\alpha = 0}^7 x_\alpha \ee_\alpha
  \qquad\text{with $x_\alpha\in \RR$.}
\]
As usual, we set $\RE(p)=x_0$, $\IM(p) = \sum_{\alpha = 1}^7 x_\alpha \ee_\alpha$ and $\bar p = \RE(p) - \IM(p)$.
We recall that the product of octonions is noncommutative and nonassociative.

Let $U$ be an open set in $\OO$ and $u:U\to \OO$ a
smooth function. The Cauchy-Riemann-Fueter
operator $\partial_{\bar p}$ acts on $u$ in the following way:
\[
  \partial_{\bar p}u =
  \sum_{\alpha = 0}^7 \ee_\alpha\frac{\partial u}{\partial x_\alpha} =
  \left(\sum_{\alpha = 0}^7 \ee_\alpha\partial_{x_\alpha}\right) u.
\]
We say that $u$ is \emph{(left) $\OO$-holomorphic} in $U$ if $\partial_{\bar p}u = 0$ on $U$.
We also consider the conjugate operator
\[
  \partial_p u =
  \overline{\partial_{\bar p}}u =
  \sum_{\alpha = 0}^7 \bar\ee_\alpha\frac{\partial u}{\partial x_\alpha} =
  \left(\sum_{\alpha = 0}^7 \bar \ee_\alpha\partial_{x_\alpha}\right) u.
\]
In the case of several octonian variables $p_1,\dots,p_n$, we set
\begin{equation}
  \label{eq:octonion}
  p_h = \sum_{\alpha= 0}^7 x_{h,\alpha} \ee_\alpha,
  \qquad\text{with $x_{h,\alpha} \in \RR$,}
\end{equation}
and given an open subset $U$ of $\OO^n$, we consider the set $\calE^r(U)$ of the
smooth maps $U\to\OO^r$.

Let us consider a function $u \in \calE^1(U)$, $u = u(p_1,\dots,p_n)$.
We define the operator
\begin{equation}
  \label{eq:3}
  \overline{\mathfrak{D}} u = 
  \left(\partial_{\bar p_1} u,\dots, \partial_{\bar p_n} u\right).
\end{equation}
We have $\overline{\mathfrak{D}} u \in \calE^n(U)$.  The kernel of the operator
$\overline{\mathfrak{D}}$ consists of the \emph{(left) $\OO$-holomorphic functions} in the sense of
Fueter.

For some of the basic results in octonian analysis, we refer to
\cite{DS,li-peng-2002,Wang2014}. 

Lef $f=(f_1,\dots,f_n)\in\calE^n(U)$.
The \emph{non-homogeneous Cauchy-Riemann-Fueter problem} asks for
the existence of a solution of
\begin{equation}
  \label{eq:crf}
  \overline{\mathfrak{D}} u = f
\end{equation}
that is
\begin{equation}
  \label{eq:crf'}
  \partial_{\bar p_h} u = f_h
\end{equation}
for $h=1,\dots n$.

In this Appendix, we aim to study conditions on
$U\subseteq\OO^2$ and $f=(f_1,f_2)$
which guarantee the existence of a solution $u\in\calE^1(U)$
of~\eqref{eq:crf}.
In other words, to characterize the image of the
operator $\overline{\mathfrak{D}}$ for $n = 2$.

We start by looking at the necessary conditions on the datum $f$ for
arbitrary $n$.  Let us recall that
$\partial_{p_m}\partial_{\bar p_m} =
 \partial_{\bar p_m}\partial_{p_m} = \Delta_{p_m}$
is the laplacian with respect to the real coordinates of the
octonian variable $p_m$.
If the system of Cauchy-Riemann-Fueter~\eqref{eq:crf} has a solution,
the datum $f$ must satisfy the equations
\begin{equation}
  \label{eq:compat3}
  \Delta_{p_m} f_l
  = \partial_{\bar p_l}(\partial_{p_m} f_m)
\end{equation}
for $l, m = 1,\dots,n$.
Indeed, if $u$ is a solution of~\eqref{eq:crf},
i.e,~$\partial_{\bar p_i} u = f_i$, $i=1,\dots n$.
Then
\begin{equation*}
  \Delta_{p_m} f_l =
  \Delta_{p_m} (\partial_{\bar p_l} u) =
  \partial_{\bar p_l} (\Delta_{p_m} u) =
  \partial_{\bar p_l} (\partial_{p_m}(\partial_{\bar p_m} u)) =
  \partial_{\bar p_l}(\partial_{p_m} f_m).
\end{equation*}
Wang and Ren proved in~\cite{Wang2014} that such conditions are
actually sufficient when the data $f_1,\dots, f_n$ have a compact support.
For the sufficience in the general case we follow the method of
Ehrenpreis~\cite{Eh}.

Once written the system~\eqref{eq:crf} in the form
$\overline{D}u = f$, where $\overline{D}$ is the real matrix of
differential operator $\overline{\mathfrak{D}}$, the problem reduces to find the
generators of the module of relations of the rows of
$\overline{D}$.

In real coordinates, given the function
\[
  u(p) = \sum_{\alpha = 0}^7 u_\alpha(p) \ee_\alpha = (u_0(p),\dots,u_7(p)),
\]
the Cauchy-Riemann-Fueter operator (in one variable) takes the form
\begin{equation}
  \label{eq:2}
  \overline{\mathfrak{D}} u =
  \overline D
  \begin{bmatrix}
    u_0 \\ \vdots \\ u_7
  \end{bmatrix}
  =
  \begin{bmatrix}
    \partial_{x_0} &          - \partial_{x_1} &          - \partial_{x_2} &          - \partial_{x_3} &          - \partial_{x_4} &          - \partial_{x_5} &          - \partial_{x_6} &          - \partial_{x_7} \\
    \partial_{x_1} & \phantom{-}\partial_{x_0} &          - \partial_{x_3} & \phantom{-}\partial_{x_2} &          - \partial_{x_5} & \phantom{-}\partial_{x_4} & \phantom{-}\partial_{x_7} &          - \partial_{x_6} \\
    \partial_{x_2} & \phantom{-}\partial_{x_3} & \phantom{-}\partial_{x_0} &          - \partial_{x_1} &          - \partial_{x_6} &          - \partial_{x_7} & \phantom{-}\partial_{x_4} & \phantom{-}\partial_{x_5} \\
    \partial_{x_3} &          - \partial_{x_2} & \phantom{-}\partial_{x_1} & \phantom{-}\partial_{x_0} &          - \partial_{x_7} & \phantom{-}\partial_{x_6} &          - \partial_{x_5} & \phantom{-}\partial_{x_4} \\
    \partial_{x_4} & \phantom{-}\partial_{x_5} & \phantom{-}\partial_{x_6} & \phantom{-}\partial_{x_7} & \phantom{-}\partial_{x_0} &          - \partial_{x_1} &          - \partial_{x_2} &          - \partial_{x_3} \\
    \partial_{x_5} &          - \partial_{x_4} & \phantom{-}\partial_{x_7} &          - \partial_{x_6} & \phantom{-}\partial_{x_1} & \phantom{-}\partial_{x_0} & \phantom{-}\partial_{x_3} &          - \partial_{x_2} \\
    \partial_{x_6} &          - \partial_{x_7} &          - \partial_{x_4} & \phantom{-}\partial_{x_5} & \phantom{-}\partial_{x_2} &          - \partial_{x_3} & \phantom{-}\partial_{x_0} & \phantom{-}\partial_{x_1} \\
    \partial_{x_7} & \phantom{-}\partial_{x_6} &          - \partial_{x_5} &          - \partial_{x_4} & \phantom{-}\partial_{x_3} & \phantom{-}\partial_{x_2} &          - \partial_{x_1} & \phantom{-}\partial_{x_0}
  \end{bmatrix}
  \begin{bmatrix}
    u_0 \\ u_1 \\ u_2 \\ u_3 \\ u_4 \\ u_5 \\ u_6 \\ u_7
  \end{bmatrix}
\end{equation}
(see also \cite{li-peng-2002}).
Analogously, in the multivariate case, the Cauchy-Riemann-Fueter operator
can be written in real components as
\[
  \overline{\mathfrak{D}} u =
  \overline D
  \begin{bmatrix}
    u_0 \\ \vdots \\ u_7
  \end{bmatrix},
\]
where
\begin{equation}
  \label{eq:matriceD}
  \overline D =
  \begin{bmatrix}
    \overline D_{p_1} \\
    \vdots       \\
    \overline D_{p_n}
  \end{bmatrix}
\end{equation}
is a $8n\times 8$ matrix with entries in the polynomial ring with $8n$
indeterminates
\[
  R_n =
  \RR[\partial_{x_{1,0}}, \dots, \partial_{x_{1,7}}, \dots,
      \partial_{x_{n,0}}, \dots, \partial_{x_{n,7}}],
\]
and $\overline D_{p_i}$ denotes the matrix $\overline D$ relative to the variable $p_i$.

We denote by $\Syz$ the module of syzygies of the rows of the
matrix $\overline D$, which is a graded module with grading inherited
by the polynomial ring $R_n$.
By taking the real components, we get eight such real syzygies from each
one of the octonian conditions~\eqref{eq:compat3}.

\begin{pro}
  \label{prop:condizioni-indipendenti}
  The $n(n-1)$ conditions $\Delta_{p_m} f_l = \partial_{\bar p_l}(\partial_{p_m} f_m)$,
  of~\eqref{eq:compat3} for $l,m=1,\dots n$, $l\neq m$, give $8n(n-1)$
  real quadratic relations.  These relations corresponds to linearly
  independent elements over $\RR$ in $\Syz$.
\end{pro}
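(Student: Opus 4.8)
The plan is to make each compatibility condition~\eqref{eq:compat3} with $l\neq m$ explicit as a vector of elements of $R_n$, read off its block structure over the $8n$ rows of $\overline D$, and then separate the monomials that occur into two disjoint families so as to force every coefficient of a vanishing real combination to be zero. I would first pass to real coordinates: writing $U=(u_0,\dots,u_7)^{\top}$ and letting $F_h\in R_n^{8}$ be the column of the eight real components of $\partial_{\bar p_h}u$, one has $F_h=\overline D_{p_h}U$. Since the matrix of left multiplication by $\bar\ee_\alpha$ is the transpose of the one by $\ee_\alpha$, the operator $\partial_{p_m}$ is represented by $\overline D_{p_m}^{\top}$, and the identity $\partial_{p_m}\partial_{\bar p_m}=\Delta_{p_m}$ recalled above reads $\overline D_{p_m}^{\top}\overline D_{p_m}=\Delta_{p_m}I_8$. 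Hence, for $l\neq m$, condition~\eqref{eq:compat3} becomes the $\RR^8$-valued identity
\[
  \Delta_{p_m}F_l-\overline D_{p_l}\,\overline D_{p_m}^{\top}F_m=0 ,
\]
whose left-hand side vanishes identically once $F_l=\overline D_{p_l}U$ and $F_m=\overline D_{p_m}U$ are substituted (because $\Delta_{p_m}$ is central and $\overline D_{p_m}^{\top}\overline D_{p_m}=\Delta_{p_m}I_8$); this confirms that each of its eight rows lies in $\Syz$. For every ordered pair $(l,m)$ with $l\neq m$ I thus obtain eight syzygies $s_{lm}^{(0)},\dots,s_{lm}^{(7)}$, the $\delta$-th being the $\delta$-th row of the matrix relation, for a total of $8n(n-1)$ quadratic relations.

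Next I would record the block structure. Indexing the $8n$ coordinates of a syzygy by pairs $(h,\gamma)$ (variable $p_h$, real component $\gamma$), the syzygy $s_{lm}^{(\delta)}$ has, in the block $h=l$, the single nonzero entry $\Delta_{p_m}$ at position $(l,\delta)$, and in the block $h=m$ the row $-\big(\overline D_{p_l}\,\overline D_{p_m}^{\top}\big)_{\delta,\bullet}$, all other blocks being zero. The decisive point is that $\Delta_{p_m}=\sum_{\alpha}\partial_{x_{m,\alpha}}^2$ involves only the single variable $p_m$, whereas every entry of $\overline D_{p_l}\,\overline D_{p_m}^{\top}$ is a product $\partial_{x_{l,\alpha}}\partial_{x_{m,\beta}}$ mixing the two distinct variables $p_l$ and $p_m$.

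To prove $\RR$-independence I would assume $\sum_{l\neq m}\sum_{\delta}c_{lm\delta}\,s_{lm}^{(\delta)}=0$ with $c_{lm\delta}\in\RR$ and inspect the coordinate $(l_0,\delta_0)$. Its contributions split as $\sum_{m\neq l_0}c_{l_0 m\delta_0}\,\Delta_{p_m}$ (from the $l$-blocks, which force $l=l_0$ and $\delta=\delta_0$) together with a combination of entries of $\overline D_{p_l}\,\overline D_{p_{l_0}}^{\top}$ with $l\neq l_0$ (from the $m$-blocks, which force $m=l_0$). The first sum contains no $p_{l_0}$-derivative, while every monomial of the second carries a $p_{l_0}$-factor; as these two monomial families are disjoint, the first sum must vanish on its own. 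Since the Laplacians $\Delta_{p_m}$ for distinct $m$ involve pairwise disjoint sets of variables, they are $\RR$-linearly independent, whence $c_{l_0 m\delta_0}=0$ for every $m\neq l_0$. Letting $l_0$ and $\delta_0$ range over all values yields $c_{lm\delta}=0$ for all admissible $(l,m,\delta)$, which is the assertion.

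I expect the main obstacle to be organizational rather than conceptual: keeping the conjugation/transpose bookkeeping correct in the nonassociative setting, so that $\partial_{p_m}$ corresponds to $\overline D_{p_m}^{\top}$ and $\overline D_{p_m}^{\top}\overline D_{p_m}=\Delta_{p_m}I_8$, and then checking carefully that the ``pure'' one-variable Laplacian monomials never coincide with the ``mixed'' two-variable monomials. It is precisely this disjointness that decouples the linear system and makes the independence fall out at once.
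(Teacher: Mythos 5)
Your proof is correct, but it takes a genuinely different route from the paper's. The paper argues by evaluation on test data: for each ordered pair $(a,b)$ with $a\neq b$ it feeds the data $g$ with $g_a=x_{b,0}^2$ and $g_k=0$ otherwise into all the operators $z_{l,m}(f)=\Delta_{p_m}f_l-\partial_{\bar p_l}(\partial_{p_m}f_m)$ and observes that $z_{l,m}(g)\neq 0$ exactly when $(l,m)=(a,b)$, so that $z_{a,b}$ cannot be a real linear combination of the others. You instead work directly inside $\Syz$: using $L_{\bar\ee_\alpha}=L_{\ee_\alpha}^{\top}$ to represent $\partial_{p_m}$ by $\overline D_{p_m}^{\top}$, you write out each syzygy explicitly — the single entry $\Delta_{p_m}$ at position $(l,\delta)$ and the mixed row $-\big(\overline D_{p_l}\overline D_{p_m}^{\top}\big)_{\delta,\bullet}$ in block $m$ — and then exploit the disjointness between the pure monomials $\partial_{x_{m,\alpha}}^2$ and the mixed monomials $\partial_{x_{l,\alpha}}\partial_{x_{l_0,\beta}}$ to decouple the coefficients. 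Both arguments are sound, and your bookkeeping in the nonassociative setting is handled correctly (the identities $L_{\ee_\alpha}^{\top}=L_{\bar\ee_\alpha}$ and $\overline D_{p_m}^{\top}\overline D_{p_m}=\Delta_{p_m}I_8$ hold in any composition algebra, and only compositions of linear maps, never reassociated octonion products, are used). Your version buys two things: it verifies en passant that the eight rows of each condition really are syzygies (via the substitution $F_h=\overline D_{p_h}U$), and it pins down all eight real components $\delta=0,\dots,7$ of each octonion condition at once — the paper's single test function $g_a=x_{b,0}^2$ per pair, as written, only detects the coefficient of the real component $\delta=0$, and one must vary the data (e.g.\ take $g_a=x_{b,0}^2\,\ee_\delta$) to capture the remaining ones. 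The paper's evaluation argument, in exchange, is shorter and avoids any explicit description of the matrix representation of octonion multiplication.
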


\begin{proof}
  We want to prove that the operators
  $z_{l,m}(f) = \Delta_{p_m} f_l - \partial_{\bar p_l}(\partial_{p_m} f_m)$
  for $l,m = 1,\dots n$, $l\neq m$ are linear independent on $\RR$.
  Given $a,b=1,\dots,n$, $a\neq b$, we will prove that
  $z_{a,b}$ is not a linear combination of the other $z_{l,m}$.
  Indeed, consider the test data $g = (g_1,\dots, g_n)$ where
  \begin{equation}
    g_k(p_1,\dots,p_n) =
    \begin{cases}
      x_{b,0}^2 & \text{for $k = a$}, \\
      0 & \text{otherwise}
    \end{cases}
  \end{equation}
  with notations as in~\eqref{eq:octonion}.
  Then, $z_{l,m}(g)$ is nonzero if and only if $(l,m) = (a,b)$.
\end{proof}

Now we focus on the case of $n=2$ octonian variables $p_1,p_2$.
Conditions~\eqref{eq:compat3} become
\begin{equation*}
  \begin{aligned}
    \Delta_{p_2} f_1 &= \partial_{\bar p_1}(\partial_{p_2} f_2), \\
    \Delta_{p_1} f_2 &= \partial_{\bar p_2}(\partial_{p_1} f_1).
    \end{aligned}
\end{equation*}
Using a computer program that calculates the generators and the Betti
numbers of a graded module, one checks directly
that the module $\Syz$ is generated in degree $2$,
and $\Syz_2$, its component of degree $2$,
has real dimension $16$.\footnote{We performed the mentioned computation,
as well as the one of Remark~4.4, using the commands \texttt{syz} and
\texttt{betti} of the computer algebra system Macaulay~2~\cite{M2}.}
From this we get
\begin{pro}
  \label{prop:basis2}
  For $n=2$ octonian variables, the conditions~\eqref{eq:compat3}
  correspond to $16$ real relations that form a basis of the module
  of syzygies $\Syz$ as a real vector space.
\end{pro}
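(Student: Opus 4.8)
The plan is to deduce the statement by combining Proposition~\ref{prop:condizioni-indipendenti} with the two computer-algebra facts recorded just before the proposition, namely that $\Syz$ is generated in degree~$2$ and that $\dim_\RR\Syz_2 = 16$. First I would note that, for $n=2$, the compatibility conditions~\eqref{eq:compat3} reduce to the two octonion identities
\[
  \Delta_{p_2} f_1 = \partial_{\bar p_1}(\partial_{p_2} f_2),\qquad
  \Delta_{p_1} f_2 = \partial_{\bar p_2}(\partial_{p_1} f_1).
\]
Every operator occurring here is homogeneous of order two in the indeterminates of $R_2$, so passing to real components produces $8+8=16$ real relations, all lying in the degree-$2$ graded piece $\Syz_2$.

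Next I would invoke Proposition~\ref{prop:condizioni-indipendenti}, which tells us that these $16$ relations are linearly independent over $\RR$ inside $\Syz$, hence inside $\Syz_2$. Since the computation gives $\dim_\RR\Syz_2 = 16$, sixteen linearly independent vectors must span $\Syz_2$; therefore they form an $\RR$-basis of $\Syz_2$. Finally, ``generated in degree~$2$'' means $\Syz = R_2\cdot\Syz_2$, and because $\Syz_2 = \operatorname{span}_\RR(\text{the 16 relations})$ we conclude that the $16$ relations generate $\Syz$ as an $R_2$-module; equivalently they form a minimal homogeneous generating system of the graded module $\Syz$. This is exactly the content asserted.

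The proof is therefore not analytic at all: it rests entirely on the two statements $\dim_\RR\Syz_2 = 16$ and ``$\Syz$ is generated in degree~$2$,'' which are the outputs of the \texttt{syz} and \texttt{betti} computations. The main obstacle to a machine-free proof would be to certify these two facts by hand---e.g.\ by producing explicitly a minimal free resolution of $\operatorname{coker}\overline D$, or at least by checking directly that every degree-$2$ syzygy of the rows of $\overline D$ is an $\RR$-combination of the $16$ exhibited ones and that no essentially new generator appears in degree~$3$. Given the size $16\times 8$ of $\overline D$ and the intricate sign pattern forced by the nonassociativity of $\OO$, this verification is purely mechanical but quite laborious, which is precisely why the computer algebra system is invoked.
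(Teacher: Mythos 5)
Your proposal is correct and follows exactly the paper's argument: the paper's own proof simply cites the computer verification (that $\Syz$ is generated in degree~$2$ with $\dim_\RR\Syz_2=16$) together with Proposition~\ref{prop:condizioni-indipendenti}, which is precisely the combination you spell out in more detail. Your closing remarks on what a machine-free certification would require go beyond the paper but do not change the argument.
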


\begin{proof}
  It follows immediately from the above computer verification and
  Proposition~\ref{prop:condizioni-indipendenti}.
\end{proof}

Proposition~\ref{prop:basis2} and Ehrenpreis'
  Theorem~\cite[Theorem~6.2, p.~176]{Eh} now imply
\begin{teo}
  \label{th:sufficient}
  Let $U\subset \OO^2$ be a convex domain and $f\in\calE^2(U)$.
  Then, the Cauchy-Riemann-Fueter Problem
  $\overline{\mathfrak{D}}u = f$
  has a solution $u\in\calE^1(U)$ if and only if $f$ satisfies
  conditions~\eqref{eq:compat3}.
\end{teo}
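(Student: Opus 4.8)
The plan is to prove the two implications separately. The necessity of~\eqref{eq:compat3} is already established above by the direct computation showing that $\partial_{\bar p_i}u=f_i$ forces $\Delta_{p_m}f_l=\partial_{\bar p_l}(\partial_{p_m}f_m)$; this uses neither the convexity of $U$ nor the restriction $n=2$, so it carries over verbatim. All the content is therefore in the reverse implication, which I would obtain by recognizing $\overline{\mathfrak{D}}u=f$ as an overdetermined constant-coefficient system and applying Ehrenpreis' fundamental principle.

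First I would pass to real coordinates and rewrite the system as $\overline D\,u=f$, where $\overline D$ is the $16\times 8$ matrix over $R_2$ displayed in~\eqref{eq:matriceD} for $n=2$, with $u$ a column of the eight real components of the unknown and $f$ a column of the sixteen real components of the datum. In this language Ehrenpreis' theorem~\cite[Theorem~6.2, p.~176]{Eh} asserts that, for $U$ convex, the image of $\overline D$ acting on smooth functions on $U$ coincides with the kernel of the compatibility operator $Q(D)$, where the rows of $Q$ form a generating set of the syzygy module $\Syz$ of the rows of $\overline D$. Thus $\overline{\mathfrak{D}}u=f$ is solvable precisely when $f$ is annihilated by every syzygy in $\Syz$, and the problem reduces to comparing this family of conditions with the explicit ones in~\eqref{eq:compat3}.

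This comparison is supplied by Proposition~\ref{prop:basis2}. The real relations read off from~\eqref{eq:compat3} (by the necessity computation, taking real components, each of the two octonionic conditions yields eight of them) are genuine syzygies of the rows of $\overline D$; by Proposition~\ref{prop:condizioni-indipendenti} these sixteen relations are linearly independent over $\RR$, while the computer verification gives that $\Syz$ is generated in degree $2$ with $\dim_\RR\Syz_2=16$. Hence the sixteen relations constitute a basis of $\Syz_2$ and, $\Syz$ being generated in degree~$2$, they generate $\Syz$ as an $R_2$-module. Consequently one may take the operator $Q(D)$ of Ehrenpreis' principle to be exactly the one expressing~\eqref{eq:compat3}, so that the condition $Q(D)f=0$ becomes literally~\eqref{eq:compat3}, and sufficiency follows.

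The main obstacle is precisely the step of certifying that~\eqref{eq:compat3} generates all of $\Syz$: a priori $\overline D$ could admit further independent syzygies, in higher degree or even in degree $2$, which would impose additional compatibility conditions and invalidate the stated characterization. What closes the gap is the combination of the degree bound and the dimension count $\dim_\RR\Syz_2=16$ from the symbolic computation with the independence of Proposition~\ref{prop:condizioni-indipendenti}; together they pin down $\Syz$ exactly and leave no room for extra conditions. It is at this single point that the hypothesis $n=2$ enters, through the specific Betti-number computation, while convexity of $U$ is used only to pass from the algebraic exactness of syzygies to the analytic solvability furnished by Ehrenpreis.
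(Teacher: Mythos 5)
Your proposal is correct and takes essentially the same approach as the paper: the paper's own proof is exactly the one-line deduction from Proposition~\ref{prop:basis2} together with Ehrenpreis' Theorem~\cite[Theorem~6.2, p.~176]{Eh}, and your write-up simply makes that deduction explicit (necessity via the commutation computation already given for general $n$, sufficiency by identifying the compatibility operator in Ehrenpreis' principle with the operator of~\eqref{eq:compat3}, justified because the sixteen real relations form a basis of $\Syz_2$ and $\Syz$ is generated in degree~$2$). You have also correctly located where $n=2$ and the convexity of $U$ enter, matching the paper's reasoning.
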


\begin{rem}
  We stress that conditions~\eqref{eq:compat3} do not generate
  the module of syzygies $\Syz$ for $n>2$.
  For $n=3$, this can be directly checked
  (employing a computer algebra system),
  hence conditions~\eqref{eq:compat3} are not sufficient to guarantee
  the existence of a solution to~\eqref{eq:crf}.
\end{rem}

As in the quaternionic case, we can introduce the notion
of admissible octonian function.
Thus, in view of Theorem~\ref{th:sufficient}, we
can run through the proofs of Theorems~\ref{RH} and \ref{set25} and we get:
\begin{teo}
  Let $\Omega \subset \OO^2$ be a domain.
  \begin{enumerate}
  \item If $\Omega$ is convex and $(S,U^+,U^-)$ is a domain splitting of $\Omega$,
    then every smooth admissible function $f:S\to \OO$ is a smooth jump.
  \item If $\Omega$ is bounded with connected smooth boundary ${\rm b}\,\Omega$,
    then every smooth admissible function $f:{\rm b}\,\Omega \to \OO$ extends
    to $\Omega$ by an $\OO$-holomorphic function, smooth up to ${\rm b}\,\Omega$.
  \end{enumerate}
\end{teo}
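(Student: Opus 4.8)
The plan is to transpose the machinery of Section~\ref{TEXI} to $\OO^2$, replacing the quaternionic solvability result of~\cite{ABLSS} by its octonionic counterpart, Theorem~\ref{th:sufficient}, and the quaternionic Hartogs theorem by the octonionic one. First I would carry over the definitions verbatim: writing $S=\{\rho=0\}$ with $\rho$ real and $\nabla\rho\neq0$, an $\OO$-valued $f$ on $S$ is \emph{CRF} if it admits a smooth extension $F$ with $\overline{\mathfrak{D}}F=\rho A+\overline{\mathfrak{D}}\rho\cdot B$, and \emph{admissible} if, in addition, the tangential derivatives $f_{(x_{h,\alpha})}$ ($h=1,2$, $\alpha=0,\dots,7$) are themselves CRF. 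Because $\partial_{x_{h,\alpha}}$ commutes with the constant-coefficient operator $\overline{\mathfrak{D}}$, the octonionic analogue of Proposition~\ref{ag11} holds by the same commutation argument: $f$ is admissible if and only if it has a smooth extension $F$ with $\overline{\mathfrak{D}}F=\rho^2 u$.

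The heart of part~(1) is the \emph{flat-extension} step, i.e.~the analogues of Propositions~\ref{ag19} and~\ref{ag24}: an admissible $f$ admits a smooth extension $G$ with $\overline{\mathfrak{D}}G$ flat on $S$. The relevant compatibility relations are precisely~\eqref{eq:compat3}; they hold because $\Delta_{p_m}=\partial_{p_m}\partial_{\bar p_m}=\partial_{\bar p_m}\partial_{p_m}$ and $\partial_{\bar p_l}$ commutes with $\Delta_{p_m}$, and they drive the order-by-order improvement of the extension in powers of $\rho$, Lemma~\ref{ag18} being used at each step. Granting this, the argument of Theorem~\ref{RH} goes through unchanged: set $\eta=-\overline{\mathfrak{D}}G$ on $U^+$, $\eta=0$ on $S$, and $\eta=\overline{\mathfrak{D}}G$ on $U^-$, which is smooth on $\Omega$ by flatness and satisfies~\eqref{eq:compat3} on the dense open set $U^+\cup U^-$, hence on all of $\Omega$ by continuity. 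As $\Omega$ is convex, Theorem~\ref{th:sufficient} produces $\psi$ with $\overline{\mathfrak{D}}\psi=\eta$, and then $F^\pm=(\psi\pm G)/2$ are $\OO$-holomorphic on $U^\pm$ with $F^+|_S-F^-|_S=f$.

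For part~(2) I would mimic the proof of Theorem~\ref{set25}. Since $\OO^2\cong\RR^{16}$ is convex and, under the stated hypotheses, $\OO^2\setminus\overline\Omega$ is connected with boundary ${\rm b}\,\Omega$, the triple $({\rm b}\,\Omega,\Omega,\OO^2\setminus\overline\Omega)$ is a domain splitting; part~(1) then gives $f=F^+|_S-F^-|_S$ with $F^\pm$ $\OO$-holomorphic. I would next invoke the octonionic Hartogs theorem to extend $F^-$ across the bounded hole $\overline\Omega$ to an $\OO$-holomorphic $\widehat F^-$ on all of $\OO^2$; this Hartogs phenomenon follows from the compact-support solvability of Wang and Ren~\cite{Wang2014} by the standard cut-off argument, exactly as Hartogs follows from solvability in the quaternionic case. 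Then $F^+-\widehat F^-$ is the sought $\OO$-holomorphic extension of $f$ to $\Omega$, smooth up to ${\rm b}\,\Omega$.

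The main obstacle is the flat-extension recursion, where the non-associativity of $\OO$ must be watched. In the quaternionic proof the passage from~\eqref{ag20} onward factors the datum as $\zeta_h=(\partial_{\bar p_h}\rho)\,g+\rho\gamma_h$, and the corresponding octonionic manipulation involves triple products $(\partial_{\bar p_h}\rho)\,\big((\partial_{p_s}\rho)\,\zeta_s\big)$ which need not associate when $h\neq s$. I would reorganize these computations so that only associative products are used, namely the scalar $(\partial_{p_h}\rho)(\partial_{\bar p_h}\rho)=|\nabla_{p_h}\rho|^2$ and products lying in the associative subalgebra generated by any two octonions; the fact that $\OO$ is alternative, together with the identification of~\eqref{eq:compat3} with the degree-$2$ generators of $\Syz$ (Proposition~\ref{prop:basis2}), is what guarantees that the recursion still closes.
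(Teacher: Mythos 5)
Your proposal is correct and follows essentially the same route as the paper, whose entire proof is the remark that, after introducing admissible octonian functions, one can ``run through'' the proofs of Theorems~\ref{RH} and~\ref{set25} with Theorem~\ref{th:sufficient} replacing the quaternionic solvability result of~\cite{ABLSS}. Your extra care --- deriving an octonionic Hartogs theorem from the compact-support solvability of~\cite{Wang2014}, and observing that the flat-extension recursion only ever involves binary products (plus alternativity for $\partial_{p_m}\partial_{\bar p_m}=\Delta_{p_m}$), so non-associativity causes no harm --- fills in details the paper leaves implicit.
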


\providecommand{\bysame}{\leavevmode\hbox to3em{\hrulefill}\thinspace}
\renewcommand{\MR}[1]{}
\providecommand{\MRhref}[2]{%
  \href{http://www.ams.org/mathscinet-getitem?mr=#1}{#2}
}
\providecommand{\href}[2]{#2}

\end{document}